\theoremstyle{plain}
\newtheorem{theorem}{Theorem}
\newtheorem{corollary}[theorem]{Corollary}
\newtheorem{lemma}[theorem]{Lemma}
\newtheorem{notation}[theorem]{Notation}
\newtheorem{proposition}[theorem]{Proposition}
\theoremstyle{definition}
\newtheorem{definition}[theorem]{Definition}
\theoremstyle{remark}
\theoremstyle{remark}
\newtheorem{remark}[theorem]{Remark}
\theoremstyle{remark}
\newcommand{\mathsc}[1]{\textnormal{\textsc{#1}}} 
\newcommand{\Xb}{X_{\mathsc{Ban}}}
\newcommand{\UFs}{U({F_{\mathsc{sing}}})}
\newcommand{\UFhat}{U({\widehat{F}_{\mathsc{sing}}})}
\newcommand{\Fsing}{{F_{\mathsc{sing}}}}
\newcommand{\Fhat}{{\widehat{F}_{\mathsc{sing}}}}
\newcommand{\Fnormwidehat}{\reallywidehat{\mathsc{Nrm}(\Fsing)}}
\newcommand{\GVban}{n^0_{\beta_{\mathbf{d}}}(\Xb)}
\newcommand{\naive}{\widetilde{n}^0_{\beta_{d_1,d_2}}(\Xb)}
\newcommand{\naivev}{\widetilde{n}^0_{\beta_{\mathbf{d}}}(\Xb)}
\newcommand{\Cban}{C_{\mathsc{Ban}}}
\newcommand{\Msing}{{M_{\mathsc{sg}}}}
\newcommand{\MsingT}{{M_{\mathsc{sg}}^T}}
\newcommand{\MsingTP}{{M_{\mathsc{sg}}^{\text{TP}}}}
\newcommand{\Mhat}{{\widehat{M}_{\mathsc{sg}}}}
\newcommand{\MhatTP}{{\widehat{M}_{\mathsc{sg}}^{\text{TP}}}}
\newcommand{\MU}{{M}_{\mathsc{usg}}^T}
\newcommand{\Waff}{{W}_{\mathsc{Aff}}}
\newcommand{\EEE}{\widetilde{\EE}}
\newcommand{\FFF}{\widetilde{\FF}}
\newcommand{\pr}{\textit{pr}}
\newcommand{\HHom}{\operatorname{\mathcal{H}\! \mathit{om}}}
\newcommand{\CC}{\mathbf{C} }
\newcommand{\PP}{\mathbf{P} }
\newcommand{\RR}{\mathbf{R} }
\newcommand{\ZZ}{\mathbf{Z}\,}
\newcommand{\CCC}{\mathcal{C} }
\newcommand{\DDD}{\mathcal{D} }
\newcommand{\EE}{\mathcal{E} }
\newcommand{\FF}{\mathcal{F} }
\newcommand{\GG}{\mathcal{G} }
\newcommand{\II}{\mathcal{I}\,}
\newcommand{\LL}{\mathcal{L}\,}
\newcommand{\NNN}{\mathcal{N}}
\newcommand{\OO}{\mathcal{O} }
\newcommand{\Coh}{\operatorname{Coh}}
\newcommand{\Ext}{\operatorname{Ext}}
\newcommand{\HH}{\operatorname{H}}
\DeclareMathOperator{\Ker}{Ker}
\newcommand{\Pic}{\operatorname{Pic}}
\newcommand{\Hom}{\operatorname{Hom}}
\newcommand{\Def}{\operatorname{Def}}
\newcommand{\Bl}{\operatorname{Bl}}
\newcommand{\Sym}{\operatorname{Sym}}
\newcommand{\Supp}{\operatorname{Supp}}
\newcommand{\OPD}{\operatorname{OPD}}
\newcommand{\ODOP}{\operatorname{ODOP}}
\newcommand{\Spec}{\operatorname{Spec}}
\newcommand{\Tot}{\operatorname{Tot}}
\newcommand{\wec}{Behrend function weighted Euler characteristic}
\newcommand*\bigcdot{\mathpalette\bigcdot@{.5}}
\newcommand*\bigcdot@[2]{\mathbin{\vcenter{\hbox{\scalebox{#2}{$\m@th#1\bullet$}}}}}
\newcommand\sbullet[1][.5]{\mathbin{\vcenter{\hbox{\scalebox{#1}{$\bullet$}}}}}
\newcounter{sarrow} 
\newcommand\reallywidehat[1]{%
\savestack{\tmpbox}{\stretchto{%
  \scaleto{%
    \scalerel*[\widthof{\ensuremath{#1}}]{\kern.1pt\mathchar"0362\kern.1pt}%
    {\rule{0ex}{\textheight}}
  }{\textheight}%
}{2.4ex}}%
\stackon[-6.9pt]{#1}{\tmpbox}%
}
\title[GV invariants of the Banana manifold]{Genus Zero Gopakumar-Vafa invariants of the Banana manifold}
\begin{document}

\date{\today}

\author{Nina Morishige}
\address{Nina Morishige, Department of Mathematics, The University of British Columbia, Vancouver, BC, V6T 1Z2 Canada}%
\email{nina@math.ubc.ca}%

\begin{abstract}
The Banana manifold $\Xb$ is a compact Calabi-Yau threefold constructed as the conifold resolution of the fiber product of a generic rational elliptic surface with itself, first studied in \cite{bryan19}. We compute Katz's genus 0 Gopakumar-Vafa invariants \cite{katz08} of fiber curve classes on the Banana manifold $\Xb\to \PP^1$. The weak Jacobi form of weight -2 and index 1 is the associated generating function for these genus 0 Gopakumar-Vafa invariants. The invariants are shown to be an actual count of structure sheaves of certain possibly nonreduced genus 0 curves on the universal cover of the singular fibers of $\Xb\to\PP^1$. 
\end{abstract}

\maketitle
\section{Introduction}\label{sec: intro}

\subsection{Background}

The genus zero Gopakumar-Vafa invariants are integer valued deformation invariants of Calabi-Yau threefolds that appeared in physics as a virtual count of rational curves on $X$ \cite{gopakumar-vafa98}. 

Mathematically Katz defined the genus 0 Gopakumar-Vafa invariants as follows \cite{katz08}.
\begin{definition} \label{def:MXbeta}
Let $X$ be a projective Calabi-Yau threefold over $\CC$, together with a fixed curve class $\beta\in H_2(X)$. By a Calabi-Yau threefold $X$, we mean a smooth threefold with trivial canonical bundle $K_X\cong \OO_X$. 
We define $M^X_{\beta}$ to be the moduli space of Simpson semistable \cite{simpson94} pure 1-dimensional sheaves $\FF$ on $X$ with $\text{ch}_2(\FF) = \beta^{\vee}$ and $\chi(\FF) = 1$.
\end{definition} 
\begin{definition} 
The genus 0 Gopakumar-Vafa (GV) invariants $n^0_{\beta}(X)$ of $X$ in curve class $\beta$ are defined as the Behrend function weighted Euler characteristics of this moduli space:
\begin{equation}
n^0_{\beta}(X) = e(M^X_{\beta},\nu) \coloneqq \sum_{k\in\ZZ}{k\cdot e_{\textsl{top}}(\nu^{-1}(k))}
\label{def:GV}
\end{equation}
where $e_{\textsl{top}}$ is topological Euler characteristic and $\nu:M^X_{\beta}\rightarrow \ZZ$ is Behrend's constructible function \cite{behrend09}.
\end{definition}

\begin{remark}
The moduli space $M^X_{\beta}$ contains no strictly semi-stable sheaves, and so the moduli space is a projective scheme. (See Lemma \ref{lemma: nosemistable-euler}).
\end{remark}

\begin{remark}
The stability condition is equivalent to a condition on the Euler characteristic, namely, that a coherent sheaf $E\in M^X_{\beta}$ is stable if and only if any subsheaf $E'\subset E$ has nonpositive Euler characteristic $\chi(E')\leq 0$. This makes the moduli space manifestly independent of the choice of an ample class. (See Lemma \ref{lemma: nosemistable-euler}).
\end{remark}
More recently, an interpretation of all genus GV invariants $n^g_{\beta}(X), g\geq 0$, in terms of a sheaf of vanishing cycles on $M^X_{\beta}$ is given in \cite{maulik-toda18}. In the case of genus 0 invariants, this reduces to the previous definition. Toda  \cite[Thm 6.9]{toda12} has also shown that the genus 0 GV invariants can be extracted from the usual Donaldson-Thomas (DT) partition function. In particular when $X$ satisfies the conjecture of Maulik, Nekrasov, Okounkov, and Pandharipande \cite{mnop1}, then the genus 0 GV invariants and genus 0 Gromov-Witten invariants $N^0_\beta(X)$ satisfy the relation:
\begin{equation}
\label{eq:gw-relation}
N^0_\beta(X)=\sum_{k|\beta}{\frac{n^0_{\beta/k}(X)}{k^3}}.
\end{equation}
In practice, these GV invariants can be hard to compute, particularly when $X$ is compact, and have been computed explicitly in very few cases.

In this paper, we directly compute genus 0 GV invariants of certain fiber cohomology classes of curves on a compact Calabi-Yau threefold $X=\Xb$, see Theorem~\ref{th:main}. The result we obtain agrees with the above predictions using the DT invariants for this threefold recently computed in \cite{bryan19}. The generating function for the invariants is given by a Jacobi form.

\subsection{Definition of the Banana manifold $\Xb$}

Let $S$ be a generic rational elliptic surface. We view $S\subset \PP^1 \times \PP^2$ as a generic hypersurface of degree $(1,3)$. Then $S\rightarrow \PP^1$ is an elliptic fibration with 12 singular nodal fibers. The fiber product $S\times_{\PP^1} S$ is a singular threefold which has 12 conifold singularities. We describe the construction of the Banana manifold $\Xb$, and refer the reader to \cite{bryan19} for more details.

\begin{definition}\label{def:Xban} Given $S$ as above, we define the Banana manifold $\Xb$ to be
\[
\Xb = \Bl_{\Delta}(S\times_{\PP^1} S),
\]
the conifold resolution of the fiber product $S\times_{\PP^1} S$ given by blowing up along the diagonal $ \Delta\subset S\times_{\PP^1} S$.
 
The Banana manifold is a smooth compact Calabi-Yau threefold that has the structure of an Abelian surface fibration $\pi:\Xb\to \PP^1$ with exactly 12 singular fibers which are each isomorphic to a surface we call $\Fsing$. The surface $\Fsing$ is $\PP^1 \times \PP^1$ blown up at two points and glued along opposite edges,
\begin{equation}
\Fsing \cong \Bl_{\Delta}(\text{nodal curve }\times_{\PP^1} \text{ nodal curve})\subset \Xb.
\label{eq:fsing}
\end{equation}

Each singular fiber $\Fsing$ contains a curve that we call a Banana configuration, or Banana curve $\Cban$, Figure \ref{figure:bananacurve}. The Banana curve is a union of 3 rational curves intersecting in two points:
\begin{equation}
\Cban = C_1 \cup C_2 \cup C_3, \qquad C_i\cong \PP^1, \qquad C_i \cap C_j = \{p,q\}
\label{eq:cban}
\end{equation}
\[
N_{C_i/\Xb} = \OO(-1)\oplus \OO(-1).
\]
\begin{figure}[th!]
  \centering
	\scalebox{.5}{\includegraphics{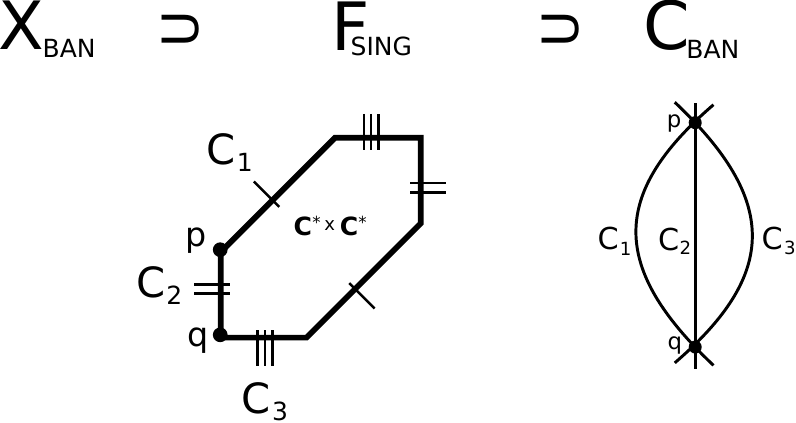}}
  \caption{A singular fiber $\Fsing$ containing the eponymous Banana curve $\Cban$.}
	\label{figure:bananacurve}
\end{figure}
The rational components $C_1$ and $C_2$ are the proper transforms of the nodal curves on respectively the first and second rational elliptic surfaces $S$ in the fiber product $S\times_{\PP^1} S$, while $C_3$ is the exceptional curve from the conifold resolution.

The singular locus of the map $\pi:\Xb\to \PP^1$ is the disjoint union of the twelve copies of $\Cban$, each of which lies on one of the twelve singular fibers isomorphic to $\Fsing$ of $\Xb$. We denote this collection of singular curves as
\[
\amalg\,\Cban\coloneqq \amalg_{i=1}^{12} (\Cban)_i,
\]
and the twelve singular fibers as 
\[
\amalg\,\Fsing\coloneqq \amalg_{i=1}^{12} (\Fsing)_i.
\]

The geometry of the fibration $\pi:\Xb\to \PP^1$ gives a group scheme structure to its smooth locus, which we call $\Xb^0$:
\begin{equation}
\Xb^0=\Xb \backslash \amalg\Cban \to \PP^1.
\label{eq:xbansmooth}
\end{equation}
This action extends to an action of $\Xb^0\to\PP^1$ on all of $\Xb\to\PP^1$, see \cite[\textsection 4.5]{bryan19}

Moreover, let $\Theta \subset H_2(\Xb,\ZZ)$ be the sublattice of fibers classes, namely classes represented by cocycles supported on a fiber. Then $\Theta$
 is spanned by $[C_1]$, $[C_2]$, and $[C_3]$: $$\Theta = \ZZ[C_1]\oplus \ZZ[C_2]\oplus \ZZ[C_3].$$
\end{definition}

\subsection{Main Result} 
Our main result is the following:
\begin{theorem} Let $\Xb$ be as above. Fix a curve class $\beta_{\mathbf{d}}$,
\[
\beta_{\mathbf{d}} = d_1[C_1]+d_2[C_2] + [C_3]\in H_2(\Xb), \qquad \mathbf{d}=(d_1, d_2)\in\ZZ^2_{\geq0}.
\]
The genus 0 Gopakumar-Vafa invariants $\GVban$ are determined by the following equation:
\begin{equation}
\label{eq:main}
\sum_{d_1, d_2} {\GVban x^{d_1}y^{d_2}} = 12 \prod_{m=1}^{\infty}{\frac{(1-x^my^{m-1})^2 (1-x^{m-1}y^{m})^2}{(1-x^my^m)^4}}.
\end{equation}
\label{th:main}
\end{theorem}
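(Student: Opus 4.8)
The plan is to reduce the computation of $n^0_{\beta_{\mathbf{d}}}(\Xb)$ to a local computation on the twelve singular fibers $\amalg\,\Fsing$, and then to enumerate the relevant sheaves explicitly. The first step is to observe that since $\ch_2(\FF)=\beta_{\mathbf{d}}^\vee$ is a fiber class and the component $[C_3]$ appears with multiplicity exactly one, any semistable sheaf $\FF\in M^{\Xb}_{\beta_{\mathbf{d}}}$ must be scheme-theoretically supported on a fiber of $\pi$, and in fact — because $[C_3]$ is the exceptional class of the conifold resolution and does not appear in any smooth fiber — its support must lie in one of the twelve singular fibers $(\Fsing)_i$. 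Using the group scheme action of $\Xb^0\to\PP^1$ on $\Xb\to\PP^1$, translation identifies the contributions of the twelve fibers, so $n^0_{\beta_{\mathbf{d}}}(\Xb) = 12\cdot e\big(M,\nu\big)$ where $M$ is the (open and closed) locus of sheaves supported on a single fixed $\Fsing$. This accounts for the overall factor $12$ in \eqref{eq:main}.

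The second step is to analyze $M$ locally. The threefold $\Xb$ near $\Fsing$ looks like (a quotient/gluing of) the total space of $\OO(-1)\oplus\OO(-1)$ over the banana curve $\Cban$, and the Behrend function on such a space is controlled by the local model: for $\OO(-1)\oplus\OO(-1)$-curves the weighted Euler characteristic contribution of a length-structure is a sign, essentially $(-1)^{\dim}$, computable from the critical-locus/vanishing-cycle description as in \cite{maulik-toda18}. I would first handle the case $d_1=d_2=0$ (class $[C_3]$ alone): here the only stable sheaf is $\OO_{C_3}$, which is rigid with the expected $(-1)$-sign, giving the constant term $12$ — consistent with the product in \eqref{eq:main} evaluating to $12$ at $x=y=0$. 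Then for general $\mathbf{d}$ I would pass to the universal cover $U(\Fsing)$ of $\Fsing$ (unwinding the gluing of $\PP^1\times\PP^1$ blown up at two points along opposite edges), since sheaves on $\Fsing$ in these classes lift, and on the cover the banana curve becomes an infinite chain; stability ($\chi(E')\le 0$ for all subsheaves) forces the support to be a connected, explicitly describable possibly-nonreduced configuration of rational curves, and the classification becomes a combinatorial problem about which multiplicity vectors along the chain are allowed.

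The third step is the enumeration itself: I would show that each stable sheaf is the structure sheaf (up to the choice of a point in a connected rational moduli, contributing Euler characteristic $1$ or, when the moduli is a single point, again $1$) of one of these curves, so that $M$ is a disjoint union of pieces each with Behrend-weighted Euler characteristic $\pm1$, and the generating function $\sum \pm 1\cdot x^{d_1}y^{d_2}$ is exactly the infinite product on the right of \eqref{eq:main}. Concretely, the factor $\prod_m (1-x^my^m)^{-4}$ should come from configurations built out of the "square" classes $m[C_1]+m[C_2]+[C_3]$ together with four-dimensional choices (two gluing/framing parameters at each of the two nodes $p,q$), while the numerator factors $(1-x^my^{m-1})^2(1-x^{m-1}y^m)^2$ record the "off-diagonal" rigid configurations with their signs; matching these requires carefully tracking the Behrend sign against the parity of the dimension of each stratum. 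The main obstacle I anticipate is precisely this last bookkeeping: proving that the Behrend function is constant on each stratum with the claimed sign, and that the strata (indexed by admissible multiplicity data on the infinite chain) are enumerated by the coefficients of the weak Jacobi form — in other words, turning the local $\OO(-1)\oplus\OO(-1)$ geometry plus the gluing data of $\Fsing$ into the exact combinatorial generating function, rather than merely its leading behavior.
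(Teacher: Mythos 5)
Your proposal has the right overall shape---restrict to the singular fibers, pass to the universal cover, enumerate supports combinatorially, and track Behrend signs---but several of the steps as you state them either fail or are missing the idea that makes them work. First, your justification for restricting to the twelve singular fibers is not the right one: it is not claimed (and not needed) that no sheaf in class $\beta_{\mathbf{d}}$ can be supported on a smooth fiber. The actual argument shows only that every stable sheaf is scheme-theoretically supported on a \emph{single} fiber (via tensoring the sheaf by $\OO_X(F)\cong\OO_X$ on its support and using stability), and then that the smooth fibers contribute \emph{zero} to $e(M,\nu)$ because each carries a transitive action of an abelian surface, whose orbits have vanishing Euler characteristic; the moduli space over smooth fibers need not be empty. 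Relatedly, the group scheme action is fiberwise, so it does not ``translate'' one singular fiber to another; the factor $12$ comes simply from the twelve singular fibers being isomorphic and disjoint. Second, your assertion that ``sheaves on $\Fsing$ in these classes lift'' to $\UFs$ is not automatic for $T$-fixed sheaves. The missing device is a second torus $P=\Pic^0(\Fsing)\cong\CC^*\times\CC^*$ acting by tensoring with degree-zero line bundles: one must localize again with respect to $P$, and only the $P$-fixed sheaves decompose into eigensheaves each of which pushes forward isomorphically from the universal cover. One must also check that this $P$-action preserves the symmetric obstruction theory, since otherwise the Behrend-weighted localization is not valid.

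Third, the claim that every stable sheaf is a structure sheaf requires a real argument, and your parenthetical about ``a connected rational moduli'' is not how it goes: the key lemma is that for any admissible support curve $\CCC$ and any closed subscheme $\DDD\subset\CCC$ one has $\chi(\OO_{\DDD})\geq 1$, proved by an explicit Euler-characteristic formula (normalization sequence plus the $\OO(-1)\oplus\OO(-1)$ local model written as a sum of squares); combined with stability ($\chi\leq 0$ for proper subsheaves) this forces $\FF\cong\OO_{\CCC}$ and, in the equality case, pins down exactly which multiple structures occur. Fourth, your proposed origin of the product formula is not correct: the fourth powers do not come from ``four framing parameters at the two nodes,'' but from the four branches into which the support curve on $\UFs$ decomposes at the two preimage vertices of $e_0$; each branch contributes the generating function for integer partitions with distinct odd parts, $\prod_{n}(1+x^{n}y^{n-1})/(1-x^{n}y^{n})$ (or with $x,y$ swapped), which is where both the numerator and denominator factors arise. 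Finally, the sign bookkeeping you correctly flag as the main obstacle is resolved by showing $\dim\Ext^1(\OO_{\CCC},\OO_{\CCC})\equiv\chi(\OO_{\CCC})+\sum_i d_{C_i}\pmod 2$ via the MNOP vertex calculation (after identifying $\Ext^1(\OO_{\CCC},\OO_{\CCC})\cong\Ext^1(\II_{\CCC},\II_{\CCC})$), giving the uniform sign $(-1)^{d_1+d_2}$ that converts each $(1+\,\cdot\,)$ into $(1-\,\cdot\,)$; a stratum-by-stratum vanishing-cycle analysis is not needed.
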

\begin{corollary}\label{cor:jacobi}
After the change of variables,
\[
q=xy,  \qquad p = y,
\]
the genus 0 GV invariants satisfy the identity:
\[
\sum_{d_1, d_2} {\GVban q^{d_1}p^{d_2-d_1-1} } = 12 \phi_{-2,1}(q,p),
\]
where $\phi_{-2,1}(q,p)$ is the unique weak Jacobi form of weight -2 and index 1:
\[
\phi_{-2,1}(q,p) = p^{-1}(1-p)^2\prod_{m=1}^{\infty}{\frac{(1-q^{m}p^{-1})^2 (1-q^{m}p)^2}{(1-q^m)^4}}.
\]
\[q=\exp(2\pi i\tau),  \qquad p = \exp(2\pi iz), \qquad  (\tau,z)\in \HH\times\CC.\]

\end{corollary}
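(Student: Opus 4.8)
The plan is to deduce Corollary~\ref{cor:jacobi} from Theorem~\ref{th:main} by a purely formal manipulation of power series, followed by recognizing the resulting infinite product as the standard generator of the (one-dimensional) space of weak Jacobi forms of weight $-2$ and index $1$. The starting point is the identity \eqref{eq:main}. Setting $x = q/p$ and $y = p$, one simplifies each factor appearing on the right-hand side: $x^m y^{m-1} = q^m p^{-1}$, $\;x^{m-1} y^m = q^{m-1} p$, and $x^m y^m = q^m$. Thus the right-hand side of \eqref{eq:main} becomes
\[
12 \prod_{m=1}^{\infty} \frac{(1-q^m p^{-1})^2 \,(1-q^{m-1}p)^2}{(1-q^m)^4}.
\]

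The only step that is not cosmetic is the reindexing of the product $\prod_{m\ge 1}(1-q^{m-1}p)^2$: splitting off the $m=1$ term produces the prefactor $(1-p)^2$, and the substitution $m \mapsto m-1$ turns the rest into $\prod_{m\ge 1}(1-q^m p)^2$. Hence the displayed expression equals
\[
12\,(1-p)^2 \prod_{m=1}^{\infty}\frac{(1-q^m p^{-1})^2(1-q^m p)^2}{(1-q^m)^4} \;=\; 12\, p\, \phi_{-2,1}(q,p),
\]
using the product formula for $\phi_{-2,1}$ recorded in the statement. On the other side, $x^{d_1} y^{d_2} = q^{d_1} p^{d_2 - d_1}$, so after dividing both sides by $p$ one obtains precisely $\sum_{d_1,d_2} \GVban\, q^{d_1} p^{d_2 - d_1 - 1} = 12\,\phi_{-2,1}(q,p)$, which is the asserted identity.

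It remains to justify the words ``the unique weak Jacobi form of weight $-2$ and index $1$.'' This is classical: the space $J_{-2,1}$ is one-dimensional (Eichler–Zagier), and the product on display coincides with $-\theta_1(\tau,z)^2/\eta(\tau)^6$ under $q = e^{2\pi i\tau}$, $p = e^{2\pi i z}$, whose $q$-expansion begins $(p^{-1} - 2 + p) + O(q)$; this fixes the standard normalization of the generator. I anticipate no genuine obstacle here — the argument is entirely bookkeeping in formal power series together with an appeal to the Eichler–Zagier classification — with the one point requiring care being the index shift that produces the $(1-p)^2$ prefactor and hence the overall factor of $p$ relating the two generating functions.
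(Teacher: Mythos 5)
Your proposal is correct and is exactly the computation the paper intends: Corollary~\ref{cor:jacobi} is stated as an immediate consequence of Theorem~\ref{th:main} via the substitution $x=q/p$, $y=p$, and your bookkeeping — in particular the reindexing of $\prod_{m\ge 1}(1-q^{m-1}p)^2$ that splits off the $(1-p)^2$ prefactor and accounts for the overall factor of $p$, i.e.\ the exponent $d_2-d_1-1$ — is the intended verification. The identification of the resulting product with the generator of the one-dimensional space $J_{-2,1}$ via $-\theta_1^2/\eta^6$ is likewise the standard fact the paper is appealing to.
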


In particular, this Jacobi form is one of the two generators of the ring of weak Jacobi forms. Furthermore, the index 1 weak Jacobi forms have a Fourier expansion $\sum{c(4n-r^2)q^np^r}$ whose coefficients $c(4n-r^2)$ depend only on a quadratic expression in the degrees \cite{eichler-zagier85}. We get the immediate consequence:
\begin{corollary}
The genus 0 GV invariants depend only on a quadratic function of the curve class. Namely, they satisfy $\GVban = n^0_{\lvert\lvert\beta_{\mathbf{d}}\rvert\rvert}(\Xb)$, where $\lvert\lvert{\beta_{\mathbf{d}}}\rvert\rvert\coloneqq 2d_1+2d_2+2d_1d_2-d_1^2-d_2^2-1.$ 
\end{corollary}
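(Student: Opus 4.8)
The plan is to read the statement off Corollary~\ref{cor:jacobi} using the structure theory of Jacobi forms of index $1$. Recall from Eichler--Zagier \cite{eichler-zagier85} that a weak Jacobi form of index $1$ has a Fourier expansion $\sum_{n,r} c(n,r)\,q^n p^r$ in which the coefficient $c(n,r)$ depends only on the discriminant $4n-r^2$; thus we may write $\phi_{-2,1}(q,p) = \sum_{n,r} c(4n-r^2)\,q^n p^r$, and hence $12\,\phi_{-2,1}(q,p) = \sum_{n,r} 12\,c(4n-r^2)\,q^n p^r$.

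I would then compare this with the generating series of Corollary~\ref{cor:jacobi},
\[
\sum_{d_1,d_2} \GVban\, q^{d_1} p^{d_2-d_1-1} = 12\,\phi_{-2,1}(q,p).
\]
Since the map $(d_1,d_2)\mapsto(n,r)=(d_1,\,d_2-d_1-1)$ is injective on $\ZZ^2_{\geq 0}$, distinct pairs $(d_1,d_2)$ contribute to distinct monomials, and extracting the coefficient of $q^{d_1}p^{d_2-d_1-1}$ on both sides gives
\[
\GVban = 12\, c\!\left(4d_1 - (d_2-d_1-1)^2\right).
\]
It then only remains to record the elementary identity
\[
4d_1 - (d_2-d_1-1)^2 \;=\; 2d_1+2d_2+2d_1d_2-d_1^2-d_2^2-1 \;=\; \lvert\lvert\beta_{\mathbf{d}}\rvert\rvert,
\]
so that $\GVban = 12\,c(\lvert\lvert\beta_{\mathbf{d}}\rvert\rvert)$ depends on $\mathbf{d}$ only through the integer $\lvert\lvert\beta_{\mathbf{d}}\rvert\rvert$; declaring $n^0_{\lvert\lvert\beta_{\mathbf{d}}\rvert\rvert}(\Xb)$ to be this common value yields the asserted equality.

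I do not anticipate a genuine obstacle here: once Corollary~\ref{cor:jacobi} is in hand---it supplies precisely the input that $12\,\phi_{-2,1}$ is a weak Jacobi form of index $1$, to which the Eichler--Zagier coefficient relation applies---the argument is purely formal. The only point that needs care is the exponent bookkeeping under the change of variables $q=xy$, $p=y$: it is the shifted power $p^{\,d_2-d_1-1}$ that appears (the shift by $-1$ coming from the $p^{-1}$ prefactor of $\phi_{-2,1}$), and it is exactly this shift that makes the discriminant $4n-r^2$ come out equal to $\lvert\lvert\beta_{\mathbf{d}}\rvert\rvert$ itself rather than to a translate of it. One may also note that $\lvert\lvert\beta_{\mathbf{d}}\rvert\rvert$ is the restriction to $[C_3]$-degree $1$ of the quadratic form $2(d_1d_2+d_1d_3+d_2d_3)-d_1^2-d_2^2-d_3^2$ on the fiber lattice $\ZZ[C_1]\oplus\ZZ[C_2]\oplus\ZZ[C_3]$, which is the structural reason a quadratic function of the curve class appears.
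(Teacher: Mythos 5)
Your argument is exactly the paper's: the statement is derived as an immediate consequence of Corollary~\ref{cor:jacobi} together with the Eichler--Zagier fact that an index~$1$ weak Jacobi form has Fourier coefficients depending only on the discriminant $4n-r^2$, and your identification $(n,r)=(d_1,\,d_2-d_1-1)$ with the check $4d_1-(d_2-d_1-1)^2=\lvert\lvert\beta_{\mathbf{d}}\rvert\rvert$ is the (elementary) computation the paper leaves implicit. The proposal is correct and simply spells out the bookkeeping the paper omits.
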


The appearance of the weak Jacobi form $\phi_{-2,1}(q,p)$ in our expression of the GV invariants is somewhat surprising and not well understood. This Jacobi form has appeared, for instance, in the DT partition function for certain elliptically fibered Calabi-Yau threefolds, as well as in other examples.   

\subsection{Outline of method} 
Our method of proof ultimately reduces the computation of the Behrend function weighted Euler characteristic of the moduli space $M=M^{\Xb}_{\beta_{\mathbf{d}}}$ to an actual count of structure sheaves of genus 0 curves. These curves are possibly nonreduced curves in the universal cover $\UFs$ of $\Fsing$. This is a sheaf theoretic analogue of the Gromov-Witten technique of passing to counts of genus 0 curves on the universal cover \cite{bryan-katz-leung01}.

The main idea behind the reductions is to use the motivic nature of weighted Euler characteristics. This allows us to compute using stratification and fixed point sets, even though we do not have a global $\CC^*$ action on our moduli space. An outline of the proof is as follows.

We begin in section \ref{sec: setup} by proving that a stable sheaf is scheme-theoretically supported on a single fiber. This gives us a map $M\to \PP^1$. It then suffices to compute the Behrend function weighted Euler characteristic $e(M,\nu)$ fiberwise. The group scheme action of $\Xb^0\to\PP^1$ on $\Xb\to\PP^1$ induces a fiberwise action on the moduli space, where the group of each fiber of $\Xb^0\to\PP^1$ acts on the corresponding fiber of $M\to \PP^1$. This fiberwise group action preserves the symmetric obstruction theory of the moduli space and hence preserves $\nu$. Thus, $e(M,\nu)$ can be computed on orbits of this action.
 
The generic smooth fibers of $\Xb$ are non-singular Abelian surfaces where the group action is transitive and support no invariant curves. Consequently, the sheaves supported on the smooth fibers contribute zero to $e(M,\nu)$. On the singular fibers $\Fsing$, the group action gives a natural $\CC^*\times \CC^*$ torus action on the moduli space. The fixed points of this action are the only stable sheaves that contribute to $e(M,\nu)$. These sheaves are scheme-theoretically supported on the singular fibers $\Fsing$ with set-theoretic support on the Banana curve configuration $\Cban$. Thus we reduce the problem of computing $\GVban$ to that of counting torus-invariant stable sheaves on $\Fsing$ of curve class $\beta_{\mathbf{d}}$ and Euler characteristic 1.

This count corresponds to the naive Euler characteristic of our moduli space, $\naivev$. This is defined as the Euler characteristic without the Behrend function weighting:
\[
\naivev \coloneqq e(M^{\Xb}_{\beta_{\mathbf{d}}}).
\]
We begin by determining these.

We show in section \ref{sec: equiv} that, in fact, it suffices to count those invariant stable sheaves on $\Fsing$ that push forward from the universal cover $\UFs$. This involves considering the action on the moduli space given by tensoring by line bundles on $\Fsing$. Any sheaf fixed under this action must pull back to an equivariant sheaf on $U(\Fsing)$ which contains a distinguished subsheaf isomorphic under pushforward to the original. Now we need to determine how many of these distinguished stable torus-invariant sheaves there are on $U(\Fsing)$.

These distinguished sheaves on $U(\Fsing)$ can be counted using a combinatorial argument detailed in sections \ref{sec: count} and \ref{sec:combinatorics}. The assumption of Euler characteristic 1 is very restrictive, and together with some elementary stability arguments, we show that the only torus invariant stable sheaves that push forward to invariant sheaves in our moduli space are structure sheaves of arithmetic genus 0 curves that satisfy certain constraints on adjoining components. Such curves can be classified by combinatorics in terms of the number of integer partitions whose odd parts are distinct, and we obtain a closed form generating function for $\naivev$.

Finally, in section \ref{sec:behrend function}, we prove that $\GVban$ is related to $\naivev$ by a sign change. We use the result of \cite[Corollary 3.5]{behrend-fantechi08} that given a $\CC^*$ action with isolated fixed points $[\FF]\in M^{\CC^*}$, the weighted Euler characteristic depends only on the parities of the dimension of the tangent spaces at those points:  
\[
e(M,\nu) = \sum_{[\FF]\in M^{\CC^*}}{(-1)^{\dim T_{[\FF]}M}}.
\]
In \cite{behrend-fantechi08}, this comes from the computation of the weighted Euler characteristic of the Milnor fiber in the presence of a $\CC^*$ action. However, the dimension of the tangent space at isolated fixed points may also be computed using virtual localization, as in \cite{mnop1}. We use the formula given in \cite{mnop1} to calculate the dimension of the groups $\Ext^1(\FF, \FF)$ for our fixed points $[\FF]$, and thus the parity of $\dim(T_{[\FF]}M)$. This finishes the proof of our main result.

Our method is limited to curve classes $d_1[C_1]+d_2[C_2] + [C_3]$ since a simple count of combinations of structure sheaves does not appear to suffice in the general case. For $d_1[C_1]+d_2[C_2] + d_3[C_3]$ with $d_1, d_2, d_3>1$, there are corresponding sheaves on the universal cover which are stable with Euler characteristic 1, but are not structure sheaves. At present, we do not know how to analyze the moduli space in these cases.

\section{Setup to counting on $\Fsing$} \label{sec: setup}

Throughout the rest of this section, we let $X=\Xb \xrightarrow{\pi}\PP^1$ and $M=M^X_{\beta}$, as given by Definitions \ref{def:MXbeta} and \ref{def:Xban} in the introduction.

We begin with two observations that hold for Simpson semistable pure 1-dimensional sheaves $\FF$ with $\chi(\FF) = 1$. First, all semistable sheaves are in fact stable. Second, the stability condition can be restated in terms of Euler characteristic of subsheaves or quotient sheaves.

Recall the definition of semistable and stable sheaves \cite{huybrechts-lehn10}.
\begin{definition} Let $Y$ be a complex projective scheme and $\FF$ a pure coherent sheaf of dimension $d$ on $Y$. Fix an ample line bundle $H=\OO(1)$ on $Y$.
Define the Hilbert polynomial $P(\FF,m)$ and reduced Hilbert polynomial $p(\FF,m)$ of $\FF$ as follows:
\begin{align*}
P(\FF,m) &\coloneqq\chi(\FF\otimes \OO(m))= \sum\limits_{i=0}^{d}{\frac{\alpha_i(\FF)}{i!}m^i},\\
p(\FF,m) &\coloneqq \frac{P(\FF,m)}{\alpha_d(\FF)}.
\end{align*}
We say $\FF$ is semistable if for any proper subsheaf $\FF'\subset \FF$, $p(\FF') \leq p(\FF)$. A semistable sheaf is called stable if the inequality is strict.
\end{definition}

\begin{lemma}
There are no strictly semistable sheaves in $M$. Moreover, the stability condition for $\EE\in M$ is equivalent to the following: $\EE$ is stable if and only if $\chi(\EE')\leq 0$ for any proper subsheaf $\EE'\hookrightarrow \EE$. Equivalently, $\EE$ is stable if and only if $\chi(\EE'')>0$ for any quotient sheaf $\EE\twoheadrightarrow \EE''\neq 0$.
\label{lemma: nosemistable-euler}
\end{lemma}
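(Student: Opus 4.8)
The plan is to run the standard numerical argument for Simpson stability, exploiting the fact that the fixed invariant $\chi(\EE)=1$ is coprime to the multiplicity. First I would recall that for a pure $1$-dimensional sheaf $\FF$ on $X$ with a fixed ample class $H$, the Hilbert polynomial has the form $P_{\FF}(n)=a_1(\FF)\,n+a_0(\FF)$ with $a_0(\FF)=\chi(\FF)$ and $a_1(\FF)$ a positive integer, the multiplicity $H\cdot[\Supp\FF]$ counted with multiplicities; $a_1$ is additive on short exact sequences and vanishes exactly on sheaves of dimension $\le 0$. Simpson (semi)stability of $\FF$ means that the reduced Hilbert polynomial $p_{\FF}\coloneqq P_{\FF}/a_1(\FF)$ satisfies $p_{\EE'}\le p_{\FF}$ (resp.\ $<$) for every proper nonzero subsheaf $\EE'\subset\FF$, which, after comparing leading coefficients, is equivalent to $\chi(\EE')/a_1(\EE')\le \chi(\FF)/a_1(\FF)$ (resp.\ $<$). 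For $\EE\in M$ we have $a_0(\EE)=\chi(\EE)=1$ and $a_1(\EE)=d\coloneqq\beta_{\mathbf{d}}\cdot H$, a positive integer since $\beta_{\mathbf{d}}$ is effective with nonzero $[C_3]$-component.

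\emph{No strictly semistable sheaves.} Suppose $\EE\in M$ were semistable but not stable, so there is a proper nonzero subsheaf $\EE'\subset\EE$ with $p_{\EE'}=p_{\EE}$, i.e.\ $\chi(\EE')/a_1(\EE')=1/d$. Since $\EE$ is pure, $\EE'$ is pure $1$-dimensional, so $1\le a_1(\EE')$; additivity of $a_1$ applied to $0\to\EE'\to\EE\to\EE/\EE'\to 0$ together with $a_1(\EE/\EE')\ge 0$ gives $a_1(\EE')\le d$. From $\chi(\EE')=a_1(\EE')/d>0$ and $\chi(\EE')\in\ZZ$ we get $\chi(\EE')\ge 1$, hence $a_1(\EE')=d\,\chi(\EE')\ge d$; combined with $a_1(\EE')\le d$ this forces $a_1(\EE')=d$ and $\chi(\EE')=1$. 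Then $\EE/\EE'$ has $a_1=0$ and $\chi=0$, so $\EE/\EE'=0$, contradicting properness. (This is the usual observation that $\gcd(a_1(\EE),\chi(\EE))=\gcd(d,1)=1$ precludes proper semistability; since $M$ has no strictly semistable points it is a projective scheme.)

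\emph{The Euler-characteristic criterion.} Assume $\EE\in M$ is stable and let $\EE'\subsetneq\EE$ be a proper subsheaf, which we may take nonzero. As above $1\le a_1(\EE')\le d$, and stability gives $\chi(\EE')/a_1(\EE')<1/d\le 1/a_1(\EE')$, so $\chi(\EE')<1$ and, being an integer, $\chi(\EE')\le 0$. Conversely, suppose $\chi(\EE')\le 0$ for every proper subsheaf $\EE'\subsetneq\EE$; then for a proper nonzero $\EE'$ we have $\chi(\EE')/a_1(\EE')\le 0<1/d$, i.e.\ $p_{\EE'}<p_{\EE}$, so $\EE$ is stable (equivalently, by the previous paragraph, semistable). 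The quotient formulation follows from additivity of $\chi$: writing $\EE'=\ker(\EE\twoheadrightarrow\EE'')$ gives $\chi(\EE'')=1-\chi(\EE')$, and proper subsheaves correspond bijectively to nonzero quotient sheaves, so $\chi(\EE')\le 0$ for all proper $\EE'$ is equivalent to $\chi(\EE'')\ge 1>0$ for all nonzero quotients $\EE''$. Since this criterion refers to no ample class, the asserted independence of the polarization is immediate.

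The argument is entirely numerical and I do not anticipate a real obstacle; the only points needing care are (i) invoking purity of $\EE$ so that a nonzero subsheaf is again pure $1$-dimensional and hence $a_1(\EE')$ is a positive integer bounded above by $d$, and (ii) keeping the normalization $p_{\FF}=P_{\FF}/a_1(\FF)$ straight, so that the relevant slope comparison is with $1/d$ rather than with $\chi(\EE)=1$.
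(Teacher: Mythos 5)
Your proposal is correct and follows essentially the same route as the paper: both compare reduced Hilbert polynomials $\chi(\cdot)/a_1(\cdot)$ against $1/a_1(\EE)$, use $\chi(\EE)=1$ together with integrality to rule out equality of slopes (hence strict semistability), and deduce the Euler-characteristic criterion for subsheaves, with the quotient version following from additivity of $\chi$. Your treatment of the strictly semistable case via additivity of $a_1$ and the vanishing of a quotient with $a_1=0$, $\chi=0$ is a slightly more careful spelling-out of the paper's terser argument, but it is the same idea.
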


\begin{proof}

This follows from the definition of stability.
\end{proof}

\begin{corollary}
$M$ is independent of the choice of polarization $H$.
\end{corollary}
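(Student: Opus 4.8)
The plan is to read this off directly from Lemma \ref{lemma: nosemistable-euler}. First I would fix two polarizations $H$ and $H'$ on $X$ and observe that for any pure $1$-dimensional sheaf $\FF$ with $\ch_2(\FF) = \beta^\vee$ and $\chi(\FF) = 1$ the Hilbert polynomial with respect to either polarization is already pinned down: in the notation of the proof of Lemma \ref{lemma: nosemistable-euler} one has $\alpha_0(\FF) = \chi(\FF) = 1$ and $\alpha_1(\FF) = \chi(\FF|_H)$, which depends only on the support class $\beta$ (hence only on $\ch_2(\FF)$) and on the choice of $H$. Thus Definition \ref{def:MXbeta} produces, for each of $H$ and $H'$, one of Simpson's moduli schemes of semistable sheaves with a definite Hilbert polynomial, and the two moduli problems differ only through the a priori dependence of the semistability condition on the polarization.

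Next I would invoke Lemma \ref{lemma: nosemistable-euler}: for such $\FF$, being $H$-semistable is equivalent to the requirement that $\chi(\FF') \leq 0$ for every proper subsheaf $\FF' \subset \FF$ (equivalently $\chi(\FF'') > 0$ for every nonzero quotient $\FF \twoheadrightarrow \FF''$), and this requirement involves no polarization; moreover all such sheaves are stable. Hence the $H$-semistable and the $H'$-semistable sheaves in this class of Chern character and Euler characteristic are literally the same sheaves. The same comparison applies to flat families over any base $T$: flatness is independent of the polarization, and by the lemma the fiberwise semistability condition with respect to $H$ and with respect to $H'$ is, in both cases, the polarization-free Euler-characteristic condition on fibers. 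Therefore the moduli functors of flat families of $H$-semistable, respectively $H'$-semistable, sheaves in this class coincide, and since $M$ corepresents this functor — in fact represents it, there being no strictly semistable points — the two schemes are canonically isomorphic.

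I do not expect a genuine obstacle: the content has already been extracted in Lemma \ref{lemma: nosemistable-euler}. The only point requiring minor care is the first step, namely checking that prescribing $(\ch_2, \chi)$ rather than a Hilbert polynomial is compatible with Simpson's framework — i.e. that $(\ch_2, \chi)$ determines the $H$-Hilbert polynomial for each choice of $H$ — so that the two moduli spaces really are built from one common moduli functor once the polarization-dependence of stability has been removed.
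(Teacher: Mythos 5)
Your proposal is correct and follows the same route as the paper: the corollary is read off directly from Lemma \ref{lemma: nosemistable-euler}, which recasts (semi)stability as the polarization-free condition $\chi(\EE')\leq 0$ on proper subsheaves. The paper states this in one sentence; your additional care about the Hilbert polynomial being determined by $(\ch_2,\chi)$ and about identifying the moduli functors in families is a sound (and slightly more complete) elaboration of the same argument.
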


We begin by showing that the moduli space $M$ has the structure of a scheme over $\PP^1$.

\begin{proposition}\label{prop:fibersupp}
Suppose $\beta$ is a curve class such that $\pi_*\beta=0$. Let $\EE\in M$. Then $\EE$ is scheme theoretically supported on a single fiber.
\end{proposition}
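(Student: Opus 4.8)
The plan is to show that the scheme-theoretic support of $\EE$, being a proper closed subscheme of $X$ whose every component maps to a point under $\pi$ (by the hypothesis $\pi_*\beta=0$ together with purity of $\EE$), must in fact lie set-theoretically inside a single fiber, and then to upgrade this to a scheme-theoretic statement using stability. First I would recall that for a pure $1$-dimensional sheaf $\EE$ the support $Z=\Supp(\EE)$ is a curve (with its Fitting or annihilator scheme structure), and that $\pi|_Z\colon Z\to\PP^1$ is a proper morphism. Since $[Z]$ (with multiplicities) represents a multiple of $\beta$, and $\pi_*\beta=0$, every irreducible component of $Z$ is contracted by $\pi$, hence maps to a point; a priori these points could differ, so $Z$ could be a disjoint union of curves lying in several distinct fibers.

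The key step is to rule out that $Z$ meets more than one fiber. Suppose $\pi(Z)=\{t_1,\dots,t_k\}\subset\PP^1$ with $k\geq 2$. Then $\pi^{-1}(t_i)$ are pairwise disjoint closed subsets, so $Z=\coprod_i Z_i$ with $Z_i=Z\cap\pi^{-1}(t_i)$ a nonempty closed-and-open piece of $Z$. Correspondingly $\EE$ decomposes as a direct sum $\EE=\bigoplus_i \EE_i$ with $\EE_i$ supported on $Z_i$ (the idempotents in $\OO_Z$ cutting out the clopen pieces act on $\EE$). Now I would invoke Lemma~\ref{lemma: nosemistable-euler}: since $\chi(\EE)=\sum_i\chi(\EE_i)=1$, at least one summand, say $\EE_1$, has $\chi(\EE_1)\geq 1$. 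But $\EE_1\hookrightarrow\EE$ is a proper subsheaf (as $k\geq 2$ forces $\EE_1\neq\EE$), contradicting stability, which requires $\chi(\EE')\leq 0$ for every proper subsheaf. Hence $k=1$ and $Z$ lies set-theoretically in one fiber $\pi^{-1}(t)$.

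It remains to promote this to a \emph{scheme-theoretic} statement: I want to show $\EE$ is a sheaf on the scheme-theoretic fiber $X_t=\pi^{-1}(t)$, i.e. that the ideal of $X_t$ annihilates $\EE$. Pull back a local coordinate $s$ vanishing at $t\in\PP^1$ to a function $f=\pi^\ast s$ on $X$; multiplication by $f$ gives a map $f\colon\EE\to\EE$ whose cokernel is supported (set-theoretically, by the previous paragraph, so scheme-theoretically after possibly raising $f$ to a power — but here we can argue directly) inside $X_t$. The image $f\EE\subset\EE$ is a subsheaf; if it were nonzero and proper it would again violate stability unless $\chi(f\EE)\leq 0$, while the kernel $\EE[f]$ is a subsheaf of $\EE$ supported on $Z\cap X_t$, hence has dimension $\leq 1$, and purity of $\EE$ forces $\EE[f]$ to be either $0$ or all of $\EE$. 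I would run the numerical argument: from $0\to\EE[f]\to\EE\xrightarrow{f}\EE\to\EE/f\EE\to0$ one gets $\chi(\EE[f])=\chi(\EE/f\EE)$; since $\EE/f\EE\neq 0$ (it is supported on the fiber and $\EE$ is not already a fiber sheaf only if $f\EE\neq0$), stability applied to the quotient $\EE\twoheadrightarrow\EE/f\EE$ gives $\chi(\EE/f\EE)\geq 1$, hence $\chi(\EE[f])\geq 1$ with $\EE[f]\subset\EE$ a subsheaf — forcing $\EE[f]=\EE$ by stability, i.e. $f\EE=0$. Thus $f$ annihilates $\EE$, and since finitely many such $f$ (the pullbacks of functions generating the maximal ideal at $t$, using that $\PP^1$ is a smooth curve so $\mathfrak{m}_t$ is principal — a single $f$ suffices) generate the ideal of $X_t$, $\EE$ is scheme-theoretically supported on $X_t$.

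The main obstacle I anticipate is the scheme-theoretic refinement in the last paragraph: the set-theoretic statement is essentially formal from $\pi_*\beta=0$ plus the clopen-decomposition/stability argument, but making sure the nonreduced structure of $\EE$ does not "spread transversally" out of the fiber requires the careful numerical bookkeeping via the four-term exact sequence for multiplication by $f$, and one must be slightly careful that $\EE/f\EE$ is genuinely nonzero (which holds because $\EE$ has $1$-dimensional support meeting $X_t$, so $f$ is not a nonzerodivisor-to-an-iso on $\EE$) before applying the quotient form of stability from Lemma~\ref{lemma: nosemistable-euler}. Everything else — properness of $\pi|_Z$, contraction of components, the idempotent decomposition — is routine.
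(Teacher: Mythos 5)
Your proof is correct and follows essentially the same route as the paper: first set-theoretic containment in a single fiber via instability of direct sums, then scheme-theoretic containment by showing that multiplication by the pullback $f=\pi^*s$ of a local equation of the point annihilates $\EE$ (the paper twists by $\OO_X(F)$ and uses that a stable sheaf is simple, so the map $\EE\to\EE$ is zero or an isomorphism, while you reach the same conclusion from the Euler-characteristic form of stability applied to $\EE[f]$ and $\EE/f\EE$). One aside in your write-up is false as stated --- purity does not force the kernel $\EE[f]$ to be $0$ or all of $\EE$ (a thickening of a fiber curve in the transverse direction has a proper nonzero pure kernel) --- but your four-term exact sequence argument supersedes it, so the proof is unaffected.
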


\begin{proof} Let $C=(\Supp\EE)_{\textsc{red}}$ be the reduced support of $\EE$. Since $\beta$ is a fiber class, $\Supp\pi_*\EE = \{p_i\}$ is a finite set of points, so $C$ is a collection of fibers. But direct sums are necessarily unstable so the support of $\EE$ must be connected. Hence, the set theoretic support of  $\EE$ is contained in a single fiber $F=F_x$, for some $x\in\PP^1$.

Now $i:F\hookrightarrow X$ is a closed subscheme so we have the exact sequence:
$$0\rightarrow \II_{F/X}\rightarrow \OO_X\rightarrow i_{*}\OO_F\rightarrow 0.$$ 

Since $F$ is an effective Cartier divisor on the nonsingular $X$, $\OO_X(F)$ is locally free and we can tensor by $\OO_X(F)$ to get the short exact sequence:
$$0\rightarrow \OO_X \rightarrow \OO_X(F)\rightarrow i_{*}\OO_F(F)\rightarrow 0.$$

The normal bundle of the fiber class $F$ is trivial, so $\OO_F(F)\cong\OO_F$ and we get:
$$0\rightarrow \OO_X \rightarrow \OO_X(F)\rightarrow i_{*}\OO_F\rightarrow 0,$$
which we can tensor with $\EE$,
$$ \EE \rightarrow \EE(F)\rightarrow \EE_F\rightarrow 0.$$

Again, because $\EE$ is supported on the fiber class $F$, and $\OO_X(F)$ is a trivial line bundle when restricted to $F$, $\EE(F)\cong\EE$:
$$ \EE \rightarrow \EE\rightarrow \EE_F\rightarrow 0.$$

By stability, $\EE \rightarrow \EE$ is either the zero map or an isomorphism, which implies $\EE_F=0$ or $\EE\cong \EE_F$. By assumption, $\EE$ and hence $\EE_F$ is nonzero, so $\EE \cong \EE_F$ and $\EE$ is scheme-theoretically supported on $F$.
\end{proof}

This gives us a natural map $\rho:M\to \PP^1$ which allows us to compute the \wec\ fiberwise. Recall that for any constructible morphism, the weighted Euler characteristic can be computed as a pushforward \cite{macpherson74}. So the map $\rho:M\to \PP^1$ allows us to compute the Behrend function weighted Euler characteristics fiberwise. Thus, we get:
\[
e(M,\nu)=e(\PP^1, \rho_*\nu),
\]
where $(\rho_*\nu)(t)= e(M_t, \nu_t)$ for $t\in \PP^1$, $M_t\coloneqq\rho^{-1}(t)$, and $\nu_t\coloneqq\nu|_{M_t}$.

Recall (Eq.~\ref{eq:xbansmooth}) that $\Xb^0\eqqcolon X^0$, the smooth locus of the fiber map of the Banana manifold is a group scheme $X^0\to\PP^1$ that acts on the Banana manifold $\pi:X\to\PP^1$.

Let $X_t$ be a fiber of $X$, and $G_t$ the group of the fiber:
\begin{align*}
X_t&\coloneqq \pi^{-1}(t)\subset X,\\
G_t&\coloneqq X^0\cap X_t, \qquad t\in \PP^1.
\end{align*}

The nonsingular fibers are Abelian surfaces which are products of an elliptic curve $E$ with itself. In this case, the group of the fiber is the fiber itself, 
\begin{align*}
G_t&=X_t, \quad\text{ when } X_t =E\times E,
\end{align*}
and $G_t$ acts by translation in the group law.

On the singular fibers $\Fsing$ (Eq.~\ref{eq:fsing}), the group of the fiber is a torus acting by translation,  
\begin{align*}
G_t&=\Fsing\backslash \Cban \cong \CC^*\times \CC^*, \quad\text{ when } X_t =\Fsing.
\end{align*}

\begin{proposition} \label{prop:tfixcount}
To compute $n^0_\beta(X)$, it suffices to count those sheaves of $M$ with scheme theoretic support contained in $\amalg\,\Fsing$ and set-theoretic support contained in $\amalg\,\Cban$, which are also invariant under the action of the group scheme $X^0\to\PP^1$. 
\end{proposition}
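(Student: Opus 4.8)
The plan is to combine the fibration map $\rho:M\to\PP^1$ from Proposition~\ref{prop:fibersupp} with the group scheme action on the fibers, and to handle the smooth fibers and the singular fibers separately. First I would observe that $e(M,\nu)=e(\PP^1,\rho_*\nu)$, so it suffices to understand each fiber $M_t$ and its contribution $e(M_t,\nu_t)$. The group scheme $X^0\to\PP^1$ acts on $X\to\PP^1$ (by \cite[\textsection 3.5]{bryan19}) and this induces a fiberwise action on $M\to\PP^1$: the group $X^0_t$ of the fiber over $t$ acts on $M_t$ by translation of sheaves. The key point is that this action is by automorphisms of $X$ compatible with the Calabi-Yau structure, so it preserves the symmetric obstruction theory on $M$, and hence preserves the Behrend function $\nu$. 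Therefore, by the motivic/constructible additivity of the weighted Euler characteristic, $e(M_t,\nu_t)$ can be computed by summing $\nu$-weighted Euler characteristics over the orbits of $X^0_t$ on $M_t$, and in particular any orbit with vanishing topological Euler characteristic contributes nothing.

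Next I would dispose of the smooth fibers. For $t\in\PP^1$ not one of the twelve special points, the fiber $X_t$ is a nonsingular abelian surface and $X^0_t=X_t$ acts on itself by translation, transitively. A stable sheaf $\EE\in M_t$ is scheme-theoretically supported on $X_t$ with some $1$-dimensional support curve $C$; since $\beta_{\mathbf d}$ is a nonzero fiber class, $C$ is a nonempty effective curve on the abelian surface $X_t$. But an abelian surface carries no curve that is invariant under the full translation action — translation moves any such $C$ nontrivially — so the stabilizer of any point of $M_t$ is a proper (hence positive-dimensional, or at worst finite but then the orbit is a positive-dimensional abelian variety) subgroup, and in every case the orbit is a positive-dimensional abelian variety, which has topological Euler characteristic zero. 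Hence every orbit contributes zero and $e(M_t,\nu_t)=0$ for smooth fibers. (I should be slightly careful here: one wants that the $X_t$-action on $M_t$ has all orbits of even positive dimension; the cleanest formulation is that the support map $M_t\to\mathrm{(Chow/Hilb~of~}X_t)$ is $X_t$-equivariant and the target, parametrizing effective curves of a fixed class, has no fixed points under translation, so stabilizers are positive-dimensional-free... — in any event the orbits are translates of abelian subvarieties and $e_{\mathrm{top}}=0$.)

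It remains to treat the twelve singular fibers $(\Fsing)_i$. Here the identity component of $X^0_t$ is isomorphic to $\CC^*\times\CC^*$ together with a copy of the smooth locus of the singular fiber, and restricting to the torus $T=\CC^*\times\CC^*$ gives a torus action on $M_t$. By the same additivity argument, $e(M_t,\nu_t)=e(M_t^T,\nu_t)$, since the complement $M_t\setminus M_t^T$ is a union of orbits each isomorphic to $\CC^*$ or $\CC^*\times\CC^*$ (or $T$-invariant locally closed sets fibered in such), all with zero Euler characteristic. Finally I would argue that any $T$-fixed stable sheaf $\EE$ on $\Fsing$ must have set-theoretic support inside the Banana curve $\Cban$: the $T$-fixed locus of $\Fsing$ (equivalently, the locus where the torus acts non-freely) is exactly $\Cban$, so a $T$-invariant $1$-dimensional subscheme of $\Fsing$ must be supported on $\Cban$; in particular the support curve of $\EE$, being $T$-invariant and $1$-dimensional, lies in $\Cban$. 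Collecting these reductions — smooth fibers contribute $0$, singular-fiber contributions reduce to $T$-fixed sheaves, and $T$-fixed sheaves are supported on $\amalg\,\Cban$ and are in particular $X^0$-invariant — gives exactly the statement of Proposition~\ref{prop:tfixcount}.

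The main obstacle I anticipate is the verification that the group scheme action genuinely preserves the Behrend function, and more precisely that the weighted Euler characteristic is additive over orbits in the way used above. This requires knowing that the action of $X^0\to\PP^1$ lifts to an action on the moduli space compatible with its symmetric obstruction theory (so that $\nu$ is constant on orbits by \cite{behrend09}), and then invoking the standard fact that for a $\CC^*$- (or abelian-variety-) action with weighting constant on orbits, the free orbits drop out of $e(M,\nu)$. The smooth-fiber vanishing also needs the genuinely geometric input that an abelian surface supports no translation-invariant curves, which I would phrase via the equivariant support morphism. Once these structural facts are in place, the rest is bookkeeping across the twelve singular fibers.
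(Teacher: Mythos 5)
Your proposal is correct and follows essentially the same route as the paper: compute $e(M,\nu)$ fiberwise via $\rho:M\to\PP^1$, use the group scheme action (which preserves the symmetric obstruction theory and hence $\nu$, verified later in Section~\ref{sec:behrend function}) to reduce to orbits, kill the smooth-fiber contributions because the transitive abelian-surface action admits no invariant curves, and reduce the singular fibers to $T$-fixed sheaves, whose supports must lie in $\Cban$ since that is the only $T$-invariant curve in $\Fsing$. The extra care you take about orbits being positive-dimensional abelian varieties of Euler characteristic zero is a slightly more explicit version of the paper's appeal to $e(G_t)=0\Rightarrow e(M_t)=e(M_t^{G_t})$, but the substance is identical.
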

\begin{proof}
On each fiber, $X_t=\pi^{-1}(t)\subset X, t\in \PP^1$, the group of the fiber $G_t$ also acts on sheaves supported on $X_t$, which in turn, induces an action of $G_t$ on $M_t= \rho^{-1}(t)$. 
We will show in Section~\ref{sec:behrend function} that the group scheme action is trivial on $K_X$ and preserves the Behrend function $\nu$. This algebraic group action of $G_t$ on $M_t$, gives us a stratification of $M_t$ into locally closed equivariant subsets. By \cite{behrend09, behrend-fantechi08}, $e(M_t, \nu_t)$ can be computed on orbits of this action.

In particular, if the topological Euler characteristic of the group vanishes, $e(G_t)=0$, as it does here, then $e(M_t)= e(M_t^{G_t})$, because the Euler characteristic can be computed by strata. The fixed points of the group action on the moduli space corresponds to an isomorphism class of sheaves, $[E]$ such that $[E]\cong[g^* E]$, where $g:X_t\to X_t$ is the action on the underlying space given by the group element $g\in G_t$. In particular, the support of the sheaf has to be preserved by the group action. 
Over general points $t\in\PP^1$, the fiber $X_t$ is smooth and the group action is that of the Abelian surface acting transitively on itself through translation in the group law. So these fibers contain no invariant curves. Consequently, the sheaves supported on smooth fibers do not contribute to $e(M, \nu)$. 

On the singular fibers isomorphic to $\Fsing$, the Banana curve $\Cban$ is the only curve preserved by the action of $\CC^*\times \CC^*$. Thus we have reduced our problem of computing $n^0_\beta(X)$ to counting only those sheaves $\FF\in M$ which are $\CC^*\times \CC^*$-invariant and with $(\Supp\FF)_{\textsc{red}}\subset\amalg\,\Cban$. By Proposition \ref{prop:fibersupp}, these sheaves are scheme-theoretically supported on $\amalg\,\Fsing$.
\end{proof}

Since each of the twelve singular fibers are isomorphic to, and disjoint from, each other, it suffices to count the torus-invariant sheaves in $M$ supported on only one of these fibers. Multiplying this count by twelve then gives the invariant $n^0_{\beta}(X)$. For the remainder of the paper, we will focus on such sheaves supported on one of the singular fibers.

\begin{definition}
Fix one of the singular fibers of $X$, which we will also call $\Fsing$.

Define $\Msing\subset M$ to be sheaves in $M$ supported on $\Fsing$,
\[
\Msing = \{[\FF]\in M| \Supp \FF \subset \Fsing\} \subset M.
\]

Let $T$ be the 2-torus which acts on the fiber $\Fsing$ and thus on $\Msing$:
\begin{gather*}
T\coloneqq X^0\cap \Fsing \cong \CC^*\times \CC^*,\\
T \text{ acts on }\Msing.
\end{gather*}

Define $\MsingT$ to be sheaves in $\Msing$ invariant under the action of $T$:
\[
\MsingT \coloneqq \left\{[\FF]\in \Msing |\, [ \FF ] \text{ invariant under } T \right\}.
\]
\end{definition}

With this notation in place, the following corollary to Proposition \ref{prop:tfixcount} is immediate.
\begin{corollary} The Gopakumar-Vafa invariants of $X$ can be computed from the Behrend function weighted count of $\MsingT$:
\[
e(M,\nu) = 12 e(\MsingT,\nu|_{\MsingT}).
\]
\end{corollary}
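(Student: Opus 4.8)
The plan is to assemble this corollary directly from the two facts already established: Proposition~\ref{prop:tfixcount}, which says that the only sheaves contributing to $e(M,\nu)$ are those with scheme-theoretic support in $\amalg\,\Fsing$, set-theoretic support in $\amalg\,\Cban$, and invariance under the group scheme $X^0\to\PP^1$; and the fiberwise decomposition $e(M,\nu)=e(\PP^1,\rho_*\nu)$ obtained after Proposition~\ref{prop:fibersupp}. First I would note that the sheaves singled out by Proposition~\ref{prop:tfixcount} all live in the fibers $\rho^{-1}(t_1),\dots,\rho^{-1}(t_{12})$ over the twelve points $t_i\in\PP^1$ whose fibers $X_{t_i}\cong\Fsing$ are singular; over every other point the fiber of $M$ contributes $0$ to $e(M,\nu)$, as shown in the proof of Proposition~\ref{prop:tfixcount}. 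Hence $e(M,\nu)=\sum_{i=1}^{12} e(M_{t_i},\nu_{t_i})$, where $M_{t_i}=\rho^{-1}(t_i)$ is exactly the subscheme $\Msing$ attached to the $i$-th singular fiber.

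Next I would use that the twelve singular fibers are pairwise isomorphic and pairwise disjoint, and that this isomorphism is compatible with the ambient geometry — in particular with the symmetric obstruction theory and hence with the Behrend function $\nu$, since $\nu$ is intrinsic to the moduli scheme. Therefore each summand $e(M_{t_i},\nu_{t_i})$ equals $e(\Msing,\nu|_{\Msing})$ for the fixed reference fiber $\Fsing$, giving $e(M,\nu)=12\,e(\Msing,\nu|_{\Msing})$. Finally, on $\Msing$ the relevant group is the torus $T=X^0\cap\Fsing\cong\CC^*\times\CC^*$; since $e(T)=0$, the same stratification-by-orbits argument from Proposition~\ref{prop:tfixcount} (invoking \cite{behrend09,behrend-fantechi08}) yields $e(\Msing,\nu|_{\Msing})=e(\MsingT,\nu|_{\MsingT})$, because the non-fixed orbits are unions of $T$-orbits each of vanishing Euler characteristic and $\nu$ is $T$-invariant. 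Combining the two displayed equalities gives $e(M,\nu)=12\,e(\MsingT,\nu|_{\MsingT})$, which is the claim.

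There is essentially no obstacle here — the corollary is a bookkeeping consequence of Proposition~\ref{prop:tfixcount} together with the fiberwise additivity of the Behrend-weighted Euler characteristic. The only point that needs a word of care is the claim that the identification $\nu|_{M_{t_i}}$ matches $\nu|_{\Msing}$ under the isomorphism of singular fibers, and more subtly that the $T$-invariance of $\nu$ on $\Msing$ (used to pass from $\Msing$ to $\MsingT$) is legitimate; both of these rest on the fact, deferred to Section~\ref{sec:behrend function}, that the group scheme action is trivial on $K_X$ and preserves the symmetric obstruction theory. I would simply cite that forthcoming fact at the appropriate spot rather than reprove it, exactly as the proof of Proposition~\ref{prop:tfixcount} already does.
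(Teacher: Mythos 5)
Your argument is correct and follows the same route the paper takes: the corollary is presented there as an immediate consequence of Proposition~\ref{prop:tfixcount} together with the observation that the twelve singular fibers are pairwise isomorphic and disjoint, so each contributes equally and one passes to the $T$-fixed locus on a single reference fiber. Your additional care about $\nu$ being preserved under the fiber isomorphisms and the $T$-action matches the paper's deferral of that point to Section~\ref{sec:behrend function}.
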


\section{Geometry}
We want to convert our problem into one of counting sheaves on the universal cover $\UFs\xrightarrow{\pr}\Fsing$. In this section, we explain some of its geometry that we will need in the rest of the paper.
\subsection{Geometry of $\UFs$}\label{geometry}
First we discuss some of the geometry of the universal cover, although we will not need the description of the formal neighborhood until Section~\ref{sec:behrend function}.

\begin{notation} Denote by
\begin{itemize}
\item $\Fhat$: the formal completion of $X$ along $\Fsing$, 
\item $\UFs$: the universal cover of the singular fiber $\Fsing$, 
\item $\UFhat$ : the universal cover of $\Fhat$,
\item $\textsc{Nrm}(\Fsing)$: the normalization of $\Fsing$, 
\item $\Fnormwidehat$: the formal completion of the total space of the canonical bundle of the blow up of $\PP^1 \times \PP^1$ at the two torus fixed antidiagonal points, along the zero section, 
\begin{equation}
\begin{aligned}
\Fnormwidehat\cong \reallywidehat{\Bl_{a,b}(\PP^1 \times \PP^1)} &\lhook\joinrel\longrightarrow \textsc{Tot }K(\Bl_{a,b}\PP^1 \times \PP^1),\\
 &\{a,b\}=\{(0,\infty), (\infty,0)\} \in \PP^1 \times \PP^1.
\end{aligned}
\end{equation}
\end{itemize}
\end{notation}

We regard $\Fhat$ as a formal Calabi-Yau threefold. In \cite[Proposition 4.10]{bryan19} it is shown that $\Fnormwidehat$ is an \'{e}tale cover of $\Fhat$,
\[
 \reallywidehat{\textsc{Nrm}(\Fsing)} \xrightarrow{\text{\'{e}tale}}  \Fhat.
\]

The momentum polytope of $\textsc{Nrm}(\Fsing)$ and its toric fan are pictured in Figures \ref{fig:mompoly_normfsing} and \ref{fig:fan_normfsing}. 

\begin{figure}[th!]
  \centering
  \scalebox{.5}{
\begin{tikzpicture}[y=0.80pt, x=0.80pt, yscale=-1.000000, xscale=1.000000, inner sep=0pt, outer sep=0pt]
\begin{scope}[shift={(0,0)}]
  \path[fill=black,line join=miter,line cap=butt,line width=0.800pt]
    (0.0000,0.0000) node[above right] (flowRoot4196) {};
  \path[draw=black,line join=miter,line cap=butt,even odd rule,line width=1.378pt]
    (5.3335,143.7407) .. controls (5.3335,75.6406) and (5.3335,75.6406) ..
    (5.3335,75.6406);
  \path[draw=black,line join=miter,line cap=butt,even odd rule,line width=1.378pt]
    (5.4850,75.7833) -- (74.2807,6.9876);
  \path[draw=black,line join=miter,line cap=butt,even odd rule,line width=1.378pt]
    (142.6367,75.2921) .. controls (142.6367,7.1920) and (142.6367,7.1920) ..
    (142.6367,7.1920);
  \path[draw=black,line join=miter,line cap=butt,even odd rule,line width=1.378pt]
    (74.0153,7.2995) .. controls (142.1154,7.2995) and (142.1154,7.2995) ..
    (142.1154,7.2995);
  \path[draw=black,line join=miter,line cap=butt,even odd rule,line width=1.378pt]
    (5.8776,142.9363) .. controls (73.9777,142.9363) and (73.9777,142.9363) ..
    (73.9777,142.9363);
  \path[draw=black,line join=miter,line cap=butt,even odd rule,line width=1.378pt]
    (74.0028,143.5539) -- (142.7984,74.7582);
\end{scope}

\end{tikzpicture}
	}
  \caption{Momentum polytope of $\textsc{Nrm}(\Fsing)$}
	\label{fig:mompoly_normfsing}
\end{figure}

\begin{figure}[th!]
  \centering
  \scalebox{.5}{
	\begin{tikzpicture}[y=0.80pt, x=0.80pt, yscale=-1.000000, xscale=1.000000, inner sep=0pt, outer sep=0pt]
\begin{scope}[shift={(0,0)}]
  \path[fill=black,line join=miter,line cap=butt,line width=0.800pt]
    (0.0000,0.0000) node[above right] (flowRoot4196) {};
  \path[draw=black,line join=miter,line cap=butt,even odd rule,line width=1.378pt]
    (4.8805,75.3092) .. controls (72.9806,75.3092) and (72.9806,75.3092) ..
    (72.9806,75.3092);
  \path[draw=black,line join=miter,line cap=butt,miter limit=4.00,even odd
    rule,line width=1.378pt] (6.0683,7.5708) -- (141.0307,141.9903);
  \path[draw=black,line join=miter,line cap=butt,even odd rule,line width=1.378pt]
    (74.3441,75.3092) .. controls (142.4442,75.3092) and (142.4442,75.3092) ..
    (142.4442,75.3092);
  \path[draw=black,line join=miter,line cap=butt,even odd rule,line width=1.378pt]
    (73.6737,6.9161) .. controls (73.6737,75.0162) and (73.6737,75.0162) ..
    (73.6737,75.0162);
  \path[draw=black,line join=miter,line cap=butt,even odd rule,line width=1.378pt]
    (73.6737,75.0972) .. controls (73.6737,143.1974) and (73.6737,143.1974) ..
    (73.6737,143.1974);
\end{scope}

\end{tikzpicture}

	}
  \caption{Toric fan of $\textsc{Nrm}(\Fsing)$}
	\label{fig:fan_normfsing}
\end{figure}

Then $\Fnormwidehat$, the formal neighborhood of the normalization of the singular fiber, is formally locally isomorphic to the total space of the canonical bundle of the blow up of $\PP^1 \times \PP^1$ at two points, which is the toric three-fold associated to the fan depicted in Figure \ref{fig:fan_totk}. This fan comes from constructing cones over the two-dimensional polytope of Figure \ref{fig:fan_normfsing} placed at height 1 in $\RR^3$.

\begin{figure}[th!]
  \centering
	\scalebox{.5}{
	\begin{tikzpicture}[y=0.80pt, x=0.80pt, yscale=-1.000000, xscale=1.000000, inner sep=0pt, outer sep=0pt]
\begin{scope}[shift={(0,0)}]
  \path[fill=black,line join=miter,line cap=butt,line width=0.800pt]
    (0.0000,0.0000) node[above right] (flowRoot4196) {};
    \path[draw=black,line join=miter,line cap=butt,miter limit=4.00,even odd
      rule,line width=1.378pt] (11.0655,82.3139) -- (308.9403,34.5123);
    \path[draw=black,line join=miter,line cap=butt,even odd rule,line width=0.800pt]
      (70.0756,104.5401) -- (290.9213,59.8254);
    \path[draw=black,line join=miter,line cap=butt,miter limit=4.00,even odd
      rule,line width=0.800pt] (28.2491,58.9869) -- (236.0860,22.1375);
    \path[draw=black,line join=miter,line cap=butt,even odd rule,line width=0.800pt]
      (290.8178,58.5862) -- (307.0798,34.1354);
    \path[draw=black,line join=miter,line cap=butt,even odd rule,line width=0.800pt]
      (238.8510,21.1767) -- (305.4013,34.4365);
    \path[draw=black,line join=miter,line cap=butt,even odd rule,line width=0.800pt]
      (12.2980,83.4703) -- (70.9210,103.5506);
    \path[draw=black,line join=miter,line cap=butt,even odd rule,line width=0.800pt]
      (11.0898,83.4353) -- (29.6555,57.4487);
    \path[draw=black,line join=miter,line cap=butt,even odd rule,line width=0.800pt]
      (160.0000,2.3622) -- (160.0000,202.3622);
    \path[draw=black,line join=miter,line cap=butt,even odd rule,line width=0.800pt]
      (10.0000,152.3622) -- (305.0000,152.3622);
    \path[draw=black,line join=miter,line cap=butt,miter limit=4.00,even odd
      rule,line width=0.800pt] (95.4564,182.9114) .. controls (217.3835,122.7206)
      and (217.3835,122.7206) .. (217.3835,122.7206);
    \path[draw=black,line join=miter,line cap=butt,miter limit=4.00,even odd
      rule,line width=1.378pt] (27.1140,59.3749) -- (290.6287,59.3749);
    \path[draw=black,line join=miter,line cap=butt,miter limit=4.00,even odd
      rule,line width=1.378pt] (71.4842,102.9978) -- (240.2481,20.9164);
    \path[draw=black,line join=miter,line cap=butt,even odd rule,line width=0.800pt]
      (70.0000,102.3622) -- (160.0000,152.3622) -- (290.7679,59.2905) --
      (290.0000,58.9065);
    \path[draw=black,line join=miter,line cap=butt,even odd rule,line width=0.800pt]
      (10.0000,82.3622) -- (160.0000,152.3622);
    \path[draw=black,dash pattern=on 0.80pt off 1.60pt,line join=miter,line
      cap=butt,miter limit=4.00,even odd rule,line width=0.800pt] (310.0000,32.3622)
      -- (160.0000,152.3622) -- (160.0000,152.3622);
    \path[draw=black,dash pattern=on 0.80pt off 1.60pt,line join=miter,line
      cap=butt,miter limit=4.00,even odd rule,line width=0.800pt]
      (160.0000,152.3622) -- (27.6962,58.8981);
    \path[draw=black,dash pattern=on 0.80pt off 1.60pt,line join=miter,line
      cap=butt,miter limit=4.00,even odd rule,line width=0.800pt]
      (160.0000,152.3622) -- (240.0000,22.3622);
\end{scope}

\end{tikzpicture}
	}
 \caption{Toric fan of $\textsc{Tot }K(\Bl_{a,b}(\PP^1\times\PP^1)) \xleftarrow{\text{formal}}\joinrel\rhook  \widehat{\textsc{Nrm}(\Fsing)} $}
	\label{fig:fan_totk}
\end{figure}

The map $\textsc{Nrm}(\Fsing) \to \Fsing$ can be described by identifying opposite edges of the momentum polytope. In the case of $\Fnormwidehat\xrightarrow{\text{\'{e}tale}} \Fhat$, the gluing is done along a formal open neighborhood of the edges of the polytope. 

From this geometry, we see that the $\UFs$ has a piecewise smooth map to $\RR^2$. On each component, this is a moment map for the $T\cong \CC^*\times \CC^*$ action with hexagonal momentum polytope. The image of this map has a planar projection given by an infinite tiling with hexagons, as shown in Figure \ref{fig:hex_tile}. The cones over the dual tiling in Figure \ref{fig:triangle_tile}, placed at height 1, is then the fan associated to a non-finite type toric Calabi-Yau three-fold $\mathfrak{W}$ Figure \ref{fig:triangle_cone} to which $\UFhat$ is formally locally isomorphic. We will return to this viewpoint in Section~\ref{sec:behrend function}.
\begin{figure}[th!]
  \centering
	\scalebox{.5}{
	


	}
  \caption{Non-finite type toric Calabi-Yau three-fold $\mathfrak{W}$ formally locally isomorphic to $\UFhat$}
	\label{fig:triangle_cone}
\end{figure}
From this description in terms of the momentum polytope, we see that the group of deck transformation of $\UFs$ is free abelian on two generators, $\pi_1(\Fsing)\cong \ZZ\times\ZZ$. 

\subsection{Local geometry of $\Cban$}\label{localbanana}
As a consequence of Proposition~\ref{prop:tfixcount}, we only need to consider sheaves with scheme-theoretic support on $\Fsing$. We will thus restrict our discussion in the following sections to studying sheaves on the surface $\Fsing$. In particular, the support of such sheaves can only have thickenings in this surface, and not more generally in other $\Xb$ directions.

The geometry of $\Cban \subset\Fsing$ is such that it looks formally locally like the total space of the union of the two $\CC^*\times\CC^*$-invariant subbundles of $\OO(-1) \oplus\OO(-1)\to \PP^1$, see Figure~\ref{fig:local p1}. 

\begin{figure}[th!]
  \centering
		\scalebox{2}{
		
\begin{tikzpicture}[y=0.80pt, x=0.80pt, yscale=-1.000000, xscale=1.000000, inner sep=0pt, outer sep=0pt]
\begin{scope}[shift={(0,0)}]
	\node at (36,35) {\scalebox{.7}{$\scriptstyle C_3$}};
	\node at (56,23)  {\scalebox{.7}{$\scriptstyle q$}};
	\node at (24,57) {\scalebox{.7}{$\scriptstyle p$}};
	
	\node at (60,10) {\scalebox{.7}{$\scriptstyle C_1$}};
	\node at (35,72) {\scalebox{.7}{$\scriptstyle C_1$}};
	
	\node at (74,33) {\scalebox{.7}{$\scriptstyle C_2$}};
	\node at (10,58) {\scalebox{.7}{$\scriptstyle C_2$}};

  \path[fill=black,line join=miter,line cap=butt,line width=0.800pt]
    (0.0000,0.0000) node[above right] (flowRoot4196) {};
  \begin{scope}[shift={(-187.41261,-25.65353)}]
    \path[draw=black,line join=miter,line cap=butt,even odd rule,line width=0.800pt]
      (189.7821,77.7947) -- (190.0000,77.7363) -- (215.0000,77.7363) .. controls
      (215.0000,102.3622) and (215.0000,102.3622) .. (215.0000,102.3622);
    \path[draw=black,line join=miter,line cap=butt,even odd rule,line width=0.800pt]
      (214.8963,77.7947) .. controls (240.0000,52.6910) and (240.0000,52.6910) ..
      (240.0000,52.6910);
    \path[draw=black,line join=miter,line cap=butt,even odd rule,line width=0.800pt]
      (265.3478,52.2783) -- (265.1298,52.3367) -- (240.1298,52.3367) .. controls
      (240.1298,27.7108) and (240.1298,27.7108) .. (240.1298,27.7108);
    \path[draw=black,line join=miter,line cap=butt,even odd rule,line width=0.800pt]
      (260.6919,50.2970) -- (260.6919,55.2970) -- (260.6919,55.2970);
    \path[draw=black,line join=miter,line cap=butt,even odd rule,line width=0.800pt]
      (195.6426,75.2505) -- (195.6426,80.2505) -- (195.6426,80.2505);
    \begin{scope}[cm={{0.0,1.0,-1.0,0.0,(330.1928,-49.13237)}}]
      \path[draw=black,line join=miter,line cap=butt,even odd rule,line width=0.800pt]
        (82.5793,87.7587) -- (82.5793,92.7587) -- (82.5793,92.7587);
      \begin{scope}[shift={(0,0)},draw=black,fill=black]
        \path[draw=black,line join=miter,line cap=butt,even odd rule,line width=0.800pt]
          (82.5793,87.7587) -- (82.5793,92.7587) -- (82.5793,92.7587);
      \end{scope}
      \begin{scope}[shift={(2.17204,0)},shift={(0,0)},draw=black,fill=black]
        \path[draw=black,line join=miter,line cap=butt,even odd rule,line width=0.800pt]
          (82.5793,87.7587) -- (82.5793,92.7587) -- (82.5793,92.7587);
      \end{scope}
    \end{scope}
    \begin{scope}[cm={{0.0,1.0,-1.0,0.0,(305.57459,12.88107)}}]
      \path[draw=black,line join=miter,line cap=butt,even odd rule,line width=0.800pt]
        (82.5793,87.7587) -- (82.5793,92.7587) -- (82.5793,92.7587);
      \begin{scope}[shift={(0,0)},draw=black,fill=black]
        \path[draw=black,line join=miter,line cap=butt,even odd rule,line width=0.800pt]
          (82.5793,87.7587) -- (82.5793,92.7587) -- (82.5793,92.7587);
      \end{scope}
      \begin{scope}[shift={(2.17204,0)},shift={(0,0)},draw=black,fill=black]
        \path[draw=black,line join=miter,line cap=butt,even odd rule,line width=0.800pt]
          (82.5793,87.7587) -- (82.5793,92.7587) -- (82.5793,92.7587);
      \end{scope}
    \end{scope}
    \path[cm={{0.70711,-0.70711,0.70711,0.70711,(0.0,0.0)}},draw=black,fill=black,line
      join=round,line cap=rect,miter limit=4.00,line width=0.389pt]
      (97.0492,206.9839) circle (0.0343cm);
    \path[cm={{0.70711,-0.70711,0.70711,0.70711,(0.0,0.0)}},draw=black,fill=black,line
      join=round,line cap=rect,miter limit=4.00,line width=0.389pt]
      (133.2380,206.8879) circle (0.0343cm);
  \end{scope}
\end{scope}

\end{tikzpicture}

		}
  \caption{Local geometry of $\Cban$}
	\label{fig:local p1}
\end{figure}

Let $e\subset\UFs$ be any possibly nonreduced, irreducible curve which is a lift of a multiple of a $\Cban$ curve component, 
\[
[pr_*e] = d[C_i].
\] 
We will call such an $e$ an edge. Then $e$ is the intersection of two irreducible surface components in $\UFs$, that is, the intersection of two hexagons in the momentum polytope (Figure~\ref{fig:hex_tile}). Such an edge $e$ has possible thickenings that is determined by two numbers $m_e, n_e \geq 1$, which record the thickening in each of the two surface directions. More concretely, we can represent $\UFs$ near $e$ in local coordinates as the union of the $xy$ and the $xz$ coordinate planes in $\CC^3=\{(x,y,z) \}$. The $x$-axis where these planes meet then corresponds to the edge $e$. Since $e$ must lie in the surface $\UFs$, it has possible thickenings only in the $xy$ or $xz$ plane directions, and we can encode this information with two positive integers $m_e$ and $n_e$:
\[
e\cong \Spec\CC[x,y,z]/(y^{m_e}, z^{n_e}, yz).
\]

More generally, suppose $\CCC\subset\UFs$ is a curve that lies over $\Cban$, $(pr(\CCC))_{\textsc{red}}\subset\Cban$. We will call any point in $\CCC$ which is a preimage of a node of $\Cban$ a vertex. Such a vertex is a point of intersection of up to three edges. These edges project to the three components $C_i$ of $\Cban$ under the covering map. 

In an affine neighborhood of a vertex with three edges, we can express $\CCC$ in local coordinates with the vertex as the origin of $\CC^3$ and the edges as the coordinate axes. Then $\CCC$ has the structure: 
\[\Spec\CC[x,y,z]/(xyz, x^{r}y^{m}, z^{n}x^{b}, y^{a}z^{s})
\]
for some finite thickenings $a,b,m,n,r,s\geq 1$ as shown in Figure~\ref{fig:pile of boxes}. If there are only two components meeting at a vertex, say with a missing $z$-axis, then locally $\CCC$ is given by:
\[
 \Spec\CC[x,y,z]/(xyz, x^{r}y^{m}, z^{n}x, yz^{s}, z^{\max(n,s)}).
\]
This is in fact the same as the degree 3 vertex case if we take the convention that the empty edge has thickenings of lengths $0$ and $1$, where the $0$ length is taken in the direction of the axis with the larger thickening in their shared plane. For example, if $ n\geq s$,
\[
(xyz, x^{r}y^{m}, z^{n}x, yz^{s}, z^{\max(n,s)}) = (xyz, x^{r}y^{m}, z^{n}, yz^{s}).
\]

\begin{figure}[th!]
  \centering
	\scalebox{1.5}{

\begin{tikzpicture}[y=0.80pt, x=0.80pt, yscale=-1.000000, xscale=1.000000, inner sep=0pt, outer sep=0pt]

\begin{scope}[shift={(0,0)}]
   \node at (-20,0) {\includegraphics[scale=.6]{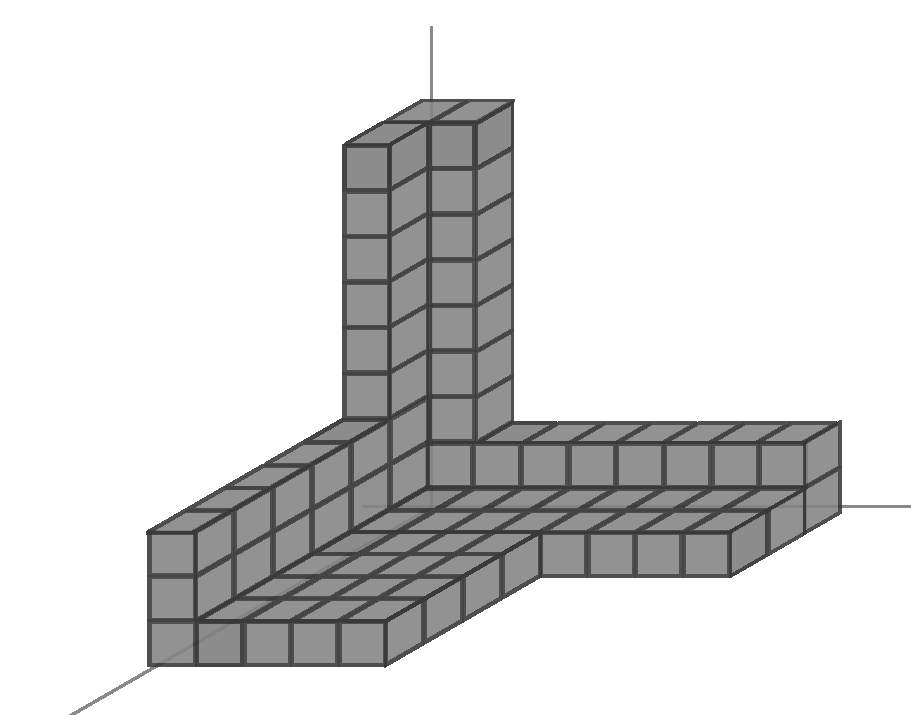}};
	
	\node at (-117,98) {$\scriptstyle x$};
	\node at (107,45)  {$\scriptstyle y\quad$};
	\node at (-32,-88) {$\scriptstyle z$};
	
	\node at (-110,65) {\resizebox{8pt}{18pt}{$\{$}};
	\node at (-120,65) {$\scriptstyle n$};
	\node at (-72,90) {\rotatebox{90}{\resizebox{8pt}{38pt}{$\{$}}};
	\node at (-72,99) {$\scriptstyle m$};
	\node at (-72,104) {$\scriptstyle \quad$};

	\node at (74,55) {\rotatebox{125}{\resizebox{8pt}{20pt}{$\{$}}};
	\node at (80,58) {{$\scriptstyle r$}};

	\node at (90,29) {\resizebox{8pt}{12pt}{$\}$}};
	\node at (97,29) {$\scriptstyle s$};

	\node at (-15,-76) {\rotatebox{270}{\resizebox{8pt}{12pt}{$\{$}}};
	\node at (-15,-83) {{$\scriptstyle a$}};

	\node at (-42,-68) {\rotatebox{300}{\resizebox{8pt}{15pt}{$\{$}}};
	\node at (-48,-75) {$\scriptstyle b$};

	\end{scope}
\end{tikzpicture}
	}
  \caption{Multiple structure of $\CCC$ near a vertex.}
	\label{fig:pile of boxes}
\end{figure}

\section{Reduction to counting on $\UFs$} \label{sec: equiv}
In this section, we explain how to convert our problem into one of counting sheaves on the universal cover $\UFs\xrightarrow{\pr}\Fsing$. We do this by considering a second $\CC^*\times\CC^*$ action on $\MsingT$, that of tensoring with degree zero line bundles on $\Fsing$. 

Recall the categorical equivalence between equivariant sheaves on a covering space with a free group action and sheaves on the quotient. Given a discrete group $G$ and a $G$-space $X$, let $\Coh^G(X)$ be the abelian category of coherent $G$-sheaves on $X$. These are pairs $(\GG, \theta)$, where $\GG$ is a coherent sheaf on $X$ and $\theta$ is a lift of the $G$-action.

Suppose the $G$ action on $X$ is free and let $Y$ be the quotient space, $\pi:X\to Y=X/G$. We have a categorical equivalence between coherent $G$-sheaves on $X$ and coherent sheaves on $Y$, 
\[\Coh^G(X) \to \Coh(Y = X/G).\]
On one hand, given any coherent sheaf $\FF$ on $Y$, its pullback $\pi^*\FF$ is naturally a $G$-sheaf on $X$ and we have a functor $\Coh(Y) \to \Coh^G(X)$. We also have a functor in the other direction. Let $\GG\in \Coh^G(X)$. Then $\pi_*\GG$ has a natural action of $G$ induced from the action of $G$ on $\Coh^G(X)$ and the sheaf of $G$-invariants, $(\pi_*\GG)^G$ is a coherent sheaf on $Y$. This defines an inverse functor $\Coh^G(X) \to \Coh(Y)$.

We now determine $\Pic^0(\Fsing)$, which acts on $\MsingT$ by tensoring.

\begin{definition}Define $P$ to be the degree 0 line bundles on $\Fsing$:
\[
P\coloneqq\Pic^0(\Fsing).
\]
\end{definition}
 
\begin{proposition}\label{prop:fsing bundles} Let $P=\Pic^0(\Fsing)$ as above. Then
\[
P \cong\CC^*\times\CC^*.
\]
\end{proposition}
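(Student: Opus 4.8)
The plan is to realize $P=\Pic^0(\Fsing)$ as a $2$‑dimensional connected closed subgroup of a $2$‑torus, from which it follows at once that $P$ is the whole torus. The two inputs are a dimension count, $\dim P=2$, and a Mayer--Vietoris sequence attached to the normalization $\nu\colon N:=\textsc{Nrm}(\Fsing)\to\Fsing$, which will embed $P$ into $\Pic^0(N)\oplus\Pic^0(\Cban)$.

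For the dimension count: over $\CC$ the group scheme $\Pic(\Fsing)$ is smooth, so $\dim P=\dim_\CC H^1(\OO_{\Fsing})$. Since $\pi\colon\Xb\to\PP^1$ is flat with general fibre a smooth abelian surface, $\chi(\OO_{\Fsing})=\chi(\OO_{\text{ab.\ surf.}})=0$. By adjunction $\omega_{\Fsing}\cong\big(\omega_{\Xb}\otimes\OO_{\Xb}(\Fsing)\big)\big|_{\Fsing}\cong\OO_{\Fsing}$, using $\omega_{\Xb}\cong\OO_{\Xb}$ and the triviality of the normal bundle of a fibre (cf.\ the proof of Proposition~\ref{prop:fibersupp}); as $\Fsing$ is Gorenstein, Serre duality gives $h^2(\OO_{\Fsing})=h^0(\omega_{\Fsing})=h^0(\OO_{\Fsing})=1$, whence $h^1(\OO_{\Fsing})=1+1-0=2$.

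For the embedding: recall $N\cong\Bl_{a,b}(\PP^1\times\PP^1)$, and let $\widetilde D\subset N$ be its toric boundary, a cycle of six $\PP^1$'s. The description of $\Fsing$ as $N$ with opposite edges of the moment hexagon glued says precisely that $\Fsing$ is the pushout of $N\hookleftarrow\widetilde D\to\Cban$, where $\widetilde D\to\Cban$ is finite of degree $2$ (identification of opposite edges); as $\Fsing$ has at worst normal‑crossing singularities along $\Cban$, this conductor square is cocartesian on structure sheaves (Ferrand), giving a short exact sequence $1\to\OO_{\Fsing}^*\to \nu_*\OO_N^*\oplus\OO_{\Cban}^*\to\nu_*\OO_{\widetilde D}^*\to 1$ of sheaves on $\Fsing$ (pushforwards suppressed). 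The associated long exact sequence reads
\[
\CC^*\oplus\CC^*\xrightarrow{\ \alpha\ }\CC^*\to\Pic(\Fsing)\to\Pic(N)\oplus\Pic(\Cban)\to\Pic(\widetilde D),
\]
using $H^0(\OO_Y^*)=\CC^*$ for the connected reduced proper schemes $Y=N,\Cban,\widetilde D$. The map $\alpha(c_1,c_2)=c_1c_2^{-1}$ is surjective, so the connecting map $\CC^*\to\Pic(\Fsing)$ vanishes and $\Pic(\Fsing)\hookrightarrow\Pic(N)\oplus\Pic(\Cban)$; restricting to identity components and using that $N$ is a rational surface (so $\Pic^0(N)=0$) yields an injective homomorphism of algebraic groups $P\hookrightarrow\Pic^0(\Cban)$. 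Now $\Cban$ is a connected projective curve whose components are $\PP^1$'s meeting exactly at the two triple points $p,q$, so its normalization sequence exhibits $\Pic^0(\Cban)$ as the cokernel of a map of tori $(\CC^*)^3\to(\CC^*)^2\oplus(\CC^*)^2$ with image the diagonal, i.e.\ $\Pic^0(\Cban)\cong\CC^*\times\CC^*$ (equivalently, $\Pic^0(\Cban)\cong\Hom(H_1(\Gamma),\CC^*)$ for $\Gamma$ the ``theta'' dual graph, which has $b_1=2$). Hence $P$ is a $2$‑dimensional connected closed subgroup of $\CC^*\times\CC^*$, so $P\cong\CC^*\times\CC^*$.

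The one step that is genuinely geometric rather than formal is the identification of $\Fsing$ with the pushout along the degree‑two map $\widetilde D\to\Cban$, together with the resulting short exact sequence of $\OO^*$‑sheaves — in other words, that the gluing presentation of $\Fsing$ makes the conductor square cocartesian. Everything else is standard: rational surfaces have no $\Pic^0$; $\Pic^0$ of a rational nodal‑type curve is a torus computed from its dual graph; and a full‑dimensional connected closed subgroup of $\CC^*\times\CC^*$ is the whole group.
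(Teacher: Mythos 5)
Your proof is correct, but it takes a genuinely different route from the paper's. The paper works on the universal cover $\pr:\UFs\to\Fsing$: a degree-zero line bundle pulls back to a bundle that is trivial on each rational component of the simply connected $\UFs$, hence trivial, so it is determined by its descent datum, namely a character of $\pi_1(\Fsing)\cong\ZZ\times\ZZ$; this identifies $P$ with $\Hom(\ZZ^2,\CC^*)\cong\CC^*\times\CC^*$ (equivalently $H^1(\pi_1(\Fsing),H^0(\OO_{\UFs}^{\times}))$, as the paper's remark notes). You instead pin $P$ down by a dimension count --- $h^1(\OO_{\Fsing})=2$ from flatness of $\pi$, adjunction, and Serre duality on the Gorenstein fiber --- combined with the units sequence of the conductor square of the normalization, which embeds $P$ into $\Pic^0(\Cban)\cong\CC^*\times\CC^*$ and forces equality. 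Both arguments ultimately rest on the same geometric inputs (rationality of the normalization and the combinatorics of the gluing), packaged differently. What the paper's approach buys, and what your argument does not directly supply, is the explicit presentation of each degree-zero bundle as the descended triple $(\OO_{\UFs},\mu_1,\mu_2)$; that explicit form is precisely what is used later, in Proposition~\ref{prop:atomic}, to decompose $P$-invariant sheaves into eigensheaves on the cover, so the paper's proof is doing double duty. Your approach is in exchange more self-contained as a computation of $\Pic^0$ and does not require controlling the universal cover. The one step you rightly single out --- that the conductor square is cocartesian on structure sheaves --- is legitimate here because $\Fsing$ has (simple) normal crossings along $\Cban$ (two sheets generically, three coordinate-plane-type sheets at $p$ and $q$), hence is seminormal, which is exactly the condition needed for the units sequence you write down.
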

\begin{proof} Let $\pr:\UFs\to \Fsing$ be the universal cover of $\Fsing$ and let $G=\pi_1 (\Fsing)$ be the fundamental group of $\Fsing$, which acts on $\UFs$ by deck transformations. Recall (Sec.~\ref{geometry}) that the universal cover $\UFs$ is a non-finite type, non-normal toric variety with a countable number of irreducible components, each of which is isomorphic to the surface $\Bl_{a,b}(\PP^1\times\PP^1)$, where $a$ and $b$ are two points in $\PP^1\times\PP^1$. We will call this surface $S\coloneqq\Bl_{a,b}(\PP^1\times\PP^1)$ for this proof. Informally, the universal cover is an infinite union of toric surfaces which are moved around by the deck transformations. 

In our case, $G=\ZZ\times\ZZ$ is generated by two elements $G=\langle e_1, e_2 \rangle$, so a lift of the $G$-action is determined by two commuting isomorphisms, $\mu_i :\GG \to e_i^*\GG$, $i=1,2$. 

A degree zero line bundle $L$ on $\Fsing$ corresponds to the $G$ line bundle $\widetilde{L} = (\pr^* L,\mu_1,\mu_2)$. Here, $\pr^* L$ is a degree zero line bundle. Because $\HH^1(S,\OO_{S})=0$, the line bundles on $S$ are determined by their degree. Since $\pr^* L$ restricted to each irreducible surface component of $\UFs$ is degree zero, it is trivial on each component. Then on ${\UFs}$, which is connected and simply connected, $\pr^* L$ is also trivial. If we choose a trivialization $\pr^* L\cong \UFs \times \CC$, then $\mu_i(x,v)=(e_i(x), \mu_i(x) v)$. Since $\mu_i(x)$ is constant on each irreducible surface component of $\UFs$, $\mu_i(x)=\mu_i\in\CC^*$.

Hence, $\widetilde{L}$ is the triple $(\OO_{\UFs},\mu_1,\mu_2)$, where $\mu_i$ is the map which acts on the fiber by multiplication by a constant, $\mu_i\in\CC^*$. 

These $(\OO_{\UFs},\mu_1,\mu_2)$, for $(\mu_1,\mu_2)\in \CC^*\times\CC^*$ are bijective with isomorphism classes of degree zero line bundles on $\Fsing$, and we get $\CC^*\times\CC^* =denot \Pic^0(\Fsing).$
These $(\OO_{\UFs},\mu_1,\mu_2)$, for $(\mu_1,\mu_2)\in \CC^*\times\CC^*$ are bijective with isomorphism classes of degree zero line bundles on $\Fsing$, and we get $\CC^*\times\CC^* = \Pic^0(\Fsing).$

\end{proof}

We will denote the line bundles on $\Fsing$ constructed in the proof of Proposition~\ref{prop:fsing bundles} as $L_{\bm{\mu}}$:
\[
L_{\bm{\mu}}=L_{\mu_1, \, \mu_2}, \qquad\text{ for }\bm{\mu}=(\mu_1,\mu_2)\in P= \CC^*\times\CC^*. 
\]
We will prove in Proposition \ref{prop:atomic} that the fixed points of the action of tensoring by these degree zero line bundles will correspond in our moduli space to a fixed sheaf. This sheaf is a pushforward of a sheaf on the universal cover.

\begin{definition} Consider the action of $P$ on $\Coh(\Fsing)$ given by $(L_{\mu_1, \, \mu_2},\FF)\mapsto L_{\mu_1, \, \mu_2} \otimes \FF$. Define $\MsingTP$ as those sheaves in the moduli space $\MsingT$ which are also invariant under this action of the torus $P$, 
\[
\MsingTP \coloneqq \{ [\FF]\in \MsingT| L_{\mu_1, \, \mu_2} \otimes \FF \cong \FF \text{ for all } (\mu_1,\mu_2)\in P.\}
\]
\end{definition}

We will also define a moduli space of sheaves on $\UFs$. There is an action on $\UFs$ induced naturally from the action of $T$ on $\Fsing$ and we use the same notation for both.
\begin{definition}\label{defn:MU}
Define $\MU$ to be the moduli space
\[
\MU \coloneqq \left\{ \FFF \in \Coh(\UFs) \Bigg|
\begin{aligned}
&\FFF \text{ pure, one dimension, $T$-invariant,}\\
& \text{stable, }[\Supp(\pr_*\,\FF)]=\beta,\, \chi(\FFF)=1
\end{aligned}
\right\} \big/ \text{ iso}.
\]
\end{definition}

To establish the correspondence between sheaves in $\MsingTP$ supported on $\Fsing$ and those in $\MU$ on $\UFs$, we begin with the following observation.

\begin{remark}
Since $\pr:\UFs \to \Fsing$ is a covering map, the purity and dimension of sheaf support is unchanged under pushforward and pullback. The torus $T$ action on $\UFs$ is by definition the pullback of the torus action on $\Fsing$, so the notion of invariance under the torus action is also preserved. Also, note that the Euler characteristic is preserved under pushforward by this covering map, $\chi(\FFF) = \chi(\pr_*(\FFF))$ for any sheaf $\FFF$ on $\UFs$. 
\label{rmk:cover_preserve_props}
\end{remark}

\begin{proposition}\label{prop:atomic}
Let $\EE \in \MsingTP$. Then there is a $\FFF \in \MU$, unique up to translation by deck transformations, such that $\pr_*\FFF \cong \EE$. 
  
\end{proposition}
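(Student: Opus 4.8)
\emph{Proof strategy.} The plan is to pull $\EE$ back to the universal cover, use the extra $P$-invariance to split $\pr^{*}\EE$ into an infinite direct sum of deck translates of a single compactly supported sheaf, and take $\FFF$ to be one of these summands.

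\emph{Setting up.} Recall that $\pr^{*}\EE$ carries a canonical $G$-equivariant structure $\theta=(\theta_{e_1},\theta_{e_2})$, with $G=\pi_1(\Fsing)\cong\ZZ\times\ZZ$, and that $(\pr_{*}\pr^{*}\EE)^{G}\cong\EE$ by the categorical equivalence recalled above. By Proposition~\ref{prop:fsing bundles} we may identify $P=\Pic^0(\Fsing)$ with the character group $\Hom(G,\CC^{*})$: the line bundle $L_{\mu_1\mu_2}$ corresponds to the character $e_i\mapsto\mu_i$, and $\pr^{*}(L_{\mu_1\mu_2}\otimes\EE)$ has the \emph{same} underlying sheaf $\pr^{*}\EE$ but $G$-structure $\boldsymbol\mu\cdot\theta$. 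The hypothesis $L_{\mu_1\mu_2}\otimes\EE\cong\EE$ therefore produces, for each $\boldsymbol\mu\in P$, an $\OO_{\UFs}$-linear automorphism $\phi_{\boldsymbol\mu}$ of $\pr^{*}\EE$ intertwining $\theta$ with $\boldsymbol\mu\cdot\theta$; since $\EE$ is stable, $\Hom_{\Fsing}(\EE,\EE)=\CC$, so $\phi_{\boldsymbol\mu}$ is unique up to a scalar.

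\emph{Rigidifying and decomposing.} The first substantive step is to normalize $\{\phi_{\boldsymbol\mu}\}$ into an honest algebraic action of the torus $P$ on $\pr^{*}\EE$: the class $\phi_{\boldsymbol\mu}\phi_{\boldsymbol\nu}\phi_{\boldsymbol\mu\boldsymbol\nu}^{-1}$ is a scalar by stability, defining a $2$-cocycle of $P$ valued in $\CC^{*}$ which is a coboundary since $P$ is a connected torus (equivalently, rigidify $\phi_{\boldsymbol\mu}$ along a chosen point of $\Supp\EE$), so we may assume $\boldsymbol\mu\mapsto\phi_{\boldsymbol\mu}$ is a homomorphism. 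Decomposing $\pr^{*}\EE$ into $P$-weight spaces, and using $\widehat P\cong G$ canonically, gives $\pr^{*}\EE=\bigoplus_{g\in G}(\pr^{*}\EE)_{g}$; the intertwining relation then shows that $\theta_{h}$ carries the weight-$g$ piece of $\pr^{*}\EE$ isomorphically onto the weight-$(g+h)$ piece of $h^{*}\pr^{*}\EE$, so that $(\pr^{*}\EE)_{g}\cong g^{*}\FFF$ where $\FFF\coloneqq(\pr^{*}\EE)_{0}$, i.e.
\[
\pr^{*}\EE\;\cong\;\bigoplus_{g\in G}g^{*}\FFF .
\]
Restricting this identity over a finite collection of relatively compact, evenly covered opens of $\Supp\EE$ forces all but finitely many of the summands $g^{*}\FFF$ to vanish on each sheet (an infinite direct sum of nonzero sheaves cannot be coherent), so $\Supp\FFF$ is contained in finitely many components of $\pr^{-1}((\Supp\EE)_{\textsc{red}})$; hence $\FFF$ has compact support, $\pr_{*}\FFF$ is coherent, and applying $\pr_{*}$ to the display and taking $G$-invariants yields $\pr_{*}\FFF\cong(\pr_{*}\pr^{*}\EE)^{G}\cong\EE$.

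\emph{Membership in $\MU$ and uniqueness.} Purity and one-dimensionality of $\FFF$ follow from those of $\EE$ since $\pr$ is a covering (Remark~\ref{rmk:cover_preserve_props}); then $\chi(\FFF)=\chi(\pr_{*}\FFF)=\chi(\EE)=1$ and $[\Supp\pr_{*}\FFF]=\beta$ are immediate, and $T$-invariance holds because $T$ commutes with $G$ and fixes each component of $\pr^{-1}(\Cban)$, so by the uniqueness above the $\phi_{\boldsymbol\mu}$ may be taken $T$-equivariant and $T$ preserves every weight space. For stability: a proper subsheaf $\FFF'\subsetneq\FFF$ pushes to a subsheaf $\pr_{*}\FFF'\subseteq\EE$, and if $\chi(\FFF')>0$ then $\chi(\pr_{*}\FFF')=\chi(\FFF')>0$ forces $\pr_{*}\FFF'=\EE=\pr_{*}\FFF$ by Lemma~\ref{lemma: nosemistable-euler}, hence $\pr_{*}(\FFF/\FFF')=0$ and $\FFF/\FFF'=0$ ($\pr$ being finite on the compact $\Supp\FFF$), contradicting properness; so $\FFF\in\MU$ by Lemma~\ref{lemma: nosemistable-euler}. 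Finally, if $\FFF'\in\MU$ also satisfies $\pr_{*}\FFF'\cong\EE$, then using $\pr^{*}\pr_{*}\GG\cong\bigoplus_{g\in G}g^{*}\GG$ for compactly supported $\GG$ on the Galois cover we get $\bigoplus_{g}g^{*}\FFF'\cong\pr^{*}\EE\cong\bigoplus_{g}g^{*}\FFF$, and since $\FFF,\FFF'$ are stable hence indecomposable, a Krull--Schmidt argument on a small open set identifies $\FFF'$ with $g^{*}\FFF$ for some $g\in G$.

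\emph{Main obstacle.} The crux is the middle paragraph: turning the scalar-ambiguous family $\{\phi_{\boldsymbol\mu}\}$ into a genuine $P$-action and verifying that $\theta$ shifts $P$-weights by exactly the corresponding deck element, so that the weight spaces really are the $G$-translates of a single compactly supported sheaf. The compactness of $\Supp\FFF$, the agreement $\pr_{*}\FFF\cong\EE$, and the Krull--Schmidt uniqueness step are routine once one has the explicit covering-space picture of $\pr^{-1}(\Cban)$ from Section~\ref{geometry}.
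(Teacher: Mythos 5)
Your proof is correct and follows essentially the same route as the paper's: pull $\EE$ back to the universal cover, upgrade the $P$-invariance to an honest action of $P$ on $\pr^*\EE$, decompose into weight spaces indexed by $\widehat{P}\cong G$, and observe that the deck transformations permute the weight pieces transitively, so any one of them serves as $\FFF$. You additionally supply details the paper handles elsewhere or leaves implicit — the cocycle normalization of the scalar-ambiguous intertwiners, the compactness of $\Supp\FFF$, the stability and $T$-invariance of $\FFF$ (which the paper defers to Proposition~\ref{prop:stable_updown}), and the Krull--Schmidt uniqueness step — and these are all sound.
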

\begin{proof} Let $G= \ZZ\times \ZZ=\langle e_1, e_2 \rangle$ act on $\UFs$ by the deck transformations.

Now suppose $\EE\in \MsingTP$. Let $\EEE = \pr^*\EE$. Then $\EEE$ is a $G$-sheaf so it defines a triple, $\{\EEE, \phi_1, \phi_2 \}$, where $\phi_i:\EEE\to e_i^*\EEE$ covers the action of $e_i$ on $\UFs$, so that $[\phi_1,\phi_2]=0$.

The line bundles $L_{\mu_1, \, \mu_2}$ pull back to
\[
\pr^* L_{\mu_1, \, \mu_2}  \cong \{\OO_{\UFs}, \mu_1, \mu_2\},
\]
where each $\mu_i$ is the multiplication by scalar map.

The lift of $\EE \otimes L_{\mu_1, \, \mu_2}$ is then the triple 
\[pr^*(\EE \otimes L_{\mu_1, \, \mu_2}) \coloneqq \{\EEE\otimes\OO_{\UFs},\mu_1\phi_1, \mu_2\phi_2 \} = \{\EEE,\mu_1\phi_1, \mu_2\phi_2 \}.\]

By assumption, $\EE$ satisfies $\EE \otimes L_{\mu_1, \, \mu_2} \cong \EE$, for all $\bm{\mu}=(\mu_1, \mu_2) \in \CC^*\times\CC^*$. This means that we have an isomorphism of $G$-sheaves 
\[
\Psi_{\bm{\mu}}:\{\EEE, \phi_1, \phi_2 \} \cong \{\EEE,\mu_1\phi_1, \mu_2\phi_2 \},
\] 
which induces an automorphism
\[
\psi_{\bm{\mu}}:\EEE \to \EEE,
\]
for all $\bm{\mu}=(\mu_1, \mu_2) \in \CC^*\times\CC^*$.

Combining these, we get a commutative diagram
\[ 
\begin{tikzcd}
\EEE \arrow[r, swap, "\phi_{\mathbf{i}}"'] \arrow[d, swap, "\psi_{\bm{\mu}}"]
& e_{\mathbf{i}}^*\EEE \arrow[d, "e_{\mathbf{i}}^*\psi_{\bm{\mu}}"] \\ 
\EEE \arrow[r, "\bm{\mu}\phi_{\mathbf{i}}"]& e_{\mathbf{i}}^*\EEE
\end{tikzcd}
\]

Since $(\EE\otimes L_{\bm{\mu}}) \otimes L_{\bm{\lambda}} \cong \EE\otimes ( L_{\bm{\mu}} \otimes L_{\bm{\lambda}}) \cong \EE $, both correspond to $\{\EEE,{\bm{\lambda}}{\bm{\mu}}\phi_{\mathbf{i}} \}$. In other words,
\[
\Psi_{\bm{\mu}}\circ\Psi_{\bm{\lambda}} = \Psi_{\bm{\mu}\bm{\lambda}}, \quad \text{and}  \quad
\psi_{\bm{\mu}}\circ\psi_{\bm{\lambda}} = \psi_{\bm{\mu}\bm{\lambda}}
\]
and this defines an action of $\CC^*\times\CC^*$ on $\EEE$,
\[ 
\begin{tikzcd}
\EEE \arrow[r, swap, "\phi_{\mathbf{i}}"'] \arrow[d, swap, "\psi_{\bm{\mu}}"]
& e_{\mathbf{i}}^*\EEE \arrow[d, "e_{\mathbf{i}}^*\psi_{\bm{\mu}}"] \\ 
\EEE \arrow[r, swap, "{\bm{\mu}}\phi_{\mathbf{i}}"'] \arrow[d, swap, "\psi_{\bm{\lambda}}"]
& e_{\mathbf{i}}^*\EEE \arrow[d, "e_{\mathbf{i}}^*\psi_{\bm{\lambda}}"] \\ 
\EEE \arrow[r, "{\bm{\lambda}}{\bm{\mu}}\phi_{\mathbf{i}}"]& e_{\mathbf{i}}^*\EEE
\end{tikzcd}
\]
We define the subsheaves $\EEE_{\mathbf{k}}$ of $\EEE$, as follows:
\begin{gather*}
\EEE_{\mathbf{k}}\coloneqq \Ker(\psi_{\mathbf{\mu}}-\mu_1^{k_1}\mu_2^{k_2}\,{\operatorname{Id}}), \\
{\mathbf{k}}\in\ZZ\times\ZZ,\, \bm{\mu}=(\mu_1,\mu_2)\in \CC^*\times\CC^*.
\end{gather*}
This is independent of choice of $\bm{\mu}$, from the definition given above of the action $\psi_{\mathbf{\mu}}$. 

We will call these subsheaves eigensheaves. We can decompose $\EEE$ into eigensheaves of this torus action,
\[
\EEE=\bigoplus_{{\mathbf{k}}\in\ZZ\times\ZZ}\, \EEE_{\mathbf{k}}.
\]

Restricting to such an eigensheaf then gives the commuting diagram
\[ 
\begin{tikzcd}
\EEE_{\mathbf{k}} \arrow[r, swap, "\phi_{\mathbf{i}}"'] \arrow[d, swap, "\psi_{\bm{\mu}}
"]
& e_{\mathbf{i}}^*\EEE_{\mathbf{k}} \arrow[d, "e_{\mathbf{i}}^*\psi_{\bm{\mu}}"] \\ 
\EEE_{\mathbf{k}} \arrow[r, "\bm{\mu}\phi_{\mathbf{i}}"]& e_{\mathbf{i}}^*\EEE_{\mathbf{k}}
\end{tikzcd}
\]

From this we see there are isomorphisms
\[
(e_1^*)^i(e_2^*)^j \EEE_{k_1, \, k_2}\cong  \EEE_{k_1-i, \, k_2-j},
\]
and the eigensheaves are isomorphic to each other under the action of the deck transformations.

Consider one of these eigensheaves, say $\EEE_{\mathbf{k}}$. From the construction, we see that its pushforward is isomorphic to the original $P$-invariant sheaf  $\pr_*\EEE_{\mathbf{k}}\cong \EE$ on $\Fsing$ and $\pr^*\pr_*\EEE_{\mathbf{k}} =\EEE$. Conversely, if $\FFF$ is such that $\pr_*\FFF=\EE$, then by applying the eigenspace decomposition to $\FFF$, $\FFF$ must be a single summand by stability. This implies uniqueness up to translation.

We now establish that for our sheaves of interest, stability is preserved when moving between $\UFs$ and $\Fsing$. We can use the Euler characteristic characterization of stability, Lemma \ref{lemma: nosemistable-euler}. 

Assume $\FF\in \MsingTP$. For any $\FFF\in\Coh(\UFs)$ such that $\pr_*\FFF = \FF$, let $\EEE\subset\FFF$ be a subsheaf. Then its pushforward is a subsheaf of $\FF$,  $\pr_*\EEE \subset\pr_*\FFF =\FF$. By stability of $\FF$, we have $0\geq\chi(\pr_*(\EEE))\Rightarrow 0\geq\chi(\EEE)$. Thus $\FFF$ is also stable.

For the converse, suppose $\FFF\in \MU$ with $\FF=\pr_*\FFF$. From the construction in Proposition \ref{prop:atomic}, $\FF$ is fixed under the action of $P$. Let $\EE\subset\FF$ be a proper subsheaf. Define $\EE_{\bm{\mu}}\coloneqq \EE\otimes L_{\bm{\mu}}$. Since $\deg (L_{\bm{\mu}}) = 0$, the Euler characteristic is independent of $\bm{\mu}$, $\chi(\EE_{\bm{\mu}})=\chi(\EE)$.

This gives a flat family of coherent sheaves over $(\CC^*\times \CC^*)\times \Fsing$, whose restriction to $\bm{\mu} \times \Fsing$ is $\EE_{\bm{\mu}}$. Our $\Fsing$ is proper, so coherent sheaves satisfy the existence part of the valuative criterion. Thus, we have some limiting sheaf $\EE_{\vec{0}}$, which is invariant under the action of $P$. Then by Proposition \ref{prop:atomic}, there is some $\widetilde{\EE_0}\subset\FFF$ such that $\pr_*\widetilde{\EE_0}=\EE_{\vec{0}}$. Since $\FFF$ is assumed stable, $0\geq\chi(\widetilde{\EE_0}) =\chi(\pr_*\widetilde{\EE_0})\Rightarrow 0\geq \chi(\EE_{\vec{0}}) = \chi(\EE)$, so $\FF$ is also stable.
\end{proof}

In Section~\ref{sec:behrend function}, we will show that this action of $P$ also preserves the symmetric obstruction theory of $M$, and calculate the parity of the tangent space dimensions at the fixed points of this action.

\section{Counting Sheaves on $\UFs$}\label{sec: count}

The main result we want to show in this section is the following:
\begin{proposition} Suppose $\FF\in \MU$. Then $\FF\cong\OO_{\CCC}$ for some $T$-invariant curve $\CCC$ with $\chi(\OO_{\CCC})= 1$.
\label{prop:anyrankdeg}
\end{proposition}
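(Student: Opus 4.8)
The plan is to use the torus invariance to turn $\FF$ into a combinatorial object, and then to exploit the rigidity forced by the coefficient of $[C_3]$ being exactly $1$ together with $\chi(\FF)=1$ and stability.

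\emph{Step 1 (support, and rank $1$ along the $C_3$-lift).} Since $\FF$ is $T$-invariant and $(\Supp\pr_*\FF)_{\textsc{red}}\subseteq\Cban$, the reduced support of $\FF$ is a $T$-invariant curve over $\Cban$, hence a finite union of lifts of $C_1,C_2,C_3$. Because $\pr$ is \'{e}tale it preserves multiplicities, so the multiplicities of $\FF$ along the lifts of $C_3$ are positive integers summing to $1$; therefore a \emph{unique} lift $C_3'$ of $C_3$ occurs in $\Supp\FF$, with multiplicity $1$. Equivalently, at the generic point $\eta$ of $C_3'$ one has $\FF_\eta\cong k(C_3')$: $\FF$ is generically reduced and of rank $1$ along $C_3'$. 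I would also record a structural consequence: every vertex of the graph $\pr^{-1}(\Cban)\subset\UFs$ is trivalent with one branch of each type $C_1,C_2,C_3$, so any vertex of $\Supp\FF$ whose $C_3$-branch also lies in $\Supp\FF$ must be one of the two points of $C_3'$; hence $\Supp\FF$ is the $\PP^1=C_3'$ together with at most four ``arms'' of alternating $C_1$- and $C_2$-lifts attached at its two points. Using the local toric coordinates of \S\ref{localbanana} and the toric (Klyachko) description of $T$-equivariant sheaves on each component $\PP^1$, multiplicity $1$ along $C_3'$ moreover forces $\FF$ to have no normal thickening there, so near $\eta$ the sheaf $\FF$ is a line bundle $\OO_{\PP^1}(a_3)$ on the zero section.

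\emph{Step 2 (the cyclic subsheaf generated along $C_3'$).} Choose a section generating $\FF_\eta$ and let $\GG\subseteq\FF$ be the subsheaf it generates, i.e.\ the schematic image of the resulting map $\OO_{\UFs}\to\FF$. Then $\GG\cong\OO_\CCC$ for a $T$-invariant subscheme $\CCC\subseteq\UFs$ with $C_3'\subseteq\CCC$, and $\GG$ agrees with $\FF$ on a dense open subset of $C_3'$. It remains to prove that the quotient $\QQ\coloneqq\FF/\GG$ vanishes; once that is done, $\FF=\GG\cong\OO_\CCC$, and $\chi(\OO_\CCC)=\chi(\FF)=1$ as required.

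\emph{Step 3 (stability kills $\QQ$).} If $\QQ\neq 0$, then $\Supp\QQ$ is a union of $C_1$- and $C_2$-lifts and one can peel off a leaf component $Z$ of $\Supp\FF$: comparing, via the local description of Step 1, the Euler characteristics of the natural sub- and quotient sheaves that add or remove $Z$, and using that a proper subsheaf of the stable $\FF$ has $\chi\le 0$ while a proper quotient has $\chi>0$ (Lemma \ref{lemma: nosemistable-euler}), the constraint $\chi(\FF)=1$ is only compatible with $\FF$ having rank $1$ and no normal thickening along $Z$ as well, being glued to the rest in the unique ``maximally glued'' fashion, and with $\Supp\FF$ having no loops; iterating down the arms forces $\QQ=0$. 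I expect this propagation to be the main obstacle: showing that the rank-$1$, thickening-free rigidity of $\FF$ along $C_3'$ spreads to every component and forbids under-glued vertices and loops. This is precisely where the hypothesis that the $[C_3]$-coefficient equals $1$ is indispensable — as the excerpt notes, for classes $d_1[C_1]+d_2[C_2]+d_3[C_3]$ with all $d_i>1$ there exist stable $\chi=1$ sheaves on $\UFs$ that are not structure sheaves — so the argument cannot be purely formal and must combine the local toric classification of Step 1 with the Euler-characteristic bookkeeping of Lemma \ref{lemma: nosemistable-euler}, which is the content of the combinatorial analysis carried out in the following sections.
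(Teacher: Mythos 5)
Your overall strategy coincides with the paper's: produce a subsheaf of $\FF$ of the form $\OO_{\CCC}$ from a section and then use the Euler-characteristic form of stability (Lemma~\ref{lemma: nosemistable-euler}) to force that subsheaf to be all of $\FF$. The step that makes this work, however, is the inequality $\chi(\OO_{\DDD})\geq 1$ for \emph{every} closed subscheme $\DDD$ of $\Supp\FF$ (the paper's Lemma~\ref{lemma:structure-chi} and Corollary~\ref{cor:subscheme-chi}), and that is precisely what your Step 3 does not establish. Knowing only that a proper subsheaf has $\chi\leq 0$ and a nonzero quotient has $\chi>0$ gives no contradiction by itself: if $\GG\cong\OO_{\CCC}$ is a proper subsheaf, then $\chi(\OO_{\CCC})\leq 0$ and $\chi(\FF/\GG)=1-\chi(\OO_{\CCC})>0$ are perfectly consistent, so stability alone does not kill the quotient. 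The contradiction only appears once one knows a priori that $\chi(\OO_{\CCC})\geq 1$ for the relevant subscheme. Your proposed ``peel off a leaf component'' propagation is a plausible route to this bound, but it is only sketched, and you flag it yourself as ``the main obstacle.'' The paper devotes the remainder of Section~\ref{sec: count} to exactly this point, via the explicit normalization-sequence formula of Lemma~\ref{lemma:masterformula} and the branch-by-branch induction of Lemmas~\ref{lemma:evencase2} and~\ref{lemma:anycurve}, and the extension to arbitrary closed subschemes in Corollary~\ref{cor:subscheme-chi}. As it stands the proposal is a correct reduction plus an unproved key inequality; that is a genuine gap.

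A secondary issue is in Step 2: to obtain a map $\OO_{\UFs}\to\FF$ whose image is $\OO_{\CCC}$ you need a \emph{global} section, and while $\chi(\FF)=1$ guarantees $H^0(\FF)\neq 0$, there is no reason a global section generates the stalk at the generic point of $C_3'$ (it could vanish along $C_3'$ and be supported on the arms). The paper sidesteps this by taking an arbitrary nonzero global section $s$, letting $\CCC_s$ be its support, and applying the subscheme inequality to $\OO_{\CCC_s}$ whatever it is; stability then forces $\OO_{\CCC_s}\cong\FF$ with no need to control where $s$ generates. Your Step 1 is fine and amounts to the paper's normalization of fixing the distinguished edge $e_0$ over $C_3$ and restricting attention to curves of type $\Gamma$.
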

In order to prove Proposition~\ref{prop:anyrankdeg}, we need the following key lemma and its corollary, whose proofs we postpone until later.
\begin{lemma}
Let $\FF\in\MU$ with support curve $\Supp\FF=\CCC$. Then $\chi(\OO_{\CCC})\geq 1.$
\label{lemma:structure-chi}
\end{lemma}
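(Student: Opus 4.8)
The plan is to reduce the statement to a property of the \emph{reduced} curve $\CCC_{\mathrm{red}}$ and then to control the effect of the thickening. First note that $\FF$ is indecomposable: a nontrivial direct sum $\FF_1\oplus\FF_2$ with $\chi(\FF)=1$ always has a summand of positive Euler characteristic, which would be a destabilizing subsheaf, contradicting stability (Lemma~\ref{lemma: nosemistable-euler}, as applied to $\UFs$ in Proposition~\ref{prop:stable_updown}). Hence $\CCC=\Supp\FF$ is connected, and it suffices to show $\chi(\OO_{\CCC})\geq 1$ for this connected proper one-dimensional scheme.

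The geometric core is the claim that $\CCC_{\mathrm{red}}$ is a tree of $\PP^1$'s. Since $\FF\in\MU$, the one-cycle $\Supp(\pr_*\FF)$ has class $\beta_{\mathbf{d}}=d_1[C_1]+d_2[C_2]+[C_3]$, so the coefficient of $[C_3]$ is exactly $1$; tracing through $\pr$ étale, this forces $\CCC$ to contain precisely one lift $E\cong\PP^1$ of $C_3$, reduced, with $\CCC\setminus E\subset\pr^{-1}(C_1\cup C_2)$. By the description of $\UFs$ in Section~\ref{geometry}, the $1$-skeleton of $\UFs$ is the trivalent honeycomb graph of Figure~\ref{fig:hex_tile}, with edges $3$-coloured by $\{C_1,C_2,C_3\}$ according to which banana component they cover; deleting the $C_3$-coloured edges leaves a disjoint union of bi-infinite zig-zag paths, each a tree, and the two endpoints of any $C_3$-edge lie in two \emph{distinct} such paths. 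Therefore $\CCC_{\mathrm{red}}$, being connected and contained in $E$ together with subtrees of two of these paths attached at the ends of $E$, is a tree of $\PP^1$'s; in particular $h^1(\OO_{\CCC_{\mathrm{red}}})=0$ and, since $\CCC_{\mathrm{red}}$ is connected, $\chi(\OO_{\CCC_{\mathrm{red}}})=1$.

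It remains to show the thickening does not decrease the holomorphic Euler characteristic, i.e. that $\chi(\mathcal{N})\geq 0$ for $\mathcal{N}:=\ker(\OO_{\CCC}\twoheadrightarrow\OO_{\CCC_{\mathrm{red}}})$. The plan is to filter $\OO_{\CCC}$ by the powers of the ideal sheaf of $\CCC_{\mathrm{red}}$ and read off $\chi$ from the associated graded. Along each component $C_i\cong\PP^1$ the sheaf $\OO_{\CCC}$ is, by the explicit ``pile of boxes'' normal form for $\CCC$ along $\Cban$ recalled in Section~\ref{localbanana} (Figure~\ref{fig:pile of boxes}), an iterated extension of line bundles $\OO_{\PP^1}(k)$ with $k\geq 0$: the thickenings occur only in the two $\OO(-1)$-fibre directions of $N_{C_i/\UFs}\cong\OO(-1)^{\oplus 2}$, whose conormal directions are $\OO(+1)$'s, so each additional ``fin'' generator twists up. Summing these non-negative component contributions and subtracting the overlap terms at the (possibly thickened) triple points by inclusion--exclusion should then give $\chi(\OO_{\CCC})\geq\chi(\OO_{\CCC_{\mathrm{red}}})=1$, with equality exactly when $\CCC=\CCC_{\mathrm{red}}$ is reduced — the latter refinement being what is needed for Proposition~\ref{prop:anyrankdeg}.

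The main obstacle I anticipate is precisely the bookkeeping in this last step: when the triple points of $\CCC$ are highly thickened, the overlap terms in the inclusion--exclusion are themselves large, and one must verify that they never outweigh the (also large) component contributions. I expect to handle this directly from the vertex normal form $\Spec\CC[x,y,z]/(xyz,x^ry^m,z^nx^b,y^az^s)$ by a length count, deferring the computation to the combinatorial analysis of Section~\ref{sec:combinatorics}. The tree structure in the second step, though conceptually clean, also needs care in pinning down exactly which $C_3$-edges and triple points arise for curves of class $\beta_{\mathbf{d}}$; this is where the hypothesis that the $[C_3]$-coefficient equals $1$ enters essentially, and where the restriction to these curve classes in Theorem~\ref{th:main} originates.
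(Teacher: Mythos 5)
Your setup runs parallel to the paper's: connectedness from stability, the observation that $\CCC_{\textsc{red}}$ is a tree of $\PP^1$'s (hence $\chi(\OO_{\CCC_{\textsc{red}}})=1$) because the coefficient of $[C_3]$ is exactly one, and an edge/vertex inclusion--exclusion for the thickened curve. But there is a genuine gap exactly where you flag ``the main obstacle'': the inequality $\chi(\OO_{\CCC})\geq\chi(\OO_{\CCC_{\textsc{red}}})$ is asserted (``should then give'') and never established, and it is not a formal consequence of the two facts you cite. It is true that each thickened edge pushes forward to a sum of line bundles $\OO_{\PP^1}(k)$ with $k\geq 0$, so each edge contributes $\binom{m+1}{2}+\binom{n+1}{2}-1\geq 1$ (the paper's Lemma~\ref{lemma:masterformula}); but the vertex corrections are of size $mr+sa+bn-1$, quadratic in the thickenings with a product structure, while the edge terms are quadratic with a sum-of-squares structure, and an individual vertex term can vastly exceed $1$. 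Whether $\sum E-\sum V\geq 1$ depends on how the multiplicities of adjacent edges interact along a branch, and the cancellation only works because of the specific propagation of thickenings. The paper's entire proof of this lemma consists of doing that bookkeeping: grouping the edges of each branch into consecutive pairs and rewriting $\chi(\OO_{\CCC})$ as a sum of manifestly non-negative terms such as $\tfrac12(r_i-m_i)(r_i-m_i+1)$ and $\tfrac12(n_{i+1}-s_i)^2$ by induction (Lemma~\ref{lemma:evencase2}), then assembling the four branches (Lemma~\ref{lemma:anycurve}). Deferring this to Section~\ref{sec:combinatorics} also inverts the logical order: that section takes the equality analysis of Lemma~\ref{lemma:evencase2} as input rather than supplying it.

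A second, outright error: your claimed refinement ``equality exactly when $\CCC=\CCC_{\textsc{red}}$'' is false and would contradict the main theorem. Equality $\chi(\OO_{\CCC})=1$ holds for a large family of non-reduced curves, namely those whose outside multiplicities are all $1$ and whose inside multiplicities form, read column-wise, a partition with distinct odd parts (Proposition~\ref{prop:constraint}); these non-reduced curves are precisely what the generating function in Theorem~\ref{th:main} counts. For instance, a single branch of two edges with inside multiplicities $2,1$ and outside multiplicities $1,1$ already gives $\chi(\OO_{\CCC})=1$ with $\CCC$ non-reduced. Moreover, Proposition~\ref{prop:anyrankdeg} does not require the equality characterization at all; what it uses is Corollary~\ref{cor:subscheme-chi}, the lower bound for arbitrary closed subschemes of $\CCC$, which your argument would also need to address separately.
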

\begin{proof}
See Subsection \ref{subsection:lemmaproof}.
\end{proof}
As a Corollary, we have 
\begin{corollary}
Let $\FF\in\MU$ with support curve $\Supp\FF=\CCC$. Let $\DDD$ be any closed subscheme of $\CCC$. Then $\chi(\OO_{\DDD})\geq 1$.
\label{cor:subscheme-chi}
\end{corollary}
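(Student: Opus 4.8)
The plan is to deduce Corollary~\ref{cor:subscheme-chi} from Lemma~\ref{lemma:structure-chi} by two reductions, after which the proof of that lemma applies essentially unchanged. It is worth noting at the outset that closed subschemes $\DDD\subseteq\CCC$ correspond bijectively to ideal sheaves $\mathcal I_{\DDD/\CCC}\subseteq\OO_\CCC$, with $\OO_\DDD=\OO_\CCC/\mathcal I_{\DDD/\CCC}$ the corresponding quotient of the $\OO_\CCC$-module $\OO_\CCC$, and that conversely every quotient sheaf of $\OO_\CCC$ is of this form; hence, granting Lemma~\ref{lemma:structure-chi} in the case $\DDD=\CCC$, Corollary~\ref{cor:subscheme-chi} is equivalent to the statement that $\OO_\CCC$ is stable in the Euler-characteristic sense of Lemma~\ref{lemma: nosemistable-euler}, i.e.\ that $\chi(\mathcal Q)\geq 1$ for every nonzero quotient $\OO_\CCC\twoheadrightarrow\mathcal Q$.

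First I would reduce to the case that $\DDD$ is connected: if $\DDD=\DDD_1\amalg\cdots\amalg\DDD_r$ is the decomposition into connected components, each $\DDD_i$ is again a closed subscheme of $\CCC$ and $\chi(\OO_\DDD)=\sum_i\chi(\OO_{\DDD_i})$, so it suffices to bound each $\chi(\OO_{\DDD_i})$ below by $1$. Next I would reduce to the case that $\DDD$ is $T$-invariant. Since $\FF\in\MU$, the support $\CCC$ is $T$-invariant, and it is a proper --- hence projective --- curve because $\FF$ has finite Euler characteristic. For a generic one-parameter subgroup $\lambda\colon\CC^*\to T$, the flat limit $\DDD_0\coloneqq\lim_{t\to0}\lambda(t)\cdot\DDD$, taken inside the proper scheme $\CCC$, is a $T$-fixed closed subscheme of $\CCC$ with the same Hilbert polynomial as $\DDD$, so $\chi(\OO_{\DDD_0})=\chi(\OO_\DDD)$. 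Combining this with the first reduction, applied now to $\DDD_0$, we may assume that $\DDD\subseteq\CCC$ is a connected, $T$-invariant closed subscheme.

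For such a $\DDD$ I would appeal to the local geometry of Subsection~\ref{localbanana}: $\CCC$ maps into $\Cban$ and, near each of its vertices, has the ``pile of boxes'' form $\Spec\CC[x,y,z]/(xyz, x^r y^m, z^n x^b, y^a z^s)$, while the class condition $[\Supp(\pr_*\FF)]=\beta_{\mathbf{d}}$ in the definition of $\MU$ constrains how $\CCC$ sits over $\Cban$ --- in particular it forces the lifts of $C_3$ occurring in $\CCC$ to be reduced. A connected $T$-invariant closed subscheme $\DDD\subseteq\CCC$ is then cut out locally by a monomial ideal containing that of $\CCC$, so it has pile-of-boxes structure of the same shape with all thickenings bounded by those of $\CCC$, and its image in $\Cban$ is contained in that of $\CCC$; in particular its $C_3$-lifts are again reduced. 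These are the only features of $\CCC$ used in Subsection~\ref{subsection:lemmaproof} to compute $\chi(\OO_\CCC)$ --- where the curve is assembled from its components and thickenings and the Euler characteristic is checked to remain $\geq 1$ at each step --- so that argument runs verbatim with $\DDD$ in place of $\CCC$ and yields $\chi(\OO_\DDD)\geq 1$. The classification of Sections~\ref{sec: count} and \ref{sec:combinatorics} then records which connected $T$-invariant curves over $\Cban$ realize equality.

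The step I expect to require the most care is verifying that the proof of Lemma~\ref{lemma:structure-chi} uses only those properties of $\CCC$ that pass to every $T$-invariant closed subscheme --- the local pile-of-boxes form, the map to $\Cban$, and the constraint along the $C_3$-lifts --- and not, say, that $\CCC$ is the \emph{full} support of a stable sheaf or that it carries the homology class $\beta_{\mathbf{d}}$ exactly. This is the crux: a loop in the dual graph of $\DDD$ would force $\chi(\OO_\DDD)\leq 0$, and it is precisely the way the components of $\UFs$ glue along the edges of the hexagonal tiling, together with the restriction along the $C_3$-lifts, that prevents such loops, so the bookkeeping relating the $C_3$-lifts to the first Betti number of the dual graph has to be done carefully. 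A secondary technical point is the $T$-invariance reduction itself: one must check that $\DDD_0$ indeed lies in $\CCC$ and is fixed by all of $T$, which uses properness of $\CCC$ and genericity of $\lambda$.
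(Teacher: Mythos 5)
Your overall strategy (reduce to a connected, $T$-invariant subscheme and then feed it into the Euler-characteristic machinery of Section~\ref{sec: count}) matches the paper's, and your reduction to $T$-invariant $\DDD$ via the flat limit under a generic one-parameter subgroup is a reasonable way to make explicit a point the paper leaves implicit. But there is a genuine gap in your final step. The claim that the proof of Lemma~\ref{lemma:structure-chi} ``runs verbatim with $\DDD$ in place of $\CCC$'' fails precisely in the case you yourself flag as delicate: a connected $T$-invariant $\DDD\subset\CCC$ need not contain $e_0$ (it can be a chain of edges lying entirely over $C_1$ and $C_2$), and the computation of Lemma~\ref{lemma:evencase2} is anchored at $e_0$ --- the curve is written as $e_0$ with four branches attached, the formula begins with the contribution $\chi(\OO_{e_0})=1$ and subtracts the correction $m_1$ at the attaching vertex, and the induction runs along the branches. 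For a $\DDD$ containing no lift of $C_3$ none of this applies as written, and you give no substitute argument. The paper closes this case with a short but essential trick: form $\DDD'\coloneqq\DDD\cup e_0$ by attaching $e_0$ at a torus-fixed point of valence one, apply Lemma~\ref{lemma:anycurve} to $\DDD'$, and read off $1\leq\chi(\OO_{\DDD'})=\chi(\OO_{\DDD})+1-m_1$ with $m_1\geq 1$, whence $\chi(\OO_{\DDD})\geq m_1\geq 1$.

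A secondary omission: you never reduce to the case that $\OO_{\DDD}$ is pure of dimension one. An arbitrary closed subscheme of $\CCC$ can have isolated or embedded zero-dimensional components, to which the edge-and-vertex bookkeeping does not apply. The paper handles this first via primary decomposition: if $\DDD_1\subset\DDD$ is the maximal pure one-dimensional subscheme, then $0\to K_0\to\OO_{\DDD}\to\OO_{\DDD_1}\to 0$ with $K_0$ zero-dimensional, so $\chi(\OO_{\DDD})\geq\chi(\OO_{\DDD_1})$ and one may replace $\DDD$ by $\DDD_1$. Both fixes are short, but they are needed; as written your argument only covers connected, pure, $T$-invariant subcurves that contain $e_0$.
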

\begin{proof}
See Subsection \ref{subsection:corproof}.
\end{proof}
Using Corollary~\ref{cor:subscheme-chi}, we can prove Proposition~\ref{prop:anyrankdeg}.
\begin{proof}[Proof (of Proposition~\ref{prop:anyrankdeg}).]
Let $\FF\in \MU$ and $\CCC=\Supp\FF$. By hypothesis, $\chi(\FF)=1$ so $\FF$ has a nonzero global section $s$. Let $\II$ be the kernel of the map $s$. Then we have the exact sequence:
\[
0\to\II\to\OO_C\stackrel{s}{\rightarrow}\FF
\]
Let $\CCC_s\subset \CCC$ be the support of $s$. Then $\OO_{\CCC_s} = \OO_{\CCC} /\II$ is a subsheaf of $\FF$,
\[
0\to\OO_{\CCC_s}\to\FF
\]
By Corollary~\ref{cor:subscheme-chi}, $\chi(\OO_{\CCC_s}) \geq1$, which contradicts Lemma~\ref{lemma: nosemistable-euler} unless $\OO_{\CCC_s} \cong \OO_{\CCC} \cong \FF$.
\end{proof}
\subsection{Formula for Euler characteristic.}\label{subsection:master}
Before we present the proof of Lemma~\ref{lemma:structure-chi} and Corollary~\ref{cor:subscheme-chi}, we derive a formula to compute the Euler characteristic of structure sheaves of a certain type of curve in $\UFs$.

Since we are interested in sheaves on $\Xb$ with support in class $\beta=d_1[C_1]+d_2[C_2]+[C_3]$, we can eliminate any isomorphisms induced by the deck transformations on $\UFs$ by fixing a curve that lies over $C_3\subset\Cban$. By Proposition~\ref{prop:atomic}, any point in $\MU$ can be uniquely represented by a sheaf whose support contains this curve. 

In the discusion of Section~\ref{localbanana}, we observed that the formal neighborhood of any irreducible component $e$ that covers $C_i$ is formally locally isomorphic to the total space of $\OO(-1) \oplus \OO(-1) \to \PP^1$. As a consequence, we have a map from $e$ to the reduced curve $e_{\textsc{red}}$ in our geometry.

We record these observations in the terminology that we will use in this section.

\begin{notation}
Suppose $\FF\in\MU$.
\begin{itemize}
\item $e_0\cong\PP^1\subset\UFs$ is a fixed curve such that $\pr_*(e_0)=[C_3]$.
\item $\Gamma\coloneqq\{\CCC \subset \UFs\, |\, \dim\CCC=1,\text{ connected, $T$-fixed, } e_0\subset\CCC, [\pr_*(\CCC)]=d_1[C_1]+d_2[C_2]+[C_3], d_1,d_2\geq 0 \}$, see Figure \ref{fig:gamma}.
\item $\CCC\coloneqq\Supp\FF$ so $[(\pr_*\,\CCC)]=\beta$. Without loss of generality, we will let $\FF$ be such that $\CCC\in\Gamma$.
\item Edges $\{e_i\}$ are possibly nonreduced, irreducible components of $\CCC$.
\item Vertices $\{v_j\}$ are points where two or more components of $\CCC$ intersect. 
\item $\phi:e\to e_{\textsc{red}}\cong\PP^1$ is the map that exists for edges $e$ in our geometry.
\end{itemize}
\end{notation}

\begin{figure}[th!] 
  \centering
	\scalebox{1}{
	\begin{tikzpicture}[y=0.80pt, x=0.80pt, yscale=-1.000000, xscale=1.000000, inner sep=0pt, outer sep=0pt]
\begin{scope}[shift={(0,0)}]
\node at (43,90) {$\scriptstyle e_0$};


	
  \path[fill=black,line join=miter,line cap=butt,line width=0.800pt]
    (0.0000,0.0000) node[above right] (flowRoot4196) {};

  \begin{scope}[shift={(-42.88279,-53.05836)}]
    \path[draw=black,line join=miter,line cap=butt,even odd rule,line width=0.800pt]
      (115.1939,72.9186) -- (100.1939,97.9186) -- (115.1939,122.9186) --
      (100.1939,147.9186) -- (115.1939,172.9186) -- (100.1939,197.9186) --
      (115.1939,222.9186);

    \path[draw=black,line join=miter,line cap=butt,even odd rule,line width=0.800pt]
      (55.5456,222.9186) -- (70.5456,197.9186) -- (55.5456,172.9186) --
      (70.5456,147.9186) -- (55.5456,122.9186) -- (70.5456,97.9186) --
      (55.5456,72.9186)(100.1939,147.9186) -- (70.1939,147.9186);

    \begin{scope}[shift={(-4.24264,-37.47666)}]
      \path[draw=black,fill=black,line join=round,line cap=rect,miter limit=4.00,line
        width=0.200pt] (62.5000,267.6122) circle (0.0176cm);

      \path[draw=black,fill=black,line join=round,line cap=rect,miter limit=4.00,line
        width=0.200pt] (62.5000,277.6122) circle (0.0176cm);

      \path[draw=black,fill=black,line join=round,line cap=rect,miter limit=4.00,line
        width=0.200pt] (62.5000,272.6122) circle (0.0176cm);

    \end{scope}
    \begin{scope}[shift={(48.33274,-38.18376)}]
      \path[draw=black,fill=black,line join=round,line cap=rect,miter limit=4.00,line
        width=0.200pt] (62.5000,267.6122) circle (0.0176cm);

      \path[draw=black,fill=black,line join=round,line cap=rect,miter limit=4.00,line
        width=0.200pt] (62.5000,277.6122) circle (0.0176cm);

      \path[draw=black,fill=black,line join=round,line cap=rect,miter limit=4.00,line
        width=0.200pt] (62.5000,272.6122) circle (0.0176cm);

    \end{scope}
    \begin{scope}[shift={(47.62563,-210.15253)}]
      \path[draw=black,fill=black,line join=round,line cap=rect,miter limit=4.00,line
        width=0.200pt] (62.5000,267.6122) circle (0.0176cm);

      \path[draw=black,fill=black,line join=round,line cap=rect,miter limit=4.00,line
        width=0.200pt] (62.5000,277.6122) circle (0.0176cm);

      \path[draw=black,fill=black,line join=round,line cap=rect,miter limit=4.00,line
        width=0.200pt] (62.5000,272.6122) circle (0.0176cm);

    \end{scope}
    \begin{scope}[shift={(-1.61522,-210.85964)}]
      \path[draw=black,fill=black,line join=round,line cap=rect,miter limit=4.00,line
        width=0.200pt] (62.5000,267.6122) circle (0.0176cm);

      \path[draw=black,fill=black,line join=round,line cap=rect,miter limit=4.00,line
        width=0.200pt] (62.5000,277.6122) circle (0.0176cm);

      \path[draw=black,fill=black,line join=round,line cap=rect,miter limit=4.00,line
        width=0.200pt] (62.5000,272.6122) circle (0.0176cm);

    \end{scope}
    \path[draw=black,fill=black,line join=round,line cap=rect,miter limit=4.00,line
      width=0.389pt] (100.3231,147.5099) circle (0.0343cm);

    \path[draw=black,fill=black,line join=round,line cap=rect,miter limit=4.00,line
      width=0.389pt] (100.6502,197.8065) circle (0.0343cm);

    \path[draw=black,fill=black,line join=round,line cap=rect,miter limit=4.00,line
      width=0.389pt] (114.7685,123.2673) circle (0.0343cm);

    \path[draw=black,fill=black,line join=round,line cap=rect,miter limit=4.00,line
      width=0.389pt] (100.5252,97.4315) circle (0.0343cm);

    \path[draw=black,fill=black,line join=round,line cap=rect,miter limit=4.00,line
      width=0.389pt] (70.0252,98.0565) circle (0.0343cm);

    \path[draw=black,fill=black,line join=round,line cap=rect,miter limit=4.00,line
      width=0.389pt] (55.6502,123.1815) circle (0.0343cm);

    \path[draw=black,fill=black,line join=round,line cap=rect,miter limit=4.00,line
      width=0.389pt] (70.3751,147.8132) circle (0.0343cm);

    \path[draw=black,fill=black,line join=round,line cap=rect,miter limit=4.00,line
      width=0.389pt] (115.2486,172.8635) circle (0.0343cm);

    \path[draw=black,fill=black,line join=round,line cap=rect,miter limit=4.00,line
      width=0.389pt] (70.4002,198.1815) circle (0.0343cm);

    \path[draw=black,fill=black,line join=round,line cap=rect,miter limit=4.00,line
      width=0.389pt] (55.5016,172.6867) circle (0.0343cm);

  \end{scope}
\end{scope}

\end{tikzpicture}
	
	}	
  \caption{Example of a curve $\CCC\subset \UFs$, $\CCC\in\Gamma$}
	\label{fig:gamma}
\end{figure}

We can write the support of $\FF$ as
\[ \CCC\coloneqq\Supp\FF=\bigcup_{ e_i\in \{\text{edges of } \CCC \} } e_i,\] 
where each $e_i$ is a component with a unique torus invariant thickening on $C$ determined by two numbers on each edge, $m_e, n_e$ (see Section \ref{localbanana}), and $(e_i)_{\text{red}} \cong \PP^1$.

The following is a special case of \cite[Lemma~5]{mnop1}. We give a self-contained proof for convenience, and tailor it to our situation and notation to obtain a formula to compute Euler characteristics of structure sheaves of support curves of $\FF\in\MU$.

\begin{lemma}\cite{mnop1}\label{lemma:masterformula}
Let $\CCC$ be a connected pure one dimensional curve in $\Gamma$. Let $\{e_i\}$ be the edges and $\{v_j\}$ the vertices of $\CCC$.

Then $\chi(\OO_{\CCC})$ satisfies 
\begin{equation}
\chi(\OO_{\CCC})= \sum_{e_i}{E(\OO_{\CCC},e_i)}  - \sum_{v_j}{V(\OO_{\CCC},v_j)},
\label{eq:inclusionexclusion}
\end{equation}
where $E(\OO_{\CCC},e_i)$ and $V(\OO_{\CCC},v_j)$ are integer valued functions on the edges and vertices, respectively, and defined as follows:

Given an edge $e$ with thickening lengths $m$ and $n$, the integer $E(\OO_{\CCC},e)$ is given by
\[
E(\OO_{\CCC},e) ={\Bigg[\binom{m+1}{2} + \binom{n+1}{2} - 1\Bigg]}.
\] 

At a vertex $v$ with three incident edges and multiple structure as in Figure~\ref{fig:pile of boxes}, then the integer $V(\OO_{\CCC},v)$ is given by
\[
V(\OO_{\CCC},v) = (mr + sa + bn  - 1).
\] 

If $v$ only has two incident edges, corresponding to, say, the $x$ and $y$ axes with thickenings as in Figure~\ref{fig:pile of boxes}, then
\[
V(\OO_{\CCC},v)= (mr + \min(n,s) -1).
\]
\end{lemma}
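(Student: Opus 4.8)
\emph{Overall strategy.} The plan is to compute $\chi(\OO_\CCC)$ by decomposing $\CCC$ into its primary components — one thickened $\PP^1$ per edge — and correcting for the overcounting at the vertices. Since $\CCC$ is pure one-dimensional, $\OO_\CCC$ has no embedded associated points, so the ideal of $\CCC$ is the intersection of its $\mathfrak{p}_{e_i}$-primary components $Q_{e_i}$, indexed by the edges $e_i$; write $\CCC_{e_i}\coloneqq V(Q_{e_i})$. The product of the restriction maps is then injective, with cokernel supported on the locus where two or more of the $\CCC_{e_i}$ overlap, which by the description of type $\Gamma$ curves in Section~\ref{localbanana} is exactly the finite set of vertices $\{v_j\}$, each having one of the two local forms recorded there. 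Thus
\[
0 \longrightarrow \OO_\CCC \longrightarrow \bigoplus_{e_i} \OO_{\CCC_{e_i}} \longrightarrow \QQ \longrightarrow 0,
\]
with $\QQ=\bigoplus_j \QQ_{v_j}$ of finite length, so $\chi(\QQ)=\sum_j \dim_\CC\QQ_{v_j}$. Additivity of Euler characteristic gives $\chi(\OO_\CCC)=\sum_{e_i}\chi(\OO_{\CCC_{e_i}})-\sum_j\dim_\CC\QQ_{v_j}$, and it remains to show $\chi(\OO_{\CCC_{e_i}})=E(\OO_\CCC,e_i)$ and $\dim_\CC\QQ_{v_j}=V(\OO_\CCC,v_j)$.

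\emph{The edge term.} By Section~\ref{localbanana}, an edge $e$ with thickening lengths $m,n$ is the thickening of $e_{\textsc{red}}\cong\PP^1$ inside $\Tot(\OO(-1)\oplus\OO(-1))$ cut out by the two $T$-invariant sub-line-bundles together with $u^m=v^n=0$ for the two fiber coordinates $u,v$. A fiber coordinate of $\OO(-1)$ is a section of $\OO(1)$, so as an $\OO_{\PP^1}$-module $\OO_{\CCC_e}\cong\OO_{\PP^1}\oplus\bigoplus_{k=1}^{m-1}\OO_{\PP^1}(k)\oplus\bigoplus_{k=1}^{n-1}\OO_{\PP^1}(k)$, the reduced summand being shared by the two directions. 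Using $\chi(\OO_{\PP^1}(k))=k+1$ and summing, $\chi(\OO_{\CCC_e})=\binom{m+1}{2}+\binom{n+1}{2}-1=E(\OO_\CCC,e)$.

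\emph{The vertex term.} This is the technical heart. At a trivalent vertex $v$, Section~\ref{localbanana} identifies $\OO_{\CCC,v}$ locally with $\CC[x,y,z]/(xyz,\,x^ry^m,\,z^nx^b,\,y^az^s)$, whose three primary components localize along the coordinate axes to $V(yz,y^m,z^n)$, $V(xz,x^r,z^s)$, $V(xy,x^b,y^a)$. I would compute $\dim_\CC\QQ_v$ directly from the standard monomial $\CC$-bases: it is the codimension of the image of $\OO_{\CCC,v}$ inside $\bigoplus_i\OO_{\CCC_{e_i},v}$, and one checks it equals $mr+sa+bn-1$, with the $-1$ recording the identification of the three copies of the constant function. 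Equivalently — when the Mayer--Vietoris complex
\[
0 \to \OO_{\CCC,v} \to \bigoplus_i\OO_{\CCC_{e_i},v} \to \bigoplus_{i<j}\OO_{\CCC_{e_i}\cap\CCC_{e_j},\,v} \to \OO_{\CCC_{e_1}\cap\CCC_{e_2}\cap\CCC_{e_3},\,v} \to 0
\]
is exact, as it is here — one gets the same value as $\sum_{i<j}\dim_\CC\OO_{\CCC_{e_i}\cap\CCC_{e_j},v}-\dim_\CC\OO_{\CCC_{e_1}\cap\CCC_{e_2}\cap\CCC_{e_3},v}$, the pairwise lengths being $mr+\min(n,s)-1$ and its cyclic analogues and the triple length $\min(m,a)+\min(n,s)+\min(r,b)-2$. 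The bivalent case follows from the trivalent formula together with the convention that the absent edge is replaced by one of thickening lengths $0$ and $1$ (the $0$ along the axis opposite the edge of larger thickening in the shared plane), which collapses $sa+bn-1$ to $\min(n,s)-1$; it can equally be checked directly by the analogous two-axis monomial count.

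\emph{Main obstacle.} The delicate step is the vertex computation: carrying the monomial bookkeeping through correctly (equivalently, verifying exactness of the Mayer--Vietoris complex for these monomial ideals), and — the most error-prone part — correctly matching the six thickening exponents $a,b,m,n,r,s$ in the local ideal to the geometric thickening lengths $(m_e,n_e)$ of the three incident edges and to the labelling conventions of Figures~\ref{fig:pile of boxes} and~\ref{fig:local p1}, including the convention for empty edges.
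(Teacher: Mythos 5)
Your proposal is correct and follows essentially the same route as the paper: the paper uses the four-term normalization (Mayer--Vietoris) sequence, computes the edge terms by pushing $\OO_e$ forward to $e_{\textsc{red}}\cong\PP^1$ as $\OO\oplus\bigoplus_{k=1}^{m-1}\OO(k)\oplus\bigoplus_{k=1}^{n-1}\OO(k)$, and obtains the vertex corrections by the same monomial box-counting, with pairwise lengths $mr+\min(n,s)-1$ (and cyclic analogues) and triple length $\min(n,s)+\min(r,b)+\min(a,m)-2$. Your repackaging of the vertex correction as the length of the cokernel $\QQ_v$ of $\OO_{\CCC}\to\bigoplus_i\OO_{\CCC_{e_i}}$ is just the first connecting piece of that same complex, so there is no substantive difference.
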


\begin{proof}
This is a computation of Euler characteristic using the normalization sequence, and by pushing forward sheaves on irreducible components to their reduced counterparts.

Consider the normalization sequence, 
\[
0 \to \OO_{\CCC} \to \bigoplus_{i} \OO_{\CCC}|_{e_i} \to \bigoplus_{i\neq j}\OO_{\CCC}|_{e_i \cap e_j} \to \bigoplus_{\substack{i,j,k\\ \text{distinct}}}\OO_{\CCC}|_{e_i\cap e_j\cap e_k} \to 0.
\]
Then 
\[
\chi(\OO_{\CCC}) = \bigoplus_{i} \chi(\OO_{\CCC}|_{e_i}) \,-\, \bigoplus_{i\neq j}\chi(\OO_{\CCC}|_{e_i\cap e_j})\, +  \bigoplus_{\substack{i,j,k\\ \text{distinct}}}\chi(\OO_{\CCC}|_{e_i\cap e_j\cap e_k}).
\]

First, we calculate the Euler characteristic of the restriction of our sheaf to a single edge. Let $e \subset \CCC$ be an edge with thickening lengths $m, n$ and map to the reduced curve, $\phi: e \to e_{\textsc{red}}\cong\PP^1$.

Then $\phi$ has zero dimensional fiber, so
\[\chi(\OO_e) = \chi(\phi^*\OO_{\PP^1}) = \chi(\phi_*\OO_e)\]
by the projection formula.

The normal bundle $\NNN$ of $e$ in $\UFs$ is formally locally isomorphic to a variety affine over $e_{\mathsc{red}}\cong\PP^1$:
\[\NNN\coloneqq\Tot(\OO(-1)\oplus\OO(-1)\to\PP^1),\]
and its sheaf of algebras over $\PP^1$ is given by
\[
\phi_*\OO_{\NNN} = \Sym^*\NNN^{\vee} = \bigoplus_{i,j} \OO(i)\otimes\OO(j) .
\]
So we can think of $\phi_*\OO_e$ as a quotient of $\phi_*\OO_{\NNN}$.

We may represent these summands graphically by boxes in the first quadrant labeled by monomial generators. The quotient sheaf $\phi_*\OO_e$ with lengths $m$ and $n$ along the axes then corresponds to the diagram in Figure~\ref{fig:el}.

\begin{figure}[th!]
  \centering
	\scalebox{1}{
	\begin{tikzpicture}[y=0.80pt, x=0.80pt, yscale=-1.000000, xscale=1.000000, inner sep=0pt, outer sep=0pt]
\begin{scope}[shift={(0,0)}]
\node at (19,20) {\scalebox{1}{$y^{\scriptscriptstyle{m-1}}$}};
\node at (19,50) {\scalebox{1}{$\vdots$}};
\node at (18,86) {\scalebox{1}{$y^{\scriptscriptstyle{2}}$}};
\node at (18,121) {\scalebox{1}{$y^{\phantom{1}}$}};
\node at (18,156) {\scalebox{1}{$1^{\phantom{1}}$}};

\node at (55,156) {$x^{\phantom{1}}$};
\node at (86,156) {$\cdots$};

\node at (122,156) {$x^{\scriptscriptstyle{n-1}}$};

  \path[fill=black,line join=miter,line cap=butt,line width=0.800pt]
    (0.0000,0.0000) node[above right] (flowRoot4196) {};
  \begin{scope}[cm={{0.0,-0.84243,-0.84243,0.0,(176.87678,212.96868)}}]
    \begin{scope}[shift={(-228.86343,-235.5947)}]
      \begin{scope}[shift={(-12.98175,82.69855)}]
        \path[draw=black,fill=black,miter limit=4.00,fill opacity=0.039,line
          width=2.282pt,rounded corners=0.0000cm] (329.9220,320.9198) rectangle
          (369.9220,360.9198);
        \begin{scope}[shift={(40.7094,0)},shift={(0,0)}]
          \path[draw=black,fill=black,miter limit=4.00,fill opacity=0.039,line
            width=2.282pt,rounded corners=0.0000cm] (329.9220,320.9198) rectangle
            (369.9220,360.9198);
        \end{scope}
        \begin{scope}[shift={(81.4188,0)},shift={(0,0)}]
          \path[draw=black,fill=black,miter limit=4.00,fill opacity=0.039,line
            width=2.282pt,rounded corners=0.0000cm] (329.9220,320.9198) rectangle
            (369.9220,360.9198);
        \end{scope}
        \begin{scope}[shift={(122.1282,0)},shift={(0,0)}]
          \path[draw=black,fill=black,miter limit=4.00,fill opacity=0.039,line
            width=2.282pt,rounded corners=0.0000cm] (329.9220,320.9198) rectangle
            (369.9220,360.9198);
        \end{scope}
      \end{scope}
      \begin{scope}[shift={(-272.61673,-11.54293)}]
        \begin{scope}[shift={(218.76651,-27.88672)},shift={(0,0)}]
          \path[draw=black,fill=black,miter limit=4.00,fill opacity=0.039,line
            width=2.282pt,rounded corners=0.0000cm] (329.9220,320.9198) rectangle
            (369.9220,360.9198);
        \end{scope}
        \begin{scope}[shift={(0,40.7094)},shift={(0,0)}]
          \begin{scope}[shift={(218.76651,-27.88672)},shift={(0,0)}]
            \path[draw=black,fill=black,miter limit=4.00,fill opacity=0.039,line
              width=2.282pt,rounded corners=0.0000cm] (329.9220,320.9198) rectangle
              (369.9220,360.9198);
          \end{scope}
        \end{scope}
        \begin{scope}[shift={(0,81.4188)},shift={(0,0)}]
          \begin{scope}[shift={(218.76651,-27.88672)},shift={(0,0)}]
            \path[draw=black,fill=black,miter limit=4.00,fill opacity=0.039,line
              width=2.282pt,rounded corners=0.0000cm] (329.9220,320.9198) rectangle
              (369.9220,360.9198);
          \end{scope}
        \end{scope}
        \begin{scope}[shift={(0,122.1282)},shift={(0,0)}]
          \begin{scope}[shift={(218.76651,-27.88672)},shift={(0,0)}]
            \path[draw=black,fill=black,miter limit=4.00,fill opacity=0.039,line
              width=2.282pt,rounded corners=0.0000cm] (329.9220,320.9198) rectangle
              (369.9220,360.9198);
          \end{scope}
        \end{scope}
      \end{scope}
    \end{scope}
  \end{scope}
\end{scope}

\end{tikzpicture}

	}
  \caption{$\phi_*\OO_e$ breaks up into a sum of line bundles on $\PP^1$.}
	\label{fig:el}
\end{figure}

In other words, 
\[
\phi_*\OO_e = \OO\oplus\bigoplus_{i=1}^{m-1} \OO(i) \oplus \bigoplus_{j=1}^{n-1} \OO(j).
\]
A straightforward application of Riemann-Roch gives, 
\[\chi(\OO_e)=\chi(\phi_*\OO_e) = \left[\binom{m+1}{2} + \binom{n+1}{2} -1 \right].
\]
Then $\bigoplus_{e_i} \chi(\OO_{\CCC}|_{e_i}) = \sum_{e_i}{E(\OO_{\CCC},e_i)}$, where the edge contribution $E(\OO_{\CCC},e)$ to Equation~\ref{eq:inclusionexclusion} is of the required form.

In order to calculate the Euler characteristic contribution from the vertex terms, we need to express the lengths of the module at a vertex in terms of the thickenings of the incident edges. If we depict the multiple structure at a vertex in terms of boxes representing monomial ideals, we must count the number of common boxes shared by pairwise edges and subtract the contribution from boxes common to all three incident edges. 

For example, suppose the vertex $v$ has three incident edges $e_x, e_y, e_z$, with multiple structure labeled as in Figure \ref{fig:pile of boxes}. Then for a given pair of edges, say $e_x, e_y$, the number of boxes they share is
\[
\chi(\OO_{e_x\cap e_y}) = mr - \min(n,s) -1.
\]
For a vertex with three incident edges, the pairwise intersections contribute the following total to the Euler characteristic:
\begin{align*}
&\chi(\OO_{e_x\cap e_y}) + \chi(\OO_{e_y\cap e_z}) + \chi(\OO_{e_z\cap e_x})\\
&= (mr + \min(n,s) -1) + (sa + \min(r,b) -1) + (bn + \min(a,m) -1).
\end{align*}
On the other hand, the boxes that are in the triple intersection have length:
\[
\chi(\OO_{e_x\cap e_y \cap e_z}) = \min(n,s) + \min(r,b) + \min(a,m) - 2.
\]
Subtracting these expressions gives the contribution of a vertex with three edges. 
\[
\chi(\OO_{e_x\cap e_y}) + \chi(\OO_{e_y\cap e_z}) + \chi(\OO_{e_z\cap e_x}) - \chi(\OO_{e_x\cap e_y \cap e_z}) = rm + ns + ab -1.
\]
Hence,
\[
\bigoplus_{i\neq j}\chi(\OO_{\CCC}|_{e_i\cap e_j})\, -  \bigoplus_{\substack{i,j,k\\ \text{distinct}}}\chi(\OO_{\CCC}|_{e_i\cap e_j\cap e_k}) = \sum_{v_j}{V(\OO_{\CCC},v_j)}
\]
if we define the function $V (\OO_{\CCC}, v) = (rm + ns + ab -1)$ when $v$ has three incident edges, or $V (\OO_{\CCC}, v) = (mr - \min(n,s) -1)$ if it has two, as claimed.
\end{proof}

\subsection{Proof of Lemma~\ref{lemma:structure-chi}.}\label{subsection:lemmaproof}

Let $\CCC$ be a connected curve in $\Gamma$ containing $e_0$. Then $\CCC$ naturally breaks up into a union of four branches characterized by their attachment type to $e_0$. Since the Euler characteristic computation on each branch is identical, we will use this decomposition of the curve to simplify the presentation of the proof of Lemma~\ref{lemma:structure-chi}. To this end, we establish the following nomenclature conventions.

\subsubsection{Terminology}
\textit{Branches:} The space $\overline{\CCC\setminus e_0}$ consists of edges that lie over $C_1$ or $C_2 \subset \Cban$. We can write $\overline{\CCC\setminus e_0}$ as a disjoint union of (possibly empty) connected subcurves of four types: 
\[
\overline{\CCC\setminus e_0} = \CCC^{\sbullet}\amalg \CCC_{\sbullet}\amalg {\vphantom{\CCC}}^{\sbullet}\CCC\amalg {\vphantom{\CCC}}_{\sbullet}\CCC
\]
These are distinguished by their attachment to $e_0$. The edge of the subcurve that intersects $e_0$ can cover either $C_1$ or $C_2$, and the intersection vertex can cover $p$ or $q$. For concreteness, we choose the identifications as indicated in Figure~\ref{fig:branchschematic}. We will call any of these four subcurves $\CCC^{\sbullet}, \CCC_{\sbullet}, {\vphantom{\CCC}}^{\sbullet}\CCC$, or ${\vphantom{\CCC}}_{\sbullet}\CCC$, the branches of $\CCC$.

Likewise, the notation $\CCC_{\sbullet}^{\sbullet}$ will mean the subcurve $\CCC_{\sbullet}^{\sbullet}=\CCC^{\sbullet}\cup\CCC_{\sbullet}$ and so forth. Edges and vertices will be decorated as needed to indicate the branch they are on.

\begin{figure}[th!]
  \centering
	\scalebox{1.5}{
	\begin{tikzpicture}[y=0.80pt, x=0.80pt, yscale=-1.000000, xscale=1.000000, inner sep=0pt, outer sep=0pt]

\node at (43,105) {\scalebox{.7}{$\scriptstyle e_0$}};
\node at (15,95) {\scalebox{.7}{$\scriptstyle p_0$}};
\node at (67,95) {\scalebox{.7}{$\scriptstyle q_0$}};

\node at (27,106) {\scalebox{.7}{$\scriptstyle p_0$}};
\node at (60,106) {\scalebox{.7}{$\scriptstyle q_0$}};

\node at (17,123) {\scalebox{.7}{$\scriptstyle p_0$}};
\node at (67,123) {\scalebox{.7}{$\scriptstyle q_0$}};

\node at (82,70) {\scalebox{.7}{$\scriptstyle p_1$}};
\node at (67,44) {\scalebox{.7}{$\scriptstyle q_1$}};

\node at (66,78) {\scalebox{.7}{$\scriptstyle e_{1}^{\sbullet}$}};
\node at (64,58) {\scalebox{.7}{$\scriptstyle e_{2}^{\sbullet}$}};
\node at (64,30) {\scalebox{.7}{$\scriptstyle e_{3}^{\sbullet}$}};
\node at (100,55) {$\CCC^{\sbullet}$};

\node at (100,165) {$\CCC_{\sbullet}$};

\node at (-20,165) {${\vphantom{\CCC}}_{\sbullet}\CCC$};
\node at (-20,55) {${\vphantom{\CCC}}^{\sbullet}\CCC$};

\node at (41,50) {\scalebox{.5}{\rotatebox{90}{inside}}};
\node at (80,50) {\scalebox{.5}{\rotatebox{90}{outside}}};
\node at (0,50) {\scalebox{.5}{\rotatebox{90}{outside}}};
\node at (41,170) {\scalebox{.5}{\rotatebox{90}{inside}}};
\node at (80,170) {\scalebox{.5}{\rotatebox{90}{outside}}};
\node at (0,170) {\scalebox{.5}{\rotatebox{90}{outside}}};

\begin{scope}[shift={(0,0)}]
  \path[fill=black,line join=miter,line cap=butt,line width=0.800pt]
    (0.0000,0.0000) node[above right] (flowRoot4196) {};

  \begin{scope}
    \path[draw=black,dash pattern=on 0.80pt off 0.80pt,line join=miter,line
      cap=butt,miter limit=4.00,even odd rule,line width=0.800pt] (20.3676,95.2820)
      -- (5.3676,70.2820) -- (20.3676,45.2820) -- (5.3676,20.2820);

    \begin{scope}[shift={(-51.79323,-263.49619)}]
      \path[draw=black,fill=black,line join=round,line cap=rect,miter limit=4.00,line
        width=0.200pt] (62.5000,267.6122) circle (0.0176cm);

      \path[draw=black,fill=black,line join=round,line cap=rect,miter limit=4.00,line
        width=0.200pt] (62.5000,277.6122) circle (0.0176cm);

      \path[draw=black,fill=black,line join=round,line cap=rect,miter limit=4.00,line
        width=0.200pt] (62.5000,272.6122) circle (0.0176cm);

    \end{scope}
    \path[draw=black,fill=black,line join=round,line cap=rect,miter limit=4.00,line
      width=0.389pt] (19.8472,45.4200) circle (0.0343cm);

    \path[draw=black,fill=black,line join=round,line cap=rect,miter limit=4.00,line
      width=0.389pt] (5.4722,70.5450) circle (0.0343cm);

    \path[draw=black,fill=black,line join=round,line cap=rect,miter limit=4.00,line
      width=0.389pt] (20.1971,95.1767) circle (0.0343cm);

  \end{scope}
  \begin{scope}[shift={(1.12065,8.12784)}]
    \path[draw=black,fill=black,line join=round,line cap=rect,miter limit=4.00,line
      width=0.389pt] (56.1218,102.2158) circle (0.0343cm);

    \path[draw=black,line join=miter,line cap=butt,even odd rule,line width=0.800pt]
      (55.9926,102.2158) -- (25.9926,102.2158);

    \path[draw=black,fill=black,line join=round,line cap=rect,miter limit=4.00,line
      width=0.389pt] (25.8003,102.2158) circle (0.0343cm);

  \end{scope}
  \begin{scope}
    \path[draw=black,dash pattern=on 1.60pt off 1.60pt,line join=miter,line
      cap=butt,miter limit=4.00,even odd rule,line width=0.800pt] (22.7831,122.6173)
      -- (7.7831,147.6173) -- (22.7831,172.6173) -- (7.7831,197.6173);

    \path[xscale=1.000,yscale=-1.000,draw=black,fill=black,dash pattern=on 0.78pt
      off 0.78pt,line join=round,line cap=rect,miter limit=4.00,line width=0.389pt]
      (22.2627,-172.4793) circle (0.0343cm);

    \path[xscale=1.000,yscale=-1.000,draw=black,fill=black,dash pattern=on 0.78pt
      off 0.78pt,line join=round,line cap=rect,miter limit=4.00,line width=0.389pt]
      (7.8877,-147.3543) circle (0.0343cm);

    \path[xscale=1.000,yscale=-1.000,draw=black,fill=black,dash pattern=on 0.78pt
      off 0.78pt,line join=round,line cap=rect,miter limit=4.00,line width=0.389pt]
      (22.6126,-122.7226) circle (0.0343cm);

    \begin{scope}[shift={(-51.38395,-63.33641)}]
      \path[draw=black,fill=black,line join=round,line cap=rect,miter limit=4.00,line
        width=0.200pt] (62.5000,267.6122) circle (0.0176cm);

      \path[draw=black,fill=black,line join=round,line cap=rect,miter limit=4.00,line
        width=0.200pt] (62.5000,277.6122) circle (0.0176cm);

      \path[draw=black,fill=black,line join=round,line cap=rect,miter limit=4.00,line
        width=0.200pt] (62.5000,272.6122) circle (0.0176cm);

    \end{scope}
  \end{scope}
  \begin{scope}
    \path[draw=black,dash pattern=on 1.60pt off 0.80pt on 0.40pt off 0.80pt,line
      join=miter,line cap=butt,miter limit=4.00,even odd rule,line width=0.800pt]
      (61.9952,122.9050) -- (76.9952,147.9050) -- (61.9952,172.9050) --
      (76.9952,197.9050);

    \path[scale=-1.000,draw=black,fill=black,dash pattern=on 0.39pt off 1.17pt,line
      join=round,line cap=rect,miter limit=4.00,line width=0.389pt]
      (-62.5157,-172.7671) circle (0.0343cm);

    \path[scale=-1.000,draw=black,fill=black,dash pattern=on 0.39pt off 1.17pt,line
      join=round,line cap=rect,miter limit=4.00,line width=0.389pt]
      (-76.8907,-147.6421) circle (0.0343cm);

    \path[scale=-1.000,draw=black,fill=black,dash pattern=on 0.39pt off 1.17pt,line
      join=round,line cap=rect,miter limit=4.00,line width=0.389pt]
      (-62.1658,-123.0104) circle (0.0343cm);

    \begin{scope}[shift={(9.42618,-64.03683)}]
      \path[draw=black,fill=black,line join=round,line cap=rect,miter limit=4.00,line
        width=0.200pt] (62.5000,267.6122) circle (0.0176cm);

      \path[draw=black,fill=black,line join=round,line cap=rect,miter limit=4.00,line
        width=0.200pt] (62.5000,277.6122) circle (0.0176cm);

      \path[draw=black,fill=black,line join=round,line cap=rect,miter limit=4.00,line
        width=0.200pt] (62.5000,272.6122) circle (0.0176cm);

    \end{scope}
  \end{scope}
  \begin{scope}
    \path[draw=black,dash pattern=on 0.20pt off 0.20pt,line join=miter,line
      cap=butt,miter limit=4.00,even odd rule,line width=0.800pt] (61.5510,94.9942)
      -- (76.5510,69.9942) -- (61.5510,44.9942) -- (76.5510,19.9942);

    \path[xscale=-1.000,yscale=1.000,draw=black,fill=black,line join=round,line
      cap=rect,miter limit=4.00,line width=0.389pt] (-62.0715,45.1322) circle
      (0.0343cm);

    \path[xscale=-1.000,yscale=1.000,draw=black,fill=black,line join=round,line
      cap=rect,miter limit=4.00,line width=0.389pt] (-76.4465,70.2572) circle
      (0.0343cm);

    \path[xscale=-1.000,yscale=1.000,draw=black,fill=black,line join=round,line
      cap=rect,miter limit=4.00,line width=0.389pt] (-61.7216,94.8889) circle
      (0.0343cm);

    \begin{scope}[shift={(8.61009,-263.75285)}]
      \path[draw=black,fill=black,line join=round,line cap=rect,miter limit=4.00,line
        width=0.200pt] (62.5000,267.6122) circle (0.0176cm);

      \path[draw=black,fill=black,line join=round,line cap=rect,miter limit=4.00,line
        width=0.200pt] (62.5000,277.6122) circle (0.0176cm);

      \path[draw=black,fill=black,line join=round,line cap=rect,miter limit=4.00,line
        width=0.200pt] (62.5000,272.6122) circle (0.0176cm);

    \end{scope}
  \end{scope}
\end{scope}

\end{tikzpicture}

	}
  \caption{Schematic diagram of $\CCC$}
	\label{fig:branchschematic}
\end{figure}

\textit{Numbering}: Let $|\CCC|$ denote the number of edges of any curve $\CCC$. We number the consecutive edges of each branch in increasing order away from $e_0$ and group them in \textit{consecutive pairs}, labeled as $(e_{2k-1}, e_{2k})$, $k\geq1$.

\textit{Thickening}: Recall that as a consequence of Proposition~$\ref{prop:tfixcount}$, the sheaves we are interested in have scheme-theoretic support in the surface $\Fsing$. Thus, any thickening of support curves for sheaves in $\MU$ will occur in the surface $\UFs$. The fixed edge $e_0$ is the intersection of two irreducible surface components of $\UFs$. Let $S$ be one of these irreducible surface components containing $e_0$, and let $g$ be the deck transformation which translates $S$ into the other component containing $e_0$. Then $g$ generates a $\ZZ$ subgroup of the deck transformations, $\langle g\rangle\cong\ZZ\subset\ZZ\times\ZZ$. We define the \textit{inside hexagons} as those irreducible surface components of $\UFs$ that are in the orbit of $S$ under the action of $\langle g\rangle$. Any other irreducible surface components will be called \textit{outside hexagons}. 

Every edge of $\CCC$, apart from $e_0$, is the intersection of an inside hexagon and an outside hexagon, and can be thickened in either of these two directions. We will call an edge thickening in the direction of the inside hexagon surface the \textit{inside} direction. The thickening of an edge in the outside hexagon surface direction will be called the \textit{outside} direction. 

\textit{One branch detail:} We choose one branch, say $\CCC^{\sbullet}$, for detailed computations, Figure~\ref{fig:branchdetail}. Here, we will denote the lengths of the multiple structure on edges $e_{2i-1}^{\sbullet}$ by $m_i$ on the inside and $n_i$ on the outside. Edges $e_{2i}^{\sbullet}$ will have multiple structures of lengths $r_i$ on the inside and $s_i$ on the outside. The vertices will be numbered so that $p_i=e_{2i-1} \cap e_{2i}$ and $q_i=e_{2i} \cap e_{2i+1}$. 

\begin{figure}[th!]
  \centering
	\scalebox{1.5}{
	\begin{tikzpicture}[y=0.80pt, x=0.80pt, yscale=-1.000000, xscale=1.000000, inner sep=0pt, outer sep=0pt]

\node at (50,95) {$\CCC^{\sbullet}$};

\node at (40,160) {\scalebox{1.1}{$\scriptstyle e_0$}};

\node at (74,150) {\scalebox{1.1}{$\scriptstyle e_1^{\sbullet}$}};
\node at (105,98) {\scalebox{1.1}{$\scriptstyle e_2^{\sbullet}$}};
\node at (71,41) {\scalebox{1.1}{$\scriptstyle e_3^{\sbullet}$}};

\node at (82,166) {\scalebox{.9}{$\scriptstyle q_0$}};
\node at (114,113) {\scalebox{.9}{$\scriptstyle p_1$}};
\node at (82,57) {\scalebox{.9}{$\scriptstyle q_1$}};

\node at (90,122) {\scalebox{.7}{\rotatebox{60}{$\scriptstyle m_1=5$}}};
\node at (100,135) {\scalebox{.7}{\rotatebox{60}{$\scriptstyle n_1=3$}}};

\node at (73,75) {\scalebox{.7}{\rotatebox{-60}{$\scriptstyle r_1=3$}}};
\node at (90,74) {\scalebox{.7}{\rotatebox{-60}{$\scriptstyle s_1=2$}}};

\node at (89,16) {\scalebox{.7}{\rotatebox{60}{$\scriptstyle m_2=1$}}};
\node at (98,25) {\scalebox{.7}{\rotatebox{60}{$\scriptstyle n_2=1$}}};

\begin{scope}[shift={(0,0)}]
  \path[fill=black,line join=miter,line cap=butt,line width=0.800pt]
    (0.0000,0.0000) node[above right] (flowRoot4196) {};

  \begin{scope}[shift={(-2.23262,-36.83816)}]
    \path[draw=black,fill=black,line join=round,line cap=rect,miter limit=4.00,line
      width=0.845pt] (74.6116,203.7468) circle (0.0745cm);

    \path[draw=black,line join=miter,line cap=butt,even odd rule,line width=1.739pt]
      (74.3307,203.7468) -- (9.1025,203.7468);

    \path[draw=black,fill=black,line join=round,line cap=rect,miter limit=4.00,line
      width=0.845pt] (8.6844,203.7468) circle (0.0745cm);

    \path[draw=black,dash pattern=on 0.43pt off 0.43pt,line join=miter,line
      cap=butt,miter limit=4.00,even odd rule,line width=1.739pt] (74.4119,203.9608)
      -- (107.0260,149.6040) -- (74.4119,95.2472) -- (107.0260,40.8903);

    \path[xscale=-1.000,yscale=1.000,draw=black,fill=black,line join=round,line
      cap=rect,miter limit=4.00,line width=0.845pt] (-75.5434,95.5471) circle
      (0.0745cm);

    \path[xscale=-1.000,yscale=1.000,draw=black,fill=black,line join=round,line
      cap=rect,miter limit=4.00,line width=0.845pt] (-106.7986,150.1757) circle
      (0.0745cm);

    \path[xscale=-1.000,yscale=1.000,draw=black,fill=black,line join=round,line
      cap=rect,miter limit=4.00,line width=0.845pt] (-74.7827,203.7318) circle
      (0.0745cm);

    \begin{scope}[cm={{0.5,-0.86603,0.86603,0.5,(-76.14749,114.03488)}}]
      \path[draw=black,fill=black,line join=round,line cap=rect,miter limit=4.00,fill
        opacity=0.294,line width=0.800pt,rounded corners=0.0000cm] (113.8481,123.0687)
        rectangle (118.8994,126.6893);

    \end{scope}
    \begin{scope}[cm={{0.5,-0.86603,0.86603,0.5,(-79.24125,106.28891)}}]
      \path[draw=black,fill=black,line join=round,line cap=rect,miter limit=4.00,fill
        opacity=0.294,line width=0.800pt,rounded corners=0.0000cm] (113.8481,123.0687)
        rectangle (118.8994,126.6893);

    \end{scope}
    \begin{scope}[cm={{0.5,0.86603,-0.86603,0.5,(125.83798,-44.86913)}}]
      \path[draw=black,fill=black,line join=round,line cap=rect,miter limit=4.00,fill
        opacity=0.294,line width=0.797pt] (122.5638,118.3263) -- (122.5638,115.6416)
        -- (127.8240,115.6416) -- (127.8240,118.3263) -- (127.8240,121.0110) --
        (127.8240,123.6957) -- (127.8240,126.3803) -- (122.5638,126.3803) --
        (122.5638,123.6957) -- (122.5638,121.0110) -- (122.5638,118.3263);

      \path[draw=black,line join=round,line cap=rect,miter limit=4.00,line
        width=0.640pt,rounded corners=0.0000cm] (122.7192,119.2915) rectangle
        (127.7887,122.6836);

    \end{scope}
    \begin{scope}[cm={{0.5,-0.86603,0.86603,0.5,(-86.09611,227.47139)}}]
      \path[draw=black,fill=black,line join=round,line cap=rect,miter limit=4.00,fill
        opacity=0.294,line width=0.800pt] (130.3187,113.2986) -- (130.3187,108.8649)
        -- (135.3720,108.8649) -- (135.3720,113.2986) -- (135.3720,117.7322) --
        (135.3720,122.1659) -- (135.3720,126.5995) -- (130.3187,126.5995) --
        (130.3187,122.1659) -- (130.3187,117.7322) -- (130.3187,113.2986);

      \path[draw=black,line join=round,line cap=rect,miter limit=4.00,line
        width=0.640pt,rounded corners=0.0000cm] (130.3707,119.5154) rectangle
        (135.4401,122.9075);

      \path[draw=black,line join=round,line cap=rect,miter limit=4.00,line
        width=0.640pt,rounded corners=0.0000cm] (130.2810,112.4392) rectangle
        (135.3324,116.0598);

    \end{scope}
    \begin{scope}[cm={{0.5,-0.86603,0.86603,0.5,(-73.96964,232.66995)}}]
      \path[draw=black,fill=black,line join=round,line cap=rect,miter limit=4.00,fill
        opacity=0.294,line width=0.797pt] (122.5638,118.3263) -- (122.5638,115.6416)
        -- (127.8240,115.6416) -- (127.8240,118.3263) -- (127.8240,121.0110) --
        (127.8240,123.6957) -- (127.8240,126.3803) -- (122.5638,126.3803) --
        (122.5638,123.6957) -- (122.5638,121.0110) -- (122.5638,118.3263);

      \path[draw=black,line join=round,line cap=rect,miter limit=4.00,line
        width=0.640pt,rounded corners=0.0000cm] (122.7192,119.2915) rectangle
        (127.7887,122.6836);

    \end{scope}
    \begin{scope}[cm={{0.5,0.86603,-0.86603,0.5,(117.38048,-35.95117)}}]
      \path[draw=black,fill=black,line join=round,line cap=rect,miter limit=4.00,fill
        opacity=0.294,line width=0.800pt] (125.7561,95.3499) -- (125.7561,93.5594) --
        (131.1892,93.5594) -- (131.1892,95.3499) -- (131.1892,97.1405) --
        (131.1892,98.9310) -- (131.1892,100.7215) -- (125.7561,100.7215) --
        (125.7561,98.9310) -- (125.7561,97.1405) -- (125.7561,95.3499);

      \path[draw=black,line join=round,line cap=rect,miter limit=4.00,line
        width=0.640pt,rounded corners=0.0000cm] (125.9980,93.5462) rectangle
        (131.0674,96.9383);

    \end{scope}
  \end{scope}
\end{scope}

\end{tikzpicture}

	}
  \caption{Example detail of $e_0\cup\CCC^{\sbullet}$}
	\label{fig:branchdetail}
\end{figure}

\textit{Empty edge:}
To make our formulas uniform, we will adopt the convention that an empty edge of $\CCC^{\sbullet}$ will have inside multiplicity of 0, and outside multiplicity of 1. Also, if there are an odd number of edges in any branch so that the last of the consecutive pairs only contains a single element, $(e_{2\alpha-1}^{\sbullet}, - )$, then we will append an empty edge to complete the pair.

\subsubsection{Euler characteristic of structure sheaf}
Now that we have the notation in place, we first derive an expression for the Euler characteristic of the structure sheaf of a curve with only one nonempty branch, $\chi(\OO_{\CCC})$ where $\CCC=e_0\cup\CCC^{\sbullet}$, and show that it is bounded below by 1. Furthermore, we will see the restrictions that equality imposes on the multiple structures $m_i,r_i,n_i,s_i$ that can appear in such a curve. 

\begin{lemma} Let $\CCC = e_0\cup\CCC^{\sbullet}$, with $\emptyset\neq\CCC^{\sbullet}$ and $|\CCC^{\sbullet}|=2\alpha$ or $2\alpha-1$ for some positive integer $\alpha$. If $|\CCC^{\sbullet}|$ is odd then we append an empty edge to $\CCC^{\sbullet}$ in the formula below.

Then the Euler characteristic $\chi(\OO_{\CCC})$ satisfies the following equality:
\begin{align}
\chi(\OO_{\CCC})&  =   \nonumber \\
&\frac{1}{2}n_1^2
+\frac{1}{2} \sum_{i=1}^{\alpha}{\left( (r_i - m_i)(  r_i - m_i +1) \right)} \nonumber \\
& +\frac{1}{2} \sum_{i=1}^{\alpha-1}{ (n_{i+1} - s_i)^2}  
+\frac{1}{2} \sum_{i=1}^{\alpha}{ (n_i + s_i -2\min(n_i,s_i))}  \nonumber \\
& +\sum_{i=2}^{\alpha}{\left(m_{i} - \min(r_{i-1},m_{i})\right)} + \frac{1}{2}s_{\alpha}^2
\nonumber \\
&\label{eq:evencase2}
\end{align}
In particular, 
$\chi(\OO_{\CCC}) \geq 1$ with equality if and only if $n_1 = s_{\alpha} =1$ and all the summation terms are zero.
\label{lemma:evencase2}
\end{lemma}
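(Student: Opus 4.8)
The plan is to evaluate $\chi(\OO_{\CCC})$ directly from the master formula of Lemma~\ref{lemma:masterformula}, $\chi(\OO_{\CCC})=\sum_{e_i}E(\OO_{\CCC},e_i)-\sum_{v_j}V(\OO_{\CCC},v_j)$, and then to reorganize the resulting polynomial in the thickening lengths into the claimed shape. First I would tabulate the data. The edges of $\CCC=e_0\cup\CCC^{\sbullet}$ are: the distinguished reduced edge $e_0$, which contributes $E(\OO_{\CCC},e_0)=\binom{2}{2}+\binom{2}{2}-1=1$; the odd edges $e_{2i-1}^{\sbullet}$ with inside/outside thickenings $m_i,n_i$; and the even edges $e_{2i}^{\sbullet}$ with thickenings $r_i,s_i$ for $i=1,\dots,\alpha$, where the conventions of Section~\ref{localbanana} give the appended empty edge (present when $|\CCC^{\sbullet}|$ is odd) the values $r_\alpha=0$, $s_\alpha=1$. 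Since $\CCC$ is a single zigzag path, every vertex is bivalent; the vertices are $q_0=e_0\cap e_1^{\sbullet}$, the within-pair vertices $p_i=e_{2i-1}^{\sbullet}\cap e_{2i}^{\sbullet}$ for $i=1,\dots,\alpha$, and the between-pair vertices $q_i=e_{2i}^{\sbullet}\cap e_{2i+1}^{\sbullet}$ for $i=1,\dots,\alpha-1$.

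Next I would read the vertex contributions off the local model of $\Cban$ (Figure~\ref{fig:pile of boxes}, two-edge case). At $p_i$ the two edges are matched along their common (inside) face, so $V(\OO_{\CCC},p_i)=m_ir_i+\min(n_i,s_i)-1$; at $q_i$ they are matched along the faces carrying the outside thickenings, so $V(\OO_{\CCC},q_i)=n_{i+1}s_i+\min(r_i,m_{i+1})-1$; and since $e_0$ is reduced, the degenerate boundary term is $V(\OO_{\CCC},q_0)=m_1$. Substituting everything into the master formula and writing $\binom{k+1}{2}=\tfrac12 k^2+\tfrac12 k$ reduces the statement to an algebraic identity in the integers $m_i,r_i,n_i,s_i$. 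The reorganization rests on the two elementary identities $\binom{m+1}{2}+\binom{r+1}{2}-mr=\tfrac12(r-m)(r-m+1)+m$ and $\binom{n+1}{2}+\binom{s+1}{2}-ns=\tfrac12(n-s)^2+\tfrac12(n+s)$. Applying the first to each consecutive pair $e_{2i-1}^{\sbullet},e_{2i}^{\sbullet}$ together with $V(\OO_{\CCC},p_i)$ produces the term $\tfrac12(r_i-m_i)(r_i-m_i+1)$ with a leftover $+m_i$; the leftover $m_i$ for $i\ge 2$ then absorbs the $-\min(r_{i-1},m_i)$ coming out of $V(\OO_{\CCC},q_{i-1})$ into $m_i-\min(r_{i-1},m_i)$, and the leftover $m_1$ cancels $V(\OO_{\CCC},q_0)=m_1$. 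Applying the second identity to the pairs $s_i,n_{i+1}$ together with $V(\OO_{\CCC},q_i)$ produces $\tfrac12(n_{i+1}-s_i)^2$ plus a linear remainder; assembling all such linear remainders with the $-\min(n_i,s_i)$ from the $p_i$ and with the unconsumed quadratics $\binom{n_1+1}{2}$ and $\binom{s_\alpha+1}{2}$ at the two ends of the chain of outside thicknesses gives $\tfrac12 n_1^2+\tfrac12 s_\alpha^2+\tfrac12\sum_{i=1}^{\alpha-1}(n_{i+1}-s_i)^2+\tfrac12\sum_{i=1}^{\alpha}(n_i+s_i-2\min(n_i,s_i))$. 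A final count of the constants ($+1$ from $e_0$, $-1$ from each of the $2\alpha$ remaining edges, $+1$ from each of the $2\alpha$ vertices carrying a $-1$) shows they cancel; this yields exactly Equation~\ref{eq:evencase2}.

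The lower bound is then immediate from \ref{eq:evencase2}: every summand is a nonnegative integer multiple of $\tfrac12$, since $k(k+1)\ge 0$ for all $k\in\ZZ$ with equality exactly for $k\in\{0,-1\}$, $(n_{i+1}-s_i)^2\ge 0$, $n_i+s_i-2\min(n_i,s_i)=|n_i-s_i|\ge 0$, and $m_i-\min(r_{i-1},m_i)=\max(0,m_i-r_{i-1})\ge 0$; and the two extreme terms satisfy $\tfrac12 n_1^2\ge\tfrac12$ and $\tfrac12 s_\alpha^2\ge\tfrac12$ because $n_1,s_\alpha\ge 1$. Hence $\chi(\OO_{\CCC})\ge 1$, with equality precisely when $n_1=s_\alpha=1$ and every summation term vanishes, which is the stated characterization. (As sanity checks, the chain of one, two, or three reduced $\PP^1$'s gives $\chi=1$, and for an odd-length branch the appended empty edge contributes $0$ to $\sum E$ and $0$ at $p_\alpha$, consistently.)

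I expect the real work to lie in the middle paragraph, in two places: (i) correctly matching the thickening lengths $m,r,n,s$ to the slots of the two-edge vertex formula at $q_0$, at each $p_i$, and at each $q_i$ — i.e.\ keeping track of which $2$-plane is shared at each vertex, which is exactly what the inside/outside terminology of Section~\ref{localbanana} encodes — while handling the empty-edge and odd-length conventions without a sign slip; and (ii) carrying out the regrouping into completed-square form. Once Equation~\ref{eq:evencase2} is established, the inequality and the equality case are one-line consequences.
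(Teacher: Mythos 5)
Your proposal is correct and takes essentially the same route as the paper: both substitute the edge and vertex contributions of Lemma~\ref{lemma:masterformula} (with the identical empty-edge convention $r_\alpha=0$, $s_\alpha=1$ when $|\CCC^{\sbullet}|$ is odd) into the master formula and then rearrange the result into the manifestly non-negative form of Eq.~(\ref{eq:evencase2}), from which the inequality and the equality case are immediate. The only difference is organizational — the paper verifies the rearrangement by induction on $\alpha$ while you complete the squares in a single pass — and your two regrouping identities do reproduce the stated formula exactly (the cross terms and constants cancel as you claim, modulo the harmless bookkeeping ambiguity of whether $q_0$ is counted among the vertices carrying a $-1$).
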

\begin{proof}
The last statement follows since $n_1,s_{\alpha}\geq 1$ and all the other summands are non-negative. Note that the second summation is always non-negative since the two factors of each summand never have opposite signs.

We will prove Eq.~(\ref{eq:evencase2}) using induction on $\alpha$, and a rearrangement of the formula in Lemma~\ref{lemma:masterformula}.

First, suppose $|\CCC^{\sbullet}|=2\alpha$. The formula in Lemma~\ref{lemma:masterformula} for $\chi(\OO_{\CCC})$ becomes:
\begin{align}
\chi(\OO_{\CCC})=& \nonumber  \\
1+\sum_{i=1}^{\alpha}&{\left(\binom{m_i+1}{2} + \binom{n_i+1}{2} -1\right)} + \sum_{i=1}^{\alpha}{\left(\binom{r_i+1}{2} + \binom{s_i+1}{2} -1 \right)  } \nonumber\\
-m_1-&\sum_{i=1}^{\alpha}{\left(m_ir_i + \min(n_i,s_i) -1\right)} - \sum_{i=1}^{\alpha-1}{\left(s_i n_{i+1}+ \min(r_i,m_{i+1}) -1 \right)} \nonumber \\
&\label{eq:evencase}
\end{align}
The summation terms are contributions from the $e_{2i-1}$ and $e_{2i}$ edges and vertex corrections from the ${p_i}$ and ${q_i}$, respectively.

In the odd case of $|\CCC^{\sbullet}|=2\alpha-1$, the formula in Lemma~\ref{lemma:masterformula} becomes:
\begin{align}
\chi(\OO_{\CCC})&= \nonumber  \\
1+\sum_{i=1}^{\alpha}&{\left(\binom{m_i+1}{2} + \binom{n_i+1}{2} -1\right)} + \sum_{i=1}^{\alpha-1}{\left(\binom{r_i+1}{2} + \binom{s_i+1}{2} -1 \right)  } \nonumber\\
- m_1-&\sum_{i=1}^{\alpha-1}{\left(m_ir_i + \min(n_i,s_i) -1\right)} - \sum_{i=1}^{\alpha-1}{\left(s_i n_{i+1}+ \min(r_i,m_{i+1}) -1 \right)} \nonumber \\
&\label{eq:oddcase}
\end{align}

If we append an empty edge to $\CCC$ in this odd case, our convention dictates that we define $r_{\alpha}=0$ and $s_{\alpha}=1$. Then, we can rewrite Eq.~(\ref{eq:oddcase}) as:
\begin{align}
\chi(\OO_{\CCC_{2\alpha}})=& \nonumber  \\
1+\sum_{i=1}^{\alpha}&{\left(\binom{m_i+1}{2} + \binom{n_i+1}{2} -1\right)} + \sum_{i=1}^{\alpha}{\left(\binom{r_i+1}{2} + \binom{s_i+1}{2} -1 \right)  } \nonumber\\
-m_1-&\sum_{i=1}^{\alpha}{\left(m_ir_i + \min(n_i,s_i) -1\right)} - \sum_{i=1}^{\alpha-1}{\left(s_i n_{i+1}+ \min(r_i,m_{i+1}) -1 \right)} \nonumber \\
\end{align}

This is now exactly the same as the even case, Eq.~(\ref{eq:evencase}). So from here, we will assume $|\CCC^{\sbullet}|$ is even, with empty edge appended, if needed, and in either case satisfies eq.~(\ref{eq:evencase}).

To begin the induction, when $\alpha = 1$, Eq.~(\ref{eq:evencase}) reduces to
\begin{align*}
\chi(\OO_{\CCC_2}) &= 1+ \binom{m_1+1}{2} + \binom{n_1+1}{2} -1 \\
 &+ \binom{r_1+1}{2} + \binom{s_1+1}{2} -1  \\
&- m_1-\left(m_1r_1 + \min(n_1,s_1) -1\right)\\
\\
&= \frac{1}{2}n_1^2 + \frac{1}{2}(r_1-m_1)(r_1-m_1+1) \\  
&+ \frac{1}{2}(n_1+s_1-2\min(n_1,s_1))
+ \frac{1}{2}s_{1}^2,
\end{align*}
which satisfies Eq.~(\ref{eq:evencase2}).

Now suppose Eq.~(\ref{eq:evencase2}) is true for all $\CCC= e_0\cup\CCC^{\sbullet}$ with $|\CCC^{\sbullet}|=2k$ and $1\leq k\leq\alpha$. Then for any $\CCC= e_0\cup\CCC^{\sbullet}$ with $|\CCC^{\sbullet}| = 2\alpha+2$, we can write this as a union $\CCC = \CCC_{2\alpha}\cup\CCC^{\sbullet}_2$ where $\CCC_{2\alpha}=e_0\cup\CCC^{\sbullet}_{2\alpha}$ contains the first $2\alpha$ edges of $\CCC^{\sbullet}$ and $\CCC^{\sbullet}_2$ are the remaining edges of $\CCC^{\sbullet}$. Then we have from eq.~(\ref{eq:evencase}),
\begin{align*}
\chi(\OO_{\CCC})&=  \chi(\OO_{\CCC_{2\alpha}}) 
+  \binom{m_{\alpha+1}+1}{2} +
   \binom{n_{\alpha+1}+1}{2} -1 \\
&+ \binom{r_{\alpha+1}+1}{2} + \binom{s_{\alpha+1}+1}{2} -1 \\
&-\left(s_{\alpha}n_{\alpha+1} + \min(r_{\alpha},m_{\alpha+1}) -1\right)\\
&-\left(m_{\alpha+1}r_{\alpha+1} + \min(n_{\alpha+1},s_{\alpha+1}) -1\right)\\
\\
&= \chi(\OO_{\CCC_{2\alpha}}) - \frac{1}{2}s_{\alpha}^2 
+ \frac{1}{2}(r_{\alpha+1} -m_{\alpha+1})(r_{\alpha+1} -m_{\alpha+1}+1) \\
&+ \frac{1}{2}(n_{\alpha+1} - s_{\alpha})^2
+ \frac{1}{2}(n_{\alpha+1} + s_{\alpha+1} -2\min(n_{\alpha+1},s_{\alpha+1}))\\
&+ \frac{1}{2}s_{\alpha+1}^2 + m_{\alpha+1} - \min(r_{\alpha},m_{\alpha+1})
\end{align*}
Using the inductive step, we get Eq.~(\ref{eq:evencase2})

Hence, the lemma follows.
\end{proof}

We can now formulate and prove a refined version of Lemma~\ref{lemma:structure-chi}.
\begin{lemma}
\label{lemma:anycurve}
Let $\FF\in \MU$ with $\CCC=\Supp\FF$. Then $\chi(\OO_{\CCC})\geq 1$.
Equality holds if and only if $\chi(\OO_{e_0\cup\CCC^{\sbullet}}) =\chi(\OO_{e_0\cup\CCC_{\sbullet}}) =\chi(\OO_{e_0\cup{\vphantom{\CCC}}^{\sbullet}\CCC}) =\chi(\OO_{e_0\cup{\vphantom{\CCC}}_{\sbullet}\CCC})=1$.
\end{lemma}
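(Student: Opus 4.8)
The strategy is to express $\chi(\OO_{\CCC})$ in terms of the four single‑branch Euler characteristics $\chi(\OO_{e_0\cup\CCC^{\sbullet}}),\chi(\OO_{e_0\cup\CCC_{\sbullet}}),\chi(\OO_{e_0\cup{\vphantom{\CCC}}^{\sbullet}\CCC}),\chi(\OO_{e_0\cup{\vphantom{\CCC}}_{\sbullet}\CCC})$, apply Lemma~\ref{lemma:evencase2} to each, and close with an elementary convexity estimate. First, since the $[C_3]$‑coefficient of $\beta$ equals $1$, the component of $\CCC$ over $C_3$ has multiplicity $1$, hence is the reduced curve $e_0\cong\PP^1$; in particular $e_0\subseteq\CCC$, and, $\FF$ being stable, $\CCC$ is connected, so $\CCC=e_0\cup\CCC^{\sbullet}\cup\CCC_{\sbullet}\cup{\vphantom{\CCC}}^{\sbullet}\CCC\cup{\vphantom{\CCC}}_{\sbullet}\CCC$ as in the Terminology. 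Lemma~\ref{lemma:evencase2} applies verbatim to each of the four branch types, by the evident $T$‑equivariant symmetry of $\UFs$ interchanging the two ends of $e_0$ and the roles of $C_1,C_2$. Moreover, in the formula of Lemma~\ref{lemma:evencase2} the vanishing of all summation terms together with $s_{\alpha}=1$ forces $n_i=s_i$ for all $i$ and $n_{i+1}=s_i$, hence $n_1=1$; thus, writing $n_1^{B}$ for the outside thickening of the first edge of a branch $B$ (and $n_1^{B}=1$ when $B=\emptyset$, with $\chi(\OO_{e_0\cup\emptyset})=1$), Lemma~\ref{lemma:evencase2} gives, for each of the four branches,
\[
\chi(\OO_{e_0\cup B})\ \geq\ \tfrac12\,(n_1^{B})^2+\tfrac12,\qquad\text{with equality}\iff\chi(\OO_{e_0\cup B})=1 .
\]

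Next I would apply the master formula of Lemma~\ref{lemma:masterformula} to $\CCC$ and regroup the resulting sum of edge and vertex contributions by branch. The edge $e_0$ contributes $E(\OO_{\CCC},e_0)=1$; every other edge, and every vertex interior to a branch, has exactly the same local structure in $\CCC$ as in the single‑branch curve $e_0\cup B$ that contains it, so those terms reproduce the branch‑intrinsic ones. What remains are the two junction vertices $p_0$ and $q_0$. This is the one genuinely local computation: at $q_0$ the incident edges are the reduced $e_0$ — so two of its six thickening parameters in Lemma~\ref{lemma:masterformula} equal $1$ — together with the first edges of the branches $\CCC^{\sbullet},\CCC_{\sbullet}$ attached there; substituting into the three‑edge vertex formula and simplifying (this is exactly where reducedness of $e_0$ is used) yields
\[
V(\OO_{\CCC},q_0)\ =\ m_1^{\CCC^{\sbullet}}+m_1^{\CCC_{\sbullet}}+n_1^{\CCC^{\sbullet}}n_1^{\CCC_{\sbullet}}-1
\]
when both branches at $q_0$ are present (here $m_1^{B}$ denotes the inside thickening of the first edge of $B$, which is precisely the divalent junction correction appearing inside $\chi(\OO_{e_0\cup B})$), with the corresponding divalent expression when only one or neither is present, and likewise for $p_0$.

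Combining this with the single‑branch form of the master formula, the $m_1^{B}$ terms cancel and, when all four branches are nonempty, one obtains the identity
\[
\chi(\OO_{\CCC})\ =\ \sum_{B}\chi(\OO_{e_0\cup B})\ -\ 1\ -\ n_1^{\CCC^{\sbullet}}n_1^{\CCC_{\sbullet}}\ -\ n_1^{{\vphantom{\CCC}}^{\sbullet}\CCC}n_1^{{\vphantom{\CCC}}_{\sbullet}\CCC},
\]
the sum over the four branch types; when some branches are empty the same bookkeeping is strictly easier (a missing branch deletes a product term and turns a trivalent junction into a divalent one). Feeding in the estimate of the first paragraph and using $\tfrac12(x^2+y^2)\ge xy$,
\[
\begin{aligned}
\chi(\OO_{\CCC})\ &\geq\ \Big(\sum_{B}\tfrac12(n_1^{B})^2+2\Big)-1-n_1^{\CCC^{\sbullet}}n_1^{\CCC_{\sbullet}}-n_1^{{\vphantom{\CCC}}^{\sbullet}\CCC}n_1^{{\vphantom{\CCC}}_{\sbullet}\CCC}\\
&=\ 1+\tfrac12\big(n_1^{\CCC^{\sbullet}}-n_1^{\CCC_{\sbullet}}\big)^2+\tfrac12\big(n_1^{{\vphantom{\CCC}}^{\sbullet}\CCC}-n_1^{{\vphantom{\CCC}}_{\sbullet}\CCC}\big)^2\ \geq\ 1,
\end{aligned}
\]
which is the first assertion. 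For equality, every inequality used must be tight; tightness of the four branchwise estimates already forces $\chi(\OO_{e_0\cup B})=1$ (hence $n_1^{B}=1$) for all four $B$, which in turn makes the two square terms vanish and eliminates the slack in the degenerate cases, so equality holds exactly when $\chi(\OO_{e_0\cup\CCC^{\sbullet}})=\chi(\OO_{e_0\cup\CCC_{\sbullet}})=\chi(\OO_{e_0\cup{\vphantom{\CCC}}^{\sbullet}\CCC})=\chi(\OO_{e_0\cup{\vphantom{\CCC}}_{\sbullet}\CCC})=1$. Conversely, if all four equal $1$ then every $n_1^{B}=1$ and the displayed identity gives $\chi(\OO_{\CCC})=4-1-1-1=1$.

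I expect the main obstacle to be the local vertex computation at $p_0$ and $q_0$: one must choose affine coordinates around a trivalent vertex so that the ``inside/outside'' conventions governing Lemma~\ref{lemma:evencase2} are matched to the $(m,n,r,s,a,b)$ labelling of Lemma~\ref{lemma:masterformula}, and then check that setting the two $e_0$‑thickening parameters to $1$ is exactly what splits the trivalent correction into one contribution from each incident branch plus the harmless nonnegative cross term $n_1^{\CCC^{\sbullet}}n_1^{\CCC_{\sbullet}}-1$. Once this decoupling identity is in hand, the rest is the routine regrouping and the $\tfrac12(x^2+y^2)\ge xy$ estimate above.
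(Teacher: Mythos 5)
Your proof is correct and follows essentially the same route as the paper: decompose $\CCC$ into the four branches, apply Lemma~\ref{lemma:evencase2} to each $e_0\cup B$, and absorb the junction corrections at $p_0,q_0$ (which are exactly the cross terms $n_1^{\CCC^{\sbullet}}n_1^{\CCC_{\sbullet}}$ and $n_1^{{\vphantom{\CCC}}^{\sbullet}\CCC}n_1^{{\vphantom{\CCC}}_{\sbullet}\CCC}$ in outside multiplicities) via $\tfrac12(n^2+s^2)\ge ns$. The only difference is organizational: the paper combines the branches two at a time at each of $q_0$ and $p_0$ (its Eq.~(\ref{eq:generalcurvesupp})) and then glues the two halves along $e_0$, whereas you write the single four-branch identity with the explicit trivalent vertex correction, which you compute correctly.
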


\begin{proof}
By symmetry, the Lemma~\ref{lemma:evencase2} calculations done for the case of $e_0\cup\CCC^{\sbullet}$, also hold for structure sheaves of subcurves $e_0\cup\CCC_{\sbullet}$, $e_0\cup{\vphantom{\CCC}}^{\sbullet}\CCC$, or $e_0 \cup{\vphantom{\CCC}}_{\sbullet}\CCC$, after an appropriate change of label.

Since $\CCC\cong e_0 \cup \CCC^{\sbullet}\cup \CCC_{\sbullet}\cup{\vphantom{\CCC}}^{\sbullet}\CCC \cup{\vphantom{\CCC}}_{\sbullet}\CCC$, in order to calculate $\chi(\OO_{\CCC})$ using Lemma~\ref{lemma:evencase2}, we only need to see what correction terms are needed at points $p_0$ and $q_0$.

First, consider $\chi(\OO|_{e_0\cup\CCC_{\sbullet}^{\sbullet}})$. 

Then from Lemma~\ref{lemma:masterformula}, the only difference in the Euler characteristic calculation comes from the difference in the contribution at $q_0$. We have:
\begin{equation}
\label{eq:generalcurvesupp}
\chi(\OO|_{e_0\cup\CCC_{\sbullet}^{\sbullet}}) = \chi(\OO|_{e_0\cup\CCC^{\sbullet}}) +\chi(\OO|_{e_0\cup\CCC_{\sbullet}}) - ns,
\end{equation}
where $n$ is the outside multiplicity of $e_1^{\sbullet}$ and $s$ is the outside multiplicity of ${e_1}_{\sbullet}$. 

From Lemma~\ref{lemma:evencase2}, we know that
\[
\chi(\OO|_{e_0\cup\CCC^{\sbullet}}) - \frac{1}{2}n^2 > 0, \quad \chi(\OO|_{e_0\cup\CCC_{\sbullet}}) -\frac{1}{2}s^2  > 0.
\]
Combining this with
\[
\frac{1}{2}(n-s)^2\geq 0 \Rightarrow \frac{1}{2}n^2 + \frac{1}{2}s^2 \geq ns,
\]
Eq.~(\ref{eq:generalcurvesupp}) becomes
\begin{equation}
\chi(\OO|_{e_0\cup\CCC_{\sbullet}^{\sbullet}}) > 0 \Rightarrow \chi(\OO|_{e_0\cup\CCC_{\sbullet}^{\sbullet}}) \geq 1.
\label{eq:case1eqiff}
\end{equation}
Notice that Lemma~\ref{lemma:evencase2} in fact gives us that $\chi(\OO|_{e_0\cup\CCC^{\sbullet}})- \frac{1}{2}n^2\geq \frac{1}{2}$, with equality if and only if all the conditions which imply $\chi(\OO|_{e_0\cup\CCC^{\sbullet}})=1$, including $n=1$, are satisfied. Similarly, $\chi(\OO|_{e_0\cup\CCC_{\sbullet}})- \frac{1}{2}s^2\geq \frac{1}{2}$, with equality if and only if all the conditions which imply $\chi(\OO|_{e_0\cup\CCC_{\sbullet}})=1$ hold, including $s=1$. So we see that equality in the right hand side of Eq.~\ref{eq:case1eqiff} holds if and only if $\chi(\OO|_{e_0\cup\CCC^{\sbullet}}) = \chi(\OO|_{e_0\cup\CCC_{\sbullet}}) = 1$.

Recall that from Lemma~\ref{lemma:evencase2}, we know that $\chi(\OO|_{e_0\cup\CCC^{\sbullet}})\geq 1$. By the remark at the beginning of this proof, by changing labels, a similar result holds for any of the branches, so  $\chi(\OO|_{e_0\cup\CCC_{\sbullet}}) \geq 1$ also. 

Using this, we see that equality in Eq.~\ref{eq:case1eqiff} holds if and only if $\chi(\OO|_{e_0\cup\CCC^{\sbullet}}) = \chi(\OO|_{e_0\cup\CCC_{\sbullet}}) = 1$.

A similar argument holds for $\chi(\OO|_{e_0\cup {\vphantom{\CCC}}_{\sbullet}^{\sbullet}\CCC})$, and we also have $\chi(\OO|_{e_0\cup {\vphantom{\CCC}}_{\sbullet}^{\sbullet}\CCC}) \geq 1$, with equality if and only if $\chi(\OO|_{e_0\cup{\vphantom{\CCC}}^{\sbullet}\CCC}) = \chi(\OO|_{e_0\cup{\vphantom{\CCC}}_{\sbullet}\CCC}) = 1$.

Now, in general, we have $\CCC=e_0\cup\CCC_{\sbullet}^{\sbullet} \cup {\vphantom{\CCC}}_{\sbullet}^{\sbullet}\CCC$, so applying Euler characteristic on the normalization exact sequence,
\[
\chi(\OO_{\CCC})=\chi(\OO|_{e_0\cup\CCC_{\sbullet}^{\sbullet}}) + \chi(\OO|_{e_0\cup{\vphantom{\CCC}}_{\sbullet}^{\sbullet}\CCC}) - \chi(\OO_{e_0}) = \chi(\OO|_{e_0\cup\CCC_{\sbullet}^{\sbullet}}) + \chi(\OO|_{e_0\cup{\vphantom{\CCC}}_{\sbullet}^{\sbullet}\CCC}) - 1\geq 1.\]

Equality holds if and only if $\chi(\OO|_{e_0\cup\CCC_{\sbullet}^{\sbullet}}) =\chi(\OO|_{e_0\cup{\vphantom{\CCC}}_{\sbullet}^{\sbullet}\CCC}) = 1$, which proves the lemma.
\end{proof}

\begin{remark}
Lemma~\ref{lemma:structure-chi} now immediately follows from Lemma~\ref{lemma:anycurve}.
\end{remark}

\subsection{Proof of Corollary~\ref{cor:subscheme-chi}.}\label{subsection:corproof}
\begin{proof}Given $\DDD$ a closed subscheme of a curve $\CCC$ in $\Gamma$, we first claim that we may assume that $\OO_{\DDD}$ is pure 1-dimension. If not, by primary decomposition, there is a maximal pure 1-dimensional subscheme $\DDD_1 \subset \DDD$. Then we can write
\[
0\to K_0 \to \OO_{\DDD} \to \OO_{\DDD_1} \to 0,
\]
where $K_0$ is some zero dimensional sheaf. Then $\chi(\OO_{\DDD})=\chi(\OO_{\DDD_1})+\chi(K_0)\geq \chi(\OO_{\DDD_1})$ because any zero dimensional sheaf has nonnegative Euler characteristic.

We may also assume that $\DDD$ is connected. Indeed, if $\DDD=\amalg_i \DDD_i$, then $\chi(\OO_{\DDD})=\sum_i\chi(\OO_{\DDD_i})$. 

The only case left to consider that is not already covered by Lemma~\ref{lemma:anycurve} is when $\DDD$ is a connected pure 1 dimension curve in $\Gamma$ that does not contain $e_0$. But then we can define a new curve $\DDD'\coloneqq\DDD \cup e_0$ by attaching $e_0$ to a torus fixed point of valence 1 and apply Lemma~\ref{lemma:anycurve} to $\DDD'$. This gives us $1\leq\chi(\OO_{\DDD'})=\chi(\OO_{\DDD}) + 1-m_1$, where $m_1\geq1$ is the inside thickening of the chosen attaching edge. So $\chi(\OO_{\DDD})\geq 1$ in all cases as claimed.
\end{proof}

\section{Combinatorics} \label{sec:combinatorics}
\subsection{Discussion}
We summarise the results of the previous section and show how this leads to a generating function for the naive count of curves in $\MU$. In Proposition~\ref{prop:anyrankdeg}, we showed that the sheaves in $\MU$ are torus fixed structure sheaves of curves $\CCC$ with $\chi(\OO_{\CCC})=1$. In the proof of Lemma~\ref{lemma:evencase2} and Lemma~\ref{lemma:anycurve}, we computed the constraints this imposes on the multiple structure of $\CCC$ in order for equality to hold. This leads to the following:
\begin{proposition}
\label{prop:constraint}
Let $\FF\in\MU$ and $\Supp(\FF)=\CCC =e_0\cup\CCC^{\sbullet}\cup\CCC_{\sbullet}\cup{\vphantom{\CCC}}^{\sbullet}\CCC\cup{\vphantom{\CCC}}_{\sbullet}\CCC$. Let $\{e_i\}\not\ni e_0$ be the edges of any one of the four branches of $\CCC$. Then the multiple structures of the $\{e_i\}$ satisfy the following properties. 
\begin{enumerate}
	\item The inside multiplicity of any edge that intersects $e_0$ is unrestricted.
	\item All nonzero outside multiplicities must be 1.	
	\item For each consecutive pair $(e_{2k-1},e_{2k})$, the inside multiplicities of the second edge is equal to or one less than that of the first.
	\item The inside multiplicities are non-increasing on each branch.
\end{enumerate}
\end{proposition}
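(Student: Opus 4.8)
The plan is to extract the four properties directly from the equality case of Lemma~\ref{lemma:evencase2}, one branch at a time. First I would apply Proposition~\ref{prop:anyrankdeg} to replace $\FF$ by $\OO_{\CCC}$ with $\chi(\OO_{\CCC})=1$, and recall from the conventions of Subsection~\ref{subsection:master} that $\CCC$ may be taken to be a type-$\Gamma$ curve containing the fixed edge $e_0$. Lemma~\ref{lemma:anycurve} then forces
\[
\chi(\OO_{e_0\cup\CCC^{\sbullet}}) = \chi(\OO_{e_0\cup\CCC_{\sbullet}}) = \chi(\OO_{e_0\cup{\vphantom{\CCC}}^{\sbullet}\CCC}) = \chi(\OO_{e_0\cup{\vphantom{\CCC}}_{\sbullet}\CCC}) = 1,
\]
so by the symmetry among the branches it suffices to read off the constraints on a single branch, say $\CCC^{\sbullet}$, using the multiplicity notation $m_i,n_i$ (on the edges $e_{2i-1}^{\sbullet}$) and $r_i,s_i$ (on the edges $e_{2i}^{\sbullet}$) of Subsection~\ref{subsection:lemmaproof}, appending an empty edge when $|\CCC^{\sbullet}|$ is odd.

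Next I would observe that in the formula of Lemma~\ref{lemma:evencase2} every summand is non-negative and $n_1,s_{\alpha}\geq 1$, so the equality $\chi(\OO_{e_0\cup\CCC^{\sbullet}})=1$ forces $n_1=s_{\alpha}=1$ together with the vanishing of each summand, and it remains only to translate these vanishings. The relation $n_i+s_i-2\min(n_i,s_i)=0$ gives $n_i=s_i$, the relation $(n_{i+1}-s_i)^2=0$ gives $n_{i+1}=s_i$, and feeding in the boundary value $n_1=1$ telescopes to $n_i=s_i=1$ along the whole branch, which is property~(2). The relation $(r_i-m_i)(r_i-m_i+1)=0$ gives $r_i\in\{m_i,m_i-1\}$, which is property~(3). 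The relation $m_i-\min(r_{i-1},m_i)=0$ gives $m_i\leq r_{i-1}$ for $i\geq 2$, and combining this with $r_{i-1}\leq m_{i-1}$ from property~(3) shows that the inside multiplicities read consecutively along the branch, $m_1\geq r_1\geq m_2\geq r_2\geq\cdots$, are non-increasing, which is property~(4). For property~(1) I would simply note that $m_1$ (and, on the other three branches, the inside multiplicity of the edge adjacent to $e_0$) enters the formula of Lemma~\ref{lemma:evencase2} only through the factor $(r_1-m_1)(r_1-m_1+1)$, which constrains $r_1$ relative to $m_1$ but imposes no bound on $m_1$ itself.

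Finally I would check that the empty-edge convention ($r_{\alpha}=0$, $s_{\alpha}=1$ when $|\CCC^{\sbullet}|$ is odd) is consistent with all four properties, and that the identical argument applies to $\CCC_{\sbullet}$, ${\vphantom{\CCC}}^{\sbullet}\CCC$ and ${\vphantom{\CCC}}_{\sbullet}\CCC$ after relabeling. I do not expect a genuine obstacle here: all the arithmetic content already lives in Lemmas~\ref{lemma:evencase2} and~\ref{lemma:anycurve}. The one point that warrants a little care is the telescoping of the outside multiplicities, which uses the boundary value $n_1=1$ at the $e_0$-end of each branch (and $s_{\alpha}=1$ at the far end); without that input one would only conclude that the outside multiplicities along a branch are all equal to one another, so I would make the appearance of these boundary values explicit in order to obtain property~(2) in the sharp form stated.
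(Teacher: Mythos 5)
Your proposal is correct and follows essentially the same route as the paper: reduce to a single branch via Proposition~\ref{prop:anyrankdeg} and Lemma~\ref{lemma:anycurve}, then read off conditions (1)--(4) from the vanishing of each non-negative summand in Lemma~\ref{lemma:evencase2}. Your extra care with the telescoping of the outside multiplicities and with the direction of the inequality $m_i \leq r_{i-1}$ coming from the fourth summation (the paper's proof states this inequality with the sides reversed, evidently a slip) is welcome but does not change the argument.
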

\begin{proof}
By Proposition~\ref{prop:anyrankdeg}, we must have $\chi(\OO_{\CCC}) = 1$. By Lemma~\ref{lemma:anycurve}, this holds if and only if $\chi(\OO|_{C}) = 1$ for all of the subcurves $C\in\{e_0\cup\CCC^{\sbullet},\,e_0\cup\CCC_{\sbullet},\,e_0\cup{\vphantom{\CCC}}^{\sbullet}\CCC,\,e_0\cup{\vphantom{\CCC}}_{\sbullet}\CCC\}$. By symmetry, it suffices to study the constraints this imposes on any one of these branches. 

We will choose to let $\CCC = e_0\cup\CCC^{\sbullet}$ and continue to use the same notation as in Lemma~\ref{lemma:evencase2}. A consecutive pair $(e_{2k-1},e_{2k})$ has inside multiplicity $m_{k}, r_{k}$, in that order, and outside multiplicity $n_{k}, s_{k}$. We will interpret the conclusion of the lemmas to see how they imply the conditions above.

In both lemmas, $n_1$ and $s_\alpha$ must be 1 in order that $\chi(\OO|_{C}) = 1.$ 

Consider the four summation terms in Eq.~\ref{eq:evencase2}. In order for the second and third summation to be 0, we must have all $s_i=n_i$ for $1\leq i \leq\alpha$ and all $n_{i+1}=s_i$ for $1\leq i \leq\alpha-1$. Together with $n_1=1$, this implies that all $n_i=1$ and $s_i=1$ for $1\leq i \leq\alpha$.  

This shows condition (2). 

In order for the first summation to be 0, we must have either $r_i = m_i$ or $r_i + 1 = m_i$ for all $i$. This is equivalent to condition (3). 

The fourth summation term is equal to zero only when $r_{i-1}\leq m_i$ for all $i$. This, along with condition (3), gives condition (4).
\end{proof}
We would like to count the curves that satisfy these constraints. The constraints on each branch curve are independent of the other branches, so it suffices to count the possible subcurves for any one of the types $\{e_0\cup\CCC^{\sbullet},\,e_0\cup\CCC_{\sbullet},\,e_0\cup {\vphantom{\CCC}}^{\sbullet}\CCC,\,e_0\cup{\vphantom{\CCC}}_{\sbullet}\CCC\}$, and then change labels as necessary to get the counts on the other types. 

First, we count the allowed curves on some fixed branch. Since the outside multiplicities must always be 1, the only choice is in the inside multiplicities. We can represent these lengths as boxes, where the number of boxes in each row corresponds to the multiplicity of the corresponding edge, Figure~\ref{fig:young}.

\begin{figure}[th!]
  \centering
	\scalebox{1.2}{

\begin{tikzpicture}[y=0.80pt, x=0.80pt, yscale=-1.000000, xscale=1.000000, inner sep=0pt, outer sep=0pt]

\begin{scope}[shift={(0,0)}]
  \node at (0,0) {\includegraphics[scale=.3]{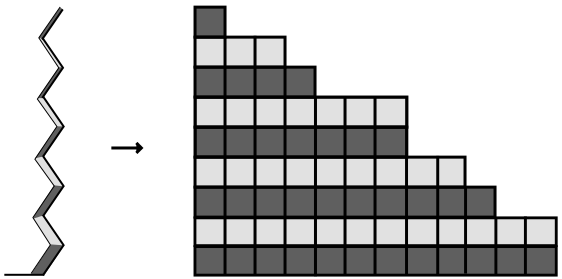}};
	
	\node at (-112,-10) {$\scriptstyle \UFs$};
	
	\node at (-27,60) {$\scriptstyle 9$};
	\node at (-15,60) {$\scriptstyle 8$};
	\node at (-3,60) {$\scriptstyle 8$};
	\node at (8,60) {$\scriptstyle 7$};
	\node at (19,60) {$\scriptstyle 6$};
	\node at (31,60) {$\scriptstyle 6$};
	\node at (42,60) {$\scriptstyle 6$};
	\node at (53,60) {$\scriptstyle 4$};
	\node at (64,60) {$\scriptstyle 4$};
	\node at (75,60) {$\scriptstyle 3$};
	\node at (86,60) {$\scriptstyle 2$};
	\node at (97,60) {$\scriptstyle 2$};
	\node at (40,75) {\scriptsize Column heights};
	
	\node at (122,-45) {$\scriptstyle 1$};
	\node at (122,-34) {$\scriptstyle 3$};
	\node at (122,-23) {$\scriptstyle 4$};
	\node at (122,-12) {$\scriptstyle 7$};
	\node at (122,0) {$\scriptstyle 7$};
	\node at (122,12) {$\scriptstyle 9$};
	\node at (120,23) {$\scriptstyle 10$};
	\node at (120,35) {$\scriptstyle 12$};
	\node at (120,47) {$\scriptstyle 12$};
	\node at (135,0) {\rotatebox{-90}{\scriptsize Thickening}};
			
		\end{scope}
\end{tikzpicture}
	}
  \caption{Multiple structure represented as a partition}
	\label{fig:young}
\end{figure}

Proposition~\ref{prop:constraint} constrains the shape of this partition. Condition (1) says that the bottom row can be any length. Condition (4) means that the rows are non-increasing in length, so we have a Young diagram. Then if we view the Young diagram as a partition via its columns rather than its rows, condition (3) forces this partition to have odd parts distinct. We can visualize this by alternating row colors to highlight consecutive pairs as in Figure~\ref{fig:young}. Here, the dark capped columns give odd parts, and they occur singly since consecutive pairs have lengths that differ by at most one.

We need to keep track of the curve class that each partition represents. Edges along a given branch of $\UFs$ alternate between pushing forward to a multiple of $[C_1]$ and to $[C_2]$. In terms of our Young diagram, this means boxes of the same color correspond to the same curve class. The specific assignment of box color to curve class depends on the branch. The difference between the number of dark and light boxes is exactly the number of odd parts that appears in the partition. 

We encode the previous discussion into a generating function. First, the number of integer partitions with \textit{only} distinct odd parts $(\ODOP)$ can be written using $q$ to track partitions and $t$ to track the number of odd parts \cite[\textsection 2.5.21]{goulden-jackson04}:
\begin{equation}
\sum\limits_{\lambda\in\ODOP} q^{|\lambda|} t^{OP(\lambda)} =\prod\limits_{n=1}^{\infty}{(1+tq^{2n-1})}.
\label{eq:odop}
\end{equation}
In this equation, $|\lambda|$ is the size of the $\ODOP$ partition $\lambda$, and $OP(\lambda)$ again denotes the number of odd parts in $\lambda$.

We are interested in partitions whose odd parts are distinct, but may have arbitrary even parts. The generating function for these odd parts distinct $(\OPD)$ partitions is thus the following modification of Eq.~(\ref{eq:odop}):
\begin{equation}
\sum\limits_{\lambda\in\OPD} q^{|\lambda|} t^{OP(\lambda)} =\prod\limits_{n=1}^{\infty}{\frac{(1+tq^{2n-1})}{(1-q^{2n})}}.
\label{eq:dop}
\end{equation}
Here, $\OPD$ are integer partitions with odd parts distinct, $|\lambda|$ is the size of the $\OPD$ partition $\lambda$, and $OP(\lambda)$ denotes the number of odd parts in $\lambda$. 

On the other hand, we can express an $(\OPD)$ partition using variables $x$ and $y$ that track the number of dark and light boxes, respectively, in our Young diagram:
\begin{equation}
\sum\limits_{\lambda\in\OPD} q^{|\lambda|} t^{OP(\lambda)} = \sum\limits_{\lambda\in\OPD} x^{\frac{1}{2}(|\lambda| +  OP(\lambda))} y^{\frac{1}{2}(|\lambda| -  OP(\lambda))}.
\label{eq:dop2}
\end{equation}
These expressions are related through the change of variables 
\[q=\sqrt{xy}, \quad t=\sqrt{\frac{x}{y}}\]
so we can rewrite the right hand side of Eq.~(\ref{eq:dop}) as:
\begin{equation}
\prod\limits_{n=1}^{\infty}{\frac{1+x^{n}y^{n-1}}{1-x^n y^n}}.
\end{equation}
So far we have restricted the discussion to one branch. For the other branches, the counts have a similar expression, but the roles of $x$ and $y$ may be reversed, depending on whether the first edge covers $[C_1]$ or $[C_2]$. Therefore, the total count of curves satisfying Proposition \ref{prop:constraint} is 
\begin{equation}
\prod\limits_{n=1}^{\infty}{\frac{(1+x^{n}y^{n-1})^2(1+x^{n-1}y^{n})^2}{(1-x^n y^n)^4}}.
\end{equation}
We have now proved the following:
\begin{proposition}\label{prop:dop3}
The number of curves $\CCC$ satisfying the constraints in Proposition~\ref{prop:constraint} can be expressed in terms of the number of partitions with distinct odd parts, namely,
\begin{equation}
\sum\limits_{d_1, d_2} \naive x^{d_1}y^{d_2}= 12\prod\limits_{n=1}^{\infty}{\frac{(1+x^{n}y^{n-1})^2(1+x^{n-1}y^{n})^2}{(1-x^n y^n)^4}}.
\label{eq:dop3}
\end{equation}
\end{proposition}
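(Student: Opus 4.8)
The plan is to assemble the generating function by combining three facts already established in the excerpt, organized around the branch decomposition. First, Proposition~\ref{prop:anyrankdeg} tells us that every $\FFF\in\MU$ is the structure sheaf $\OO_{\CCC}$ of a type-$\Gamma$ curve with $\chi(\OO_\CCC)=1$, and (by the remark following Proposition~\ref{prop:atomic} and the uniqueness statement there) we may normalize so that $\CCC$ contains the fixed edge $e_0$. Thus $\naivev$ counts exactly the curves $\CCC=e_0\cup\CCC^{\sbullet}\cup\CCC_{\sbullet}\cup{\vphantom{\CCC}}^{\sbullet}\CCC\cup{\vphantom{\CCC}}_{\sbullet}\CCC$ subject to the constraints of Proposition~\ref{prop:constraint}. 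Second, Proposition~\ref{prop:constraint} reduces the data of such a curve to a choice of one partition per branch — nonincreasing inside multiplicities with odd parts distinct (when read as a column partition), all outside multiplicities equal to $1$ — and these four choices are independent. So $\naivev$ factors as a fourfold product of single-branch counts.

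The core step is then to identify the single-branch generating function, tracking the curve class $\beta_{\mathbf d}=d_1[C_1]+d_2[C_2]+[C_3]$. I would argue as in the Discussion: on a fixed branch the edges alternate between pushing forward to multiples of $[C_1]$ and of $[C_2]$, so in the Young-diagram picture the two alternating colors record the $d_1$- and $d_2$-contributions, and the discrepancy between the number of dark and light boxes equals the number of odd parts $OP(\lambda)$. Hence a branch with partition $\lambda$ (odd parts distinct, arbitrary even parts) contributes $x^{\frac12(|\lambda|+OP(\lambda))}y^{\frac12(|\lambda|-OP(\lambda))}$, matching Eq.~\eqref{eq:dop2}. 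Now invoke the known generating function \cite[\textsection 2.5.21]{goulden-jackson04} for partitions with distinct odd parts, Eq.~\eqref{eq:odop}, adjoin a free even part via the factor $\prod_n (1-q^{2n})^{-1}$ to get Eq.~\eqref{eq:dop}, and substitute $q=\sqrt{xy}$, $t=\sqrt{x/y}$. This gives the single-branch series $\prod_{n\geq1}\frac{1+x^ny^{n-1}}{1-x^ny^n}$.

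Finally I would take the product over the four branches. Two of them have the first edge over $C_1$ and two over $C_2$ (this follows from the identifications fixed in Figure~\ref{fig:branchschematic}); swapping $x\leftrightarrow y$ on the latter two gives the mirrored factor $\prod_{n\geq1}\frac{1+x^{n-1}y^n}{1-x^ny^n}$. Multiplying the four contributions yields $\prod_{n\geq1}\frac{(1+x^ny^{n-1})^2(1+x^{n-1}y^n)^2}{(1-x^ny^n)^4}$. Since each of the twelve disjoint singular fibers contributes an identical count (Proposition~\ref{prop:tfixcount} and the corollary at the end of Section~\ref{sec: setup}), multiplying by $12$ produces Eq.~\eqref{eq:dop3}, as claimed.

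The main obstacle — really the only nonroutine point — is the bookkeeping that converts Proposition~\ref{prop:constraint}'s multiplicity constraints into precisely ``partitions with distinct odd parts'' and simultaneously reads off the bidegree $(d_1,d_2)$ from the coloring. One must check carefully that condition~(3) (second edge of each consecutive pair has inside multiplicity equal to or one less than the first) is exactly the odd-parts-distinct condition on the conjugate partition, that condition~(4) gives the Young-diagram (nonincreasing) shape, that condition~(1) correctly allows an arbitrary bottom row, and that the alternation of $[C_1]$ vs. $[C_2]$ along a branch is consistent with the $\frac12(|\lambda|\pm OP(\lambda))$ exponents; the rest is the cited product identity and an elementary substitution.
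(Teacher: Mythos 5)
Your proposal is correct and follows essentially the same route as the paper: the branch decomposition, the identification of each branch's inside multiplicities with a Young diagram whose column partition has distinct odd parts, the generating function of Eq.~(\ref{eq:dop}) with the substitution $q=\sqrt{xy}$, $t=\sqrt{x/y}$, the fourfold product over branches with $x\leftrightarrow y$ swapped on two of them, and the overall factor of $12$ from the twelve singular fibers. No gaps.
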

\begin{remark} The main result of the next section is to show that incorporating the Behrend function weighting into the Euler characteristic computation amounts to the following sign change:
\begin{equation}
n^0_{\beta_{d_1, d_2}}(\Xb) = (-1)^{d_1+d_2}\,\naive.
\label{eq:sign}
\end{equation}
Together with the result of Proposition~\ref{prop:dop3}, this gives Eq.~(\ref{eq:main}). This will then conclude the proof of our main result, Theorem~\ref{th:main}.
\label{rem:naive}
\end{remark}

\section{Computing the Behrend function weighted Euler Characteristic} \label{sec:behrend function}
In this section we prove in Proposition~\ref{prop:behrend} that the naive and Behrend function weighted Euler characteristics are related by a sign change as discussed in Remark~\ref{rem:naive}. 

Recall from Proposition~\ref{prop:tfixcount} and the discussion preceding it, the sheaves in $M=M^{\Xb}_{\beta}$ are scheme-theoretically supported on fibers of $\Xb\to\PP^1$, so, in particular, we have a fibration $\rho:M\to\PP^1$. The group scheme $X^0\to\PP^1$ acts on $M\to\PP^1$, preserving the Calabi-Yau form and the symmetric obstruction theory, and hence the Behrend function. So we only need to consider isolated fixed points in $[\FF]\in M$, where $\FF$ is supported on $\Fsing$. 

In order to compute the Behrend function $\nu_{M}$, however, we need to study infinitesimal deformations into the whole space $\Xb$. The tangent space to a fixed sheaf $[\FF]\in M$, is given by
\[
T_{[\FF]}M = (\Ext^1_{\Xb})_0(\FF,\FF).
\]
Since we are only considering sheaves supported on $\Fsing$, it suffices to compute this on $\Fhat$, the formal completion of $\Xb$ along $\Fsing$, which is a formal toric Calabi-Yau threefold. The $\CC^*$-action of $\Fhat$ then allows us to use the Behrend-Fantechi result \cite[Corollary 3.5]{behrend-fantechi08}.

More precisely, we have the following definition.
\begin{definition}
Let $\Mhat$ be the formal scheme:
\[\Mhat \text { is the formal completion of } M \text { along } \Msing.\]
\end{definition}
Then the Behrend function satisfies \cite{jiang17},
\[
(\nu_{M})|_{\Msing} =(\nu_{\Mhat})|_{\Msing}.
\]
Recall that the action of the torus $T\cong\CC^*\times \CC^*$ on $\Fsing$ came from the group scheme action on $\Xb$. This torus action can be extended to an action on $\Fhat$ \cite[Lemma 4.5]{bryan19}. As a consequence, $\Mhat\subset M$ inherits a $T$ action since $\Mhat$ only depends on $\Fhat$. Furthermore, this action is shown to preserve the symmetric obstruction theory on $\Mhat$.

In the following Lemma~\ref{lemma:sot_preserved}, we show that the symmetric obstruction theory on $\Mhat$ is also equivariant with respect to the group action induced from $P$. Then using \cite[Corollary 3.5]{behrend-fantechi08}, the Behrend function weighted Euler characteristic of the moduli space depends only on the parity of the dimension of the tangent space at the fixed points $\MhatTP$ of both actions,
\[
e(\Mhat,\nu_{\Mhat}) = \sum_{[\FF]\in \MhatTP}{(-1)^{\dim \Ext^1_{\Fhat}(\FF,\FF)}}.
\]
So all that will be left to do is to determine $\dim \Ext^1(\FF,\FF)$ (mod 2) at the fixed points $[\FF]\in \MhatTP$.
\begin{lemma}
The action of $P$ extends to an action on $\Mhat$. Furthermore, the symmetric obstruction theory on $\Mhat$ is equivariant with respect to this action.
\label{lemma:sot_preserved}
\end{lemma}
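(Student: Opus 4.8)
The plan is to treat the two assertions of Lemma~\ref{lemma:sot_preserved} separately, but with the same underlying mechanism. First I would show that $P=\Pic^0(\Fsing)\cong\CC^*\times\CC^*$ acts on $\Mhat$. The point is that tensoring by a degree-zero line bundle $L_{\boldsymbol\mu}$ on $\Fsing$ — equivalently, on the formal neighborhood $\Fhat$, since $L_{\boldsymbol\mu}$ extends to $\Fhat$ because $H^1(\OO_{\Fhat}^\times)$ is computed in the same group-cohomology way as in the remark after Proposition~\ref{prop:fsing bundles} — is an exact autoequivalence of $\Coh(\Fhat)$ that preserves Euler characteristic, purity, dimension, support class, and stability (the last by the argument already given in Proposition~\ref{prop:stable_updown}). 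Functorially, tensoring by the universal family of degree-zero line bundles over $P\times\Fhat$ with a given flat family of sheaves over $S\times\Fhat$ produces a flat family over $(P\times S)\times\Fhat$, hence a morphism $P\times\Mhat\to\Mhat$; associativity/identity are immediate from $L_{\boldsymbol\mu}\otimes L_{\boldsymbol\lambda}\cong L_{\boldsymbol\mu\boldsymbol\lambda}$ and $L_{(1,1)}\cong\OO$. This gives the action.

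Next I would argue that the symmetric obstruction theory is $P$-equivariant. Recall $\Mhat$ carries a symmetric obstruction theory because $\Fhat$ is a formal Calabi-Yau threefold, and the obstruction theory at a point $[\EE]$ is built from $\Ext^\bullet_{\Fhat}(\EE,\EE)$ together with the Serre-duality pairing coming from a trivialization of $K_{\Fhat}$. The key observations are: (i) tensoring by a line bundle induces canonical isomorphisms $\Ext^i(\EE,\EE)\xrightarrow{\sim}\Ext^i(\EE\otimes L_{\boldsymbol\mu},\EE\otimes L_{\boldsymbol\mu})$ for all $i$, naturally in $\EE$ and compatibly with Yoneda composition and trace maps; (ii) the action of $P$ on $\Fhat$ is trivial on $K_{\Fhat}$ — tensoring by a degree-zero line bundle does not touch the canonical bundle at all, so the chosen trivialization, hence the Serre-duality pairing, is preserved on the nose. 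Therefore the family of complexes defining the obstruction theory, pulled back along the $P$-action, is canonically isomorphic to the original one, and the isomorphism respects the symmetric pairing. Concretely one phrases this as: the perfect obstruction theory $E^\bullet\to L_{\Mhat}$ is $P$-equivariant and the symmetry $E^\bullet\cong (E^\bullet)^\vee[1]$ is an isomorphism of $P$-equivariant complexes. This is exactly the hypothesis needed to invoke \cite[Corollary 3.5]{behrend-fantechi08}.

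I would then record the payoff: combining $P$-equivariance with the already-established $T$-equivariance of the symmetric obstruction theory (inherited from the group-scheme action, \cite[Lemma 19]{bryan19} and the discussion preceding this lemma), the full torus $T\times P\cong(\CC^*)^4$ acts preserving the symmetric obstruction theory, and \cite[Corollary 3.5]{behrend-fantechi08} yields
\[
e(\Mhat,\nu_{\Mhat})=\sum_{p\in\MhatTP}(-1)^{\dim T_p\Mhat}=\sum_{p\in\MhatTP}(-1)^{\dim\Ext^1(\OO_p,\OO_p)},
\]
reducing everything to a parity computation carried out in the remainder of the section.

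The main obstacle I anticipate is not the formal functoriality but making precise that the trivialization of $K_{\Fhat}$ — which enters the \emph{symmetric} part of the structure — is genuinely fixed by the $P$-action, and that the Serre-duality/trace pairing on $\Ext^\bullet$ transports correctly under $-\otimes L_{\boldsymbol\mu}$. Since $\Fhat$ is only a formal scheme (and the étale cover $\Fnormwidehat\to\Fhat$ must be invoked to even speak of $K_{\Fhat}$ concretely), some care is needed to check that all the relevant constructions — the obstruction theory, its symmetry, the $P$-linearization — descend/ascend along the étale cover compatibly. I expect this to be routine once set up, because $-\otimes L_{\boldsymbol\mu}$ is manifestly $\OO_{\Fhat}$-linear and hence commutes with everything in sight, but it is the step that requires the most bookkeeping.
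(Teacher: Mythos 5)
Your proposal is correct and follows essentially the same route as the paper: the action on $\Mhat$ is induced by tensoring the universal sheaf with (the pullback of) $L_{\boldsymbol{\mu}}$, and equivariance of the symmetric obstruction theory follows from the canonical isomorphism $R\HHom(\EE\otimes\LL_{\boldsymbol{\mu}},\EE\otimes\LL_{\boldsymbol{\mu}})\cong R\HHom(\EE,\EE)$ together with the fact that the trace map, the complex $E=R(p_1)_*R\HHom(\FF,\omega_{\Fhat})[2]\to L_{\Mhat}$, and the symmetric form induced from $\omega_{\Fhat}\cong\OO_{\Fhat}$ are all preserved. The extra care you flag about the formal scheme and the trivialization of $K_{\Fhat}$ is handled in the paper exactly as you anticipate, with no additional argument needed.
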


\begin{proof}
The action of $P\cong\CC^*\times \CC^*$ on $\Msing$ came from tensoring by degree 0 line bundles $L_{\bm{\mu}}$ supported on $\Fsing$. By the same arguments as in Section~\ref{sec: equiv}, we also have $P\cong\CC^*\times \CC^*\subset \Pic^0(\Fhat)$. This induces an action of $P$ on the moduli space $\Mhat$ as follows.

Given some $\bm{\mu}\in P$ corresponding to the flat line bundle $L_{\bm{\mu}}$ on $\Fhat$, let $\LL_{\bm{\mu}} \coloneqq p_2^*L_{\bm{\mu}}$ where $p_i$ is projection to the $i$-th factor. Let $\EE$ be the universal sheaf over $\Mhat \times \Fhat$. 
\[ 
\begin{tikzcd}[row sep=small, column sep=small]
&\EE \arrow[d]\\
 & {\Mhat \times \Fhat}  \arrow[dl,"p_1"]\arrow[dr,swap,"p_2"]  \\ 
\Mhat&& \Fhat
\end{tikzcd}
\]
If we tensor $\EE$ by $\LL_{\bm{\mu}}$, this induces a map $\phi_{\mu}:\Mhat\to \Mhat$ by the universal property of $\EE$ as in the diagram below. This gives an action of $P$ on $\Mhat$ with $\EE$ as an $P$-equivariant sheaf.
\[ 
\begin{tikzcd}
\phi_{\mu}^*\EE\cong\EE\otimes \LL_{\bm{\mu}} \arrow[r] \arrow[d]  & \EE \arrow[d] \\ 
\phantom{\phi_{\mu}^*\EE}\Mhat \times \Fhat \arrow[r, "\phi_{\mu}"]& \Mhat \times \Fhat.
\end{tikzcd}
\]
From
\[
\HHom(\EE\otimes \LL_{\bm{\mu}},\EE\otimes \LL_{\bm{\mu}}) \cong \HHom(\EE,\EE\otimes \LL_{\bm{\mu}}\otimes \LL_{\bm{\mu}}^{\vee}) \cong  \HHom(\EE,\EE),
\]
we get the canonical isomorphism
\[
R\HHom(\EE\otimes \LL_{\bm{\mu}},\EE\otimes \LL_{\bm{\mu}}) \cong R\HHom(\EE,\EE),
\]
and thus
\[
R\HHom(\phi_{\mu}^*\EE,\phi_{\mu}^*\EE) \cong R\HHom(\EE,\EE).
\]

This implies that the shifted cone $\FF$ of the trace map $R\HHom(\EE,\EE) \to \OO_{\Mhat \times \Fhat}$ in $D(\OO_{\Mhat \times \Fhat})$ is preserved by $P$. 
\[ 
\begin{tikzcd}
 & \OO_{\Mhat \times \Fhat}  \arrow[dl, swap, "+1"]  \\ 
\FF \arrow[rr]&& R\HHom(\EE,\EE)\arrow[ul,swap,"tr"]
\end{tikzcd}
\]
All the constructions of the obstruction theory \cite[Lemma 2.2]{behrend-fantechi08}
\[E\coloneqq R(p_1)_{*}R\HHom(\FF,\omega_{\Fhat})[2] \to L_{\Mhat},\]
as well as the nondegenerate symmetric bilinear form $\theta:E\to E^{\vee}[1]$ which is induced from $\omega_{\Fhat}\cong \OO_{\Fhat}\to \OO_{\Fhat}$, are also equivariant. Hence the $P$-action is equivariant and symmetric, and preserves the symmetric obstruction theory on $\Mhat$.
\end{proof}

\subsection{Relating deformations of sheaves on $\Fhat$ and $\UFhat$}
We will show that the dimension of $\Ext^1(\FF, \FF)$ for the fixed points $[\FF]\in \MsingTP$ has the same parity whether considered as sheaves on $\Fhat$ or on $\UFhat$. This implies that their Behrend function contributions to the Euler characteristic are the same, so we may calculate this on $\UFhat$.  We regard the fixed points as sheaves on the formal schemes, pushed forward under the respective inclusions $\Fsing\hookrightarrow \Fhat$ and $\UFs \hookrightarrow \UFhat$.

In Proposition~\ref{prop:anyrankdeg}, we showed that sheaves in $\MU$ were possibly non-reduced structure sheaves $\OO_{\CCC}$ of certain types of curves in $\UFhat$. As explained in section \ref{sec: equiv}, 
this corresponds to a point in $\MsingTP$ by 
\[
\MsingTP\ni\FF \longleftrightarrow \FFF_0 = \OO_{\CCC}\in \MU,
\]
where the correspondence is given as
\begin{equation}
\pr_*\FFF_0 = \FF \text{ and }
\pr^*\FF = \FFF=\bigoplus_{k,l\in\ZZ^2}(e_1^k e_2^l )^*\FFF_0
\label{eq:atomictranslates}
\end{equation}
with the $G\coloneqq\ZZ\times\ZZ$ action on $\Coh(\UFhat)$ covering the deck transformations.

\begin{proposition}
For any $\FF\in \MsingTP$, let $\OO_{\CCC} \in  \MU$ be the corresponding stable sheaf on $\UFhat$ so that $\pr_*(\OO_{\CCC}) = \FF$. Then 
\[\Ext^1(\FF,\FF) \cong \Ext^1(\OO_{\CCC},\OO_{\CCC}) \oplus \CC^2.\]
In particular, the dimensions of the deformation spaces have the same parity.
\label{prop:extdowntoup}
\end{proposition}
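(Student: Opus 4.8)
The plan is to compute $\Ext^1(\FF,\FF)$ by relating it to $\Ext$-groups upstairs on the universal cover via the adjunction $\pr^* \dashv \pr_*$ together with the étale/covering nature of $\pr:\UFs\to\Fsing$, and then to isolate the contribution of the infinite direct sum $\pr^*\FF = \bigoplus_{\mathbf{k}}(e^{\mathbf{k}})^*\FFF_0$ coming from the deck group $G = \ZZ\times\ZZ$. First I would use the fact that $\pr$ is a covering map (in particular étale, even in the formal setting by \cite[Prop.\ 24]{bryan19}) so that $\pr^*$ is exact and $\pr^*\pr_* = \bigoplus_{g\in G} g^*$, and that $\pr_*$ is exact on the locally-finite-support sheaves we consider. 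Adjunction then gives
\[
\Ext^i_{\Fhat}(\FF,\FF) = \Ext^i_{\Fhat}(\FF,\pr_*\FFF_0) \cong \Ext^i_{\UFhat}(\pr^*\FF, \FFF_0) = \Ext^i_{\UFhat}\!\Big(\bigoplus_{g\in G}g^*\FFF_0,\ \FFF_0\Big) \cong \prod_{g\in G}\Ext^i_{\UFhat}(g^*\FFF_0,\FFF_0).
\]
The main point is then that this product is actually a \emph{finite} sum: since $\CCC = \Supp\FFF_0$ is a compact (finite-type) curve of type $\Gamma$ and $g^*\CCC$ is its translate under a deck transformation, $\Supp(g^*\FFF_0)$ and $\Supp(\FFF_0)$ are disjoint unless $g$ translates $g^*\CCC$ so that it meets $\CCC$, which happens only for finitely many $g$ — in fact, by the local geometry of $\Cban$ described in Section~\ref{localbanana}, only for $g = \mathrm{id}$ and the (at most two, one in each of the two lattice directions through the relevant banana vertices) ``neighbor'' translates $g = e_i^{\pm 1}$ whose support-curves share exactly a torus-fixed point with $\CCC$.

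Next I would analyze each of these finitely many terms. For $g = \mathrm{id}$ we get the term $\Ext^1_{\UFhat}(\FFF_0,\FFF_0) = \Ext^1(\OO_\CCC,\OO_\CCC)$, which is exactly the deformation space appearing in the statement. For the neighbor translates, $g^*\CCC \cap \CCC$ is a single reduced point $x$ (a torus-fixed point on the banana configuration where a strand of $\CCC$ terminates and the translated strand begins from the ``other side''), so $\Ext^\bullet_{\UFhat}(g^*\FFF_0,\FFF_0)$ is a purely local computation in the formal neighborhood of $x$; it should vanish in degrees $0$ and $\geq 2$ and be $1$-dimensional in degree $1$ for each such neighbor. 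Accounting for the two lattice directions (equivalently, the two banana vertices $p$ and $q$, or the generators $e_1, e_2$ of $G$), the total neighbor contribution to $\Ext^1$ is $\CC^2$, giving
\[
\Ext^1_{\Fhat}(\FF,\FF) \cong \Ext^1(\OO_\CCC,\OO_\CCC)\ \oplus\ \CC^2.
\]
One should double-check the $\Ext^0$ and $\Ext^2$ bookkeeping: $\Ext^0(\FF,\FF) = \CC$ by stability (Lemma~\ref{lemma: nosemistable-euler} gives simplicity), which must match $\prod_g\Hom(g^*\FFF_0,\FFF_0)$; since $g^*\FFF_0$ and $\FFF_0$ for $g$ a neighbor have support meeting only in a point and $\FFF_0$ is pure $1$-dimensional, $\Hom$ between them vanishes, so $\Ext^0$ matches on the nose. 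The $\Ext^2$ terms from neighbors must also vanish, which follows from the same local analysis (or, a posteriori, from the symmetric obstruction theory, since $\Ext^2(\FF,\FF) \cong \Ext^1(\FF,\FF)^\vee$ and likewise for $\OO_\CCC$, forcing consistency).

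The hard part — the step I expect to require genuine care rather than formal nonsense — is the local computation at the neighbor vertices: making precise exactly which deck translates $g$ give a nonzero contribution, and computing $\Ext^\bullet_{\text{loc}}(g^*\OO_{\CCC}, \OO_{\CCC})$ at the shared point. This means working in the local model $\Spec\CC[x,y,z]/(\dots)$ of Section~\ref{localbanana} (or rather in the non-finite-type toric threefold $\mathfrak{W}$ formally locally isomorphic to $\UFhat$), writing down the two curve-germs that meet at the torus-fixed point, and doing a local-to-global $\Ext$ spectral-sequence / Koszul-type calculation to show the answer is exactly one dimension of $\Ext^1$ per direction, independent of the (possibly nonreduced) thickenings $m_e, n_e$. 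The cleanest route is probably to note that near such a vertex $\CCC$ and its translate look like two copies of ``$\OO(-1)\oplus\OO(-1)$ over $\PP^1$'' glued at a point, so the relevant $\Ext$ is governed by the normal-bundle data $H^\bullet(\PP^1,\OO(-1)\oplus\OO(-1))$ together with the structure-sheaf-of-a-point terms, and the $\CC^2$ is the ``once in each $\OO(-1)$-summand direction across the vertex'' contribution; this is the same mechanism by which the two generators $e_1, e_2$ of $\pi_1(\Fsing)$ enter. Everything else is bookkeeping via adjunction and the finiteness of the relevant index set.
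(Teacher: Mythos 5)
Your opening move --- the adjunction $\Ext^i_{\Fsing}(\FF,\FF)\cong\Ext^i_{\UFs}(\pr^*\FF,\FFF_0)\cong\prod_{g\in G}\Ext^i(g^*\FFF_0,\FFF_0)$ --- is sound, and the index set is indeed finite since $\CCC$ has finitely many edges and the $G$-orbit of any edge is locally finite. The gap is in the step you yourself flag as the hard one: your description of which translates contribute and how. It is not true that the only $g\neq\mathrm{id}$ with $\Supp(g^*\FFF_0)\cap\Supp(\FFF_0)\neq\emptyset$ are two ``neighbours'' meeting $\CCC$ in a single reduced torus-fixed point. The branches of $\CCC$ are chains of edges alternately covering $C_1$ and $C_2$, and the deck transformation corresponding to the loop $C_1\ast C_2^{-1}$ based at $q$ slides the branch $\CCC^{\sbullet}$ two steps along itself; for $|\CCC^{\sbullet}|\geq 3$ this translate shares entire irreducible components (whole thickened $\PP^1$'s) with $\CCC$, not just a point, and the same happens for many other $g$ when the branches are long. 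Your local model of two curve-germs glued at a point does not describe these overlaps, so the claimed count of one copy of $\CC$ in $\Ext^1$ per lattice direction is unjustified; the companion claims also need proof (already for two reduced transverse lines in a threefold the local $\EExt^2$ at the intersection point is nonzero, so the vanishing of the neighbour $\Ext^{\geq2}$ is not automatic). The total $\prod_{g\neq\mathrm{id}}\Ext^1(g^*\FFF_0,\FFF_0)$ does come out to $\CC^2$, but establishing that along your route requires controlling all of these higher-dimensional intersections, which is essentially as hard as the proposition itself.

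The paper sidesteps this entirely by staying equivariant: a point of $\MsingTP$ is the $G$-sheaf $(\FFF,\phi_1,\phi_2)$ with $[\phi_1,\phi_2]=0$, and forgetting the equivariant structure gives an exact sequence $0\to\Ker\to\Def(\FFF,\phi_1,\phi_2)\to\Def(\FFF_0)\to0$ whose kernel consists of first-order deformations $(\eta_1,\eta_2)$ of the two lifts. Since $\Hom_G(\FFF,e_i^*\FFF)\cong\Hom(\FFF_0,\FFF_0)=\CC$ by stability, the commutator constraint is vacuous and the kernel is $\CC\times\CC$. In other words the extra $\CC^2$ is $H^1(\ZZ\times\ZZ;\Hom(\FFF_0,\FFF_0))=H^1(\ZZ\times\ZZ;\CC)$, a global group-cohomological contribution of the deck group, rather than a sum of local $\Ext$'s at two banana vertices. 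Your closing intuition that the $\CC^2$ is ``how the two generators of $\pi_1(\Fsing)$ enter'' is the right one, but the equivariant formulation is what turns it into a one-line computation; if you want to salvage the adjunction route you would need to repackage the product over $g$ as group cohomology rather than evaluate its factors one at a time.
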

\begin{proof}
Fix $\FF\in \MsingTP$. Recall the proof of Proposition~\ref{prop:atomic}. Under the general categorical equivalence of sheaves on $\Fsing$ with $G\coloneqq\ZZ\times\ZZ$ equivariant sheaves on $\UFhat$, deformations of $\FF$ correspond to deformations of the corresponding $G$-sheaf $\pr^*\FF = (\FFF, \phi_1, \phi_2)$, $[\phi_1,\phi_2]=0$. We can separate the deformations of the sheaf from the deformations of the lift of the action by considering the linear map between deformation spaces which forgets the equivariant part of the sheaf:
\[ 0\to \Ker \to \Def (\FFF, \phi_1, \phi_2) \to \Def (\FFF_0)\to 0.\]
The kernel consists of deformations of the linear maps $\phi_i\in \Hom(\FFF, e_i^*\FFF)$. These are given by pairs, 
\[ \{(\phi_1+\epsilon\eta_1 ,\phi_2+\epsilon\eta_2 )\}, \qquad \eta_i\in \Hom(\FFF, e_i^*\FFF), \ \epsilon^2=0,
\] 
which cover the group action, so $[\phi_1 +\epsilon\eta_1,\phi_2+\epsilon\eta_2]=0$.
In other words
\[\Ker =\Def(\phi_1,\phi_2) = \{(\eta_1,\eta_2 )| [\eta_1,\phi_2] + [\eta_2,\phi_1]=0\}
\]

From Proposition~\ref{prop:atomic} the sheaves in $\MU$ are of the special form satisfying Eq. (\ref{eq:atomictranslates}), and so $\FFF \cong e_i^*\FFF$. Observe that in $\Coh^{G}\UFhat$ we can re-index, and then by equivariance and stability, we get
\[\Hom_{G}(\FFF, e_i^*\FFF) \cong \Hom_{G}(\FFF, \FFF) \cong \Hom(\FFF_0, \FFF_0) \cong \CC.\]
So the commutator relation is trivial and $\{(\eta_1,\eta_2)\} = \CC\times\CC$.
\end{proof}

\subsection{Computing deformations on {\mdseries$\UFhat$}} 
Let ${\CCC}$ be the support of a point $[\OO_{\CCC}]\in \MU$. To apply Proposition~\ref{prop:extdowntoup}, we need to calculate the parity of the dimension of $\Ext^1(\OO_{\CCC},\OO_{\CCC})$. We will do this by reducing our computation to the result in \cite[Theorem 2]{mnop1}. We work in an ambient toric Calabi-Yau threefold, which we describe below.

For a fixed degree $\beta = (d_1,d_2,1)$, the support ${\CCC}$ of any stable sheaf in $\MU$ is contained in a finite type region of $\UFhat$. Following the discussion in Subsection~\ref{geometry}, such a region is formally locally isomorphic to some ambient smooth finite type toric Calabi-Yau threefold $W\subset \mathfrak{W}$, whose fan consists of the cones over the finitely many tiles of Figure~\ref{fig:triangle_tile} that contain $\Supp(\CCC)$. We may thus compute the infinitesimal deformations of sheaves in $\MU$ by considering them as sheaves on $W$.
\begin{definition} Let $W$ be a smooth finite type toric Calabi-Yau threefold formally locally isomorphic to a formal neighborhood of $\Supp(\CCC)$ for $\CCC\in\MU$.
\end{definition}
For the remainder of this section, we will work on the space $W$ for the computations of $\Ext$ and $\Hom$ groups. With this understanding, we will often suppress the subscript $W$ and write $\Ext\coloneqq\Ext_{W}$ and $\Hom\coloneqq\Hom_{W}$.

Furthermore, in \cite{mnop1} Maulik et al consider ideal sheaves, whereas we are interested in structure sheaves. So we will also need the following Lemma~\ref{lemma:icic}, but we defer its proof until after Proposition~\ref{prop:behrend}.
\begin{lemma}
$\Ext^1_{W}(\OO_{\CCC},\OO_{\CCC}) \cong \Ext^1_{W}(\II_{\CCC},\II_{\CCC})$, where $\II_{\CCC}$ is the ideal sheaf of ${\CCC}$ in $W$.
\label{lemma:icic}
\end{lemma}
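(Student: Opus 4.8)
The plan is to compare the local Ext groups of $\OO_{\CCC}$ and $\II_{\CCC}$ on the smooth Calabi--Yau threefold $W$ by exploiting the defining short exact sequence $0 \to \II_{\CCC} \to \OO_W \to \OO_{\CCC} \to 0$ and the fact that $W$ is a \emph{local} (non-compact) Calabi--Yau threefold, so that cohomology of line bundles and of $\OO_W$ itself has a very restricted shape. First I would apply $\Hom_W(-,\OO_{\CCC})$ and $\Hom_W(\II_{\CCC},-)$ to this sequence to relate $\Ext^i_W(\OO_{\CCC},\OO_{\CCC})$, $\Ext^i_W(\OO_{\CCC},\OO_W)$, $\Ext^i_W(\OO_{\CCC},\II_{\CCC})$ and $\Ext^i_W(\II_{\CCC},\OO_{\CCC})$; then doing the same with $\Hom_W(-,\OO_W)$ applied to the sequence to bring in $\Ext^i_W(\OO_{\CCC},\OO_W)$, $H^i(\OO_W)$ and $\Ext^i_W(\II_{\CCC},\OO_W)$. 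Since $\CCC$ is a one-dimensional subscheme of the threefold $W$, the local-to-global spectral sequence together with $\dim\CCC = 1$ forces $\Ext^i_W(\OO_{\CCC},\OO_W)$ to be concentrated in degrees $i\ge 2$ (the sheaf $\EExt^i_W(\OO_{\CCC},\OO_W)$ vanishes for $i<2$ because $\CCC$ has codimension $2$, and it is supported on $\CCC$ so only $H^0$ and $H^1$ of it survive); hence $\Ext^0_W(\OO_{\CCC},\OO_W) = \Ext^1_W(\OO_{\CCC},\OO_W) = 0$. This is the key vanishing that makes the comparison work.

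With that input, the long exact sequences collapse: from $\Hom_W(\OO_{\CCC},-)$ applied to $0\to\II_{\CCC}\to\OO_W\to\OO_{\CCC}\to0$ and the vanishing of $\Ext^{0,1}_W(\OO_{\CCC},\OO_W)$, I get $\Ext^i_W(\OO_{\CCC},\II_{\CCC}) \cong \Ext^i_W(\OO_{\CCC},\OO_{\CCC})$ for $i=0$, and an exact sequence in low degrees relating the $i=1$ terms, whose error term is again controlled by $\Ext^{1,2}_W(\OO_{\CCC},\OO_W)$. Dually, applying $\Hom_W(-,\OO_{\CCC})$ and using Serre-type duality on the Calabi--Yau $W$ (or just the same codimension argument for $\EExt^i_W(\II_{\CCC},\OO_{\CCC})$ against $\EExt^i_W(\OO_{\CCC},\OO_{\CCC})$), one identifies $\Ext^1_W(\II_{\CCC},\OO_{\CCC})$ with $\Ext^1_W(\OO_{\CCC},\OO_{\CCC})$. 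Finally, applying $\Hom_W(\II_{\CCC},-)$ to the defining sequence and using that $W$ is a genuinely open Calabi--Yau threefold (so $H^i(\OO_W) = 0$ for $i>0$ and $H^0(\OO_W) = \CC$, and correspondingly $\Ext^i_W(\II_{\CCC},\OO_W)$ is computed from $H^i(\OO_W)$ plus boundary terms that vanish because $\II_{\CCC}$ agrees with $\OO_W$ away from $\CCC$) yields $\Ext^1_W(\II_{\CCC},\II_{\CCC}) \cong \Ext^1_W(\II_{\CCC},\OO_{\CCC})$. Chaining these isomorphisms gives $\Ext^1_W(\OO_{\CCC},\OO_{\CCC}) \cong \Ext^1_W(\II_{\CCC},\II_{\CCC})$.

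\textbf{Main obstacle.}
The delicate point is not the formal bookkeeping of long exact sequences but justifying the vanishing statements on the \emph{non-proper} toric threefold $W$: one needs $H^i(W,\OO_W)$ and the relevant $\Ext$ groups with $\OO_W$ to behave as on affine-like space rather than on a compact Calabi--Yau, where $H^3(\OO) = \CC$ would spoil the argument. I would handle this by invoking the explicit toric description from Subsection~\ref{geometry}: $W$ is obtained as cones over finitely many triangles of Figure~\ref{fig:triangle_tile}, a smooth toric variety that deformation-retracts onto a union of compact toric surfaces and curves, and whose structure sheaf cohomology vanishes in positive degrees by the standard toric computation (or by noting $W$ admits a proper map to an affine variety with higher direct images vanishing). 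The secondary subtlety is keeping track of the precise low-degree error terms so that the stray $\Ext^2_W(\OO_{\CCC},\OO_W)$-type contributions cancel between the two comparison steps rather than contributing spuriously; here the symmetry of the Calabi--Yau (Serre duality exchanging $\Ext^i$ and $\Ext^{3-i}(-,-)^{\vee}$, modulo the non-properness caveat handled with compactly supported cohomology) is what makes everything line up, and this is the step I would write out most carefully.
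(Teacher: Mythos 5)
Your overall strategy coincides with the paper's: run the long exact sequences obtained from $0\to\II_{\CCC}\to\OO_W\to\OO_{\CCC}\to 0$ by applying $\Hom(-,\OO_{\CCC})$, $\Hom(-,\OO_W)$ and $\Hom(\II_{\CCC},-)$, feed in the vanishings $\Hom(\OO_{\CCC},\OO_W)=\Ext^1(\OO_{\CCC},\OO_W)=0$ and $H^{>0}(\OO_W)=0$ (the latter via the proper map to the affinization, exactly as in the paper), and chain the resulting identifications. But the chain you actually write down is off by one in degree at both comparison steps, so the intermediate isomorphisms do not follow from the sequences you cite. Applying $\Hom(-,\OO_{\CCC})$ and using $\Hom(\OO_{\CCC},\OO_{\CCC})\cong\Hom(\OO_W,\OO_{\CCC})=\CC$ together with $\Ext^1(\OO_W,\OO_{\CCC})=H^1(\OO_{\CCC})=0$ identifies $\Ext^1(\OO_{\CCC},\OO_{\CCC})$ with $\Hom(\II_{\CCC},\OO_{\CCC})$, not with $\Ext^1(\II_{\CCC},\OO_{\CCC})$; that latter group sits in the same sequence one step further along and is isomorphic to $\Ext^2(\OO_{\CCC},\OO_{\CCC})$. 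Likewise, applying $\Hom(\II_{\CCC},-)$ and using $\Ext^1(\II_{\CCC},\OO_W)=0$ relates $\Ext^1(\II_{\CCC},\II_{\CCC})$ to $\Hom(\II_{\CCC},\OO_{\CCC})$, not to $\Ext^1(\II_{\CCC},\OO_{\CCC})$. The correct pivot is the degree-zero group $\Hom(\II_{\CCC},\OO_{\CCC})$; your pivot $\Ext^1(\II_{\CCC},\OO_{\CCC})$ can only be tied back to the two groups in the statement by invoking Serre duality on the non-proper $W$ at both ends, which is precisely the ``non-properness caveat'' you flag but do not resolve, and the auxiliary vanishings it would require can fail (e.g.\ $\Ext^2(\II_{\CCC},\OO_W)\cong\Ext^3(\OO_{\CCC},\OO_W)$, which is dual to $H^0(\OO_{\CCC})=\CC$ and hence nonzero). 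Your parenthetical claim that $\Hom(\OO_{\CCC},\II_{\CCC})\cong\Hom(\OO_{\CCC},\OO_{\CCC})$ is a symptom of the same slip: the left side is $0$ and the right side is $\CC$.

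Two further points. First, once the degrees are fixed, concluding $\Hom(\II_{\CCC},\OO_{\CCC})\cong\Ext^1(\II_{\CCC},\II_{\CCC})$ from the $\Hom(\II_{\CCC},-)$ sequence requires the map $\Hom(\II_{\CCC},\II_{\CCC})\to\Hom(\II_{\CCC},\OO_W)$ to be surjective; the paper establishes this by identifying both groups with $R=H^0(\OO_W)$ acting by multiplication, and without it you only get that $\Ext^1(\II_{\CCC},\II_{\CCC})$ is a quotient of $\Hom(\II_{\CCC},\OO_{\CCC})$. Second, your derivation of $\Hom(\OO_{\CCC},\OO_W)=\Ext^1(\OO_{\CCC},\OO_W)=0$ from the codimension of the support via local $\EExt$ is a perfectly good alternative to the paper's equivariant Serre duality argument, and your treatment of $H^{>0}(\OO_W)=0$ matches the paper's; but you nowhere note that $H^1(\OO_{\CCC})=0$ (which follows from stability and $\chi(\OO_{\CCC})=1$), and this vanishing is needed to collapse the $\Hom(-,\OO_{\CCC})$ sequence.
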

\begin{proof}
See following the proof of Proposition~\ref{prop:behrend}.
\end{proof}
\begin{proposition}\label{prop:behrend}
The dimension of $\Ext^1(\OO_{\CCC},\OO_{\CCC})$ is $d_1+d_2 \mod 2.$
\end{proposition}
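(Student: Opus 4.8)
The plan is to reduce the computation of $\dim\Ext^1_W(\OO_{\CCC},\OO_{\CCC})$ to a Euler-characteristic (virtual-dimension) count, which is forced to be even by the Calabi--Yau symmetry, and then track the single parity-changing correction term coming from the difference between $\chi$ and $\dim\Ext^1$. Concretely, by Lemma~\ref{lemma:icic} it suffices to work with the ideal sheaf $\II_{\CCC}\subset\OO_W$, i.e. with the point of the Hilbert scheme of $W$ parametrizing $\CCC$. On the smooth toric Calabi--Yau threefold $W$ the deformation/obstruction theory of $\Hilb(W)$ at $[\CCC]$ is governed by $\Ext^1_W(\II_{\CCC},\II_{\CCC})_0$ and $\Ext^2_W(\II_{\CCC},\II_{\CCC})_0$ (traceless parts), and Serre duality with $K_W\cong\OO_W$ gives $\Ext^2_W(\II_{\CCC},\II_{\CCC})_0\cong \Ext^1_W(\II_{\CCC},\II_{\CCC})_0^{\vee}$. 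Hence the two obstruction-theory dimensions agree and, modulo the contribution of $\Ext^0$ and $\Ext^3$, the parity of $\dim\Ext^1$ is the parity of $\chi_W(\OO_{\CCC},\OO_{\CCC}):=\sum_i(-1)^i\dim\Ext^i_W(\OO_{\CCC},\OO_{\CCC})$ up to an explicit correction.

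Next I would compute that Euler pairing. Since $W$ is a smooth quasi-projective threefold, $\chi_W(\OO_{\CCC},\OO_{\CCC})$ is a topological/K-theoretic quantity depending only on the Chern character of $\OO_{\CCC}$, which in turn depends only on $\beta=(d_1,d_2,1)$ and $\chi(\OO_{\CCC})=1$. This is exactly the point where \cite[Theorem 2]{mnop1} enters: for ideal sheaves of curves in a toric Calabi--Yau threefold the relevant Ext-pairing (equivalently the virtual dimension of the Hilbert scheme) is computed vertex-by-vertex and edge-by-edge from the combinatorial data — the partitions labelling the edges and the three-dimensional partitions at the vertices. For our curves $\CCC$ the edge and vertex data were made completely explicit in Section~\ref{sec: count} and constrained in Proposition~\ref{prop:constraint}: all outside multiplicities are $1$, inside multiplicities are non-increasing along each branch, and consecutive pairs differ by at most one. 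I would plug these into the MNOP local formula and sum the contributions over the finitely many tiles meeting $\Supp(\CCC)$. The claim is that this sum, after accounting for $\dim\Ext^0=\dim\Ext^3=1$ (from stability/simplicity of $\OO_{\CCC}$ and Serre duality) and the traceless-vs.-full normalization, reduces mod $2$ to $d_1+d_2$; equivalently, each unit of inside multiplicity along a branch covering $[C_1]$ or $[C_2]$ contributes an odd amount to $\dim\Ext^1$, while the even parts contribute evenly.

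The main obstacle I expect is the bookkeeping in the last step: matching the MNOP edge/vertex contributions to the $(m_i,n_i,r_i,s_i)$ parametrization of our curves and verifying that, after the Calabi--Yau-forced cancellation of the self-dual part, the residual parity is precisely $d_1+d_2$ and not some other linear (or quadratic) function of $\mathbf d$. There are two things to watch carefully here: first, the degenerate (valence-two) vertices where an axis is missing, for which one must use the convention ``empty edge has thickenings $0$ and $1$'' from Section~\ref{localbanana} so that the MNOP vertex term still applies; second, the fact that we have four branches attached to $e_0$ at the two points $p_0,q_0$, so the global $\Ext^1$ is not simply the sum over branches — there are correction terms at $p_0$ and $q_0$ exactly as in the proof of Lemma~\ref{lemma:anycurve}, and one must check these corrections contribute evenly. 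Once those combinatorial identities are in hand the parity statement, and hence the sign $(-1)^{d_1+d_2}$ of Remark~\ref{rem:naive}, follows. (The deferred Lemma~\ref{lemma:icic} itself I would prove afterward by applying $\HHom(-,\OO_{\CCC})$ and $\HHom(\OO_W,-)$ to $0\to\II_{\CCC}\to\OO_W\to\OO_{\CCC}\to0$ and chasing the resulting long exact sequences, using $\Ext^i_W(\OO_W,\OO_W)=H^i(\OO_W)$ and purity of $\OO_{\CCC}$ to kill the cross terms in the relevant range.)
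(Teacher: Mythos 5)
There is a genuine gap here, on two levels. First, the reduction you set up in your opening paragraph cannot work: on the Calabi--Yau threefold $W$, Serre duality gives $\Ext^i(\OO_{\CCC},\OO_{\CCC})\cong\Ext^{3-i}(\OO_{\CCC},\OO_{\CCC})^{\vee}$, so the Euler pairing $\chi_W(\OO_{\CCC},\OO_{\CCC})=\sum_i(-1)^i\dim\Ext^i(\OO_{\CCC},\OO_{\CCC})$ vanishes identically and carries no information about the parity of $\dim\Ext^1$. The parity is not a Riemann--Roch/K-theoretic quantity one can read off from $\ch(\OO_{\CCC})$; it genuinely requires the equivariant vertex analysis. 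Second, the step you flag as ``the main obstacle'' --- verifying that the sum of MNOP edge and vertex contributions reduces mod $2$ to $d_1+d_2$ --- is the entire content of the proposition, and you leave it undone. As written, the proposal identifies the right inputs (Lemma~\ref{lemma:icic} plus \cite[Theorem 2]{mnop1}) but does not produce a proof.

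The paper's proof shows that none of that bookkeeping is necessary, because \cite[Theorem 2]{mnop1} already packages the answer in closed form: restricting the equivariant Euler classes to the Calabi--Yau subtorus yields
$\dim\Ext^1_W(\II_{\CCC},\II_{\CCC})\equiv\chi(\OO_{\CCC})+\sum_{C_i}m_{C_i}d_{C_i}\pmod 2$,
where the sum runs over the irreducible components $C_i$ of the support, $m_{C_i}$ is determined by the normal bundle $\OO(-m_i)\oplus\OO(m_i-2)$, and $d_{C_i}$ is the multiplicity. All the vertex contributions (including the degenerate valence-two vertices and the corrections at $p_0,q_0$ you worry about) have already been shown in \cite{mnop1} to cancel mod $2$; only the edge terms survive. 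Since every component here is a $(-1,-1)$-curve, all $m_{C_i}=1$, the total degree is $d_1+d_2+1$, and $\chi(\OO_{\CCC})=1$, so the parity is $1+(d_1+d_2+1)\equiv d_1+d_2$ immediately. If you want to pursue your route, you would essentially be re-proving that cancellation; the efficient path is to quote the closed-form parity formula and substitute the numerical data of $\CCC$.
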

\begin{proof}  
We apply the formula of \cite[Theorem 2]{mnop1} to compute the dimension of the tangent space. This result was proved with $T^3\coloneqq (\CC^*)^3$-equivariant cohomology. By equivariant Serre duality and restriction to the Calabi-Yau torus $T \cong (\CC^*)^2 \subset T^3$ we get the equality:
\[
\frac{e(\Ext_{W}^1(\II_{\CCC},\II_{\CCC}))}{e(\Ext_{W}^2(\II_{\CCC},\II_{\CCC}))}\Bigg|_{T} = (-1)^{\dim \Ext^1_W (\II_{\CCC},\II_{\CCC}) },
\]
where
\[{\dim\Ext^1_{W}(\II_{\CCC},\II_{\CCC})} 
\equiv {\chi(\OO_{\CCC}) +\sum_{C_i}{m_{C_i}d_{C_i}}} \quad(\text{mod 2}), \quad {\CCC}=\cup {C_i}.
\]
The sum is taken over irreducible components $C_i$ in the support, each of which has normal bundle $\OO(-m_i)\oplus \OO(-2+m_i)$ and length $d_{C_i}$. In our situation, we have that $\chi({\CCC})=1$, all $m_{C_i}=1$, and the total degree is ${d_1+d_2+1}$.
\end{proof}

To complete the proof of Proposition~\ref{prop:behrend}, we need to prove Lemma~\ref{lemma:icic}. After some preliminary calculations, we will prove Lemma~\ref{lemma:icic} by showing two separate isomorphisms,
$\Ext^1(\OO_{\CCC}, \OO_{\CCC}) \cong \Hom(\II_{\CCC},\OO_{\CCC})$ and $
\Hom(\II_{\CCC},\OO_{\CCC})\cong \Ext^1(\II_{\CCC},\II_{\CCC})$, which we deduce from different long exact sequences.

We begin with some preliminary observations that follow from our geometry.
\begin{lemma} \label{lemma:homologicalgeometry}
With the notation as above, we have the following equations.
\begin{equation}\label{eq:homoxoc}
\Hom(\OO_{W} ,\OO_{\CCC})=\Hom(\OO_{\CCC} ,\OO_{\CCC})=\CC,
\end{equation}
\begin{equation}\label{eq:ext1oxoc}
\Ext^1(\OO_{W},\OO_{\CCC}) = 0,
\end{equation}
\begin{equation}\label{eq:homintoOx-2}
\Ext^2(\OO_{\CCC},\OO_{W})=\Ext^1(\OO_{\CCC},\OO_{W})=\Hom(\OO_{\CCC} ,\OO_{W}) =0,
\end{equation}

\end{lemma}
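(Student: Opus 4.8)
The plan is to deduce all three identities from two inputs that are already available: the stability of $\OO_\CCC$ (equivalently, via Lemma~\ref{lemma: nosemistable-euler}, the Euler-characteristic characterization) together with $\chi(\OO_\CCC)=1$ from Proposition~\ref{prop:anyrankdeg}, and the geometric fact that $\CCC$ sits in the smooth Calabi-Yau threefold $W$ as a proper, pure $1$-dimensional --- hence Cohen-Macaulay of pure codimension $2$ --- subscheme, with $\omega_W\cong\OO_W$.

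First I would establish Equation~\eqref{eq:homoxoc}. Stability makes $\OO_\CCC$ simple: a nonzero endomorphism has image a nonzero quotient, so $\chi\geq 1$, and also a nonzero subsheaf, so $\chi\leq 0$; hence the image is all of $\OO_\CCC$, the map is surjective and therefore an isomorphism, so $\Hom_W(\OO_\CCC,\OO_\CCC)$ is a finite-dimensional division $\CC$-algebra, i.e. $\CC$. For the first equality, $\Hom_W(\OO_W,\OO_\CCC)=H^0(\CCC,\OO_\CCC)$ is a finite-dimensional $\CC$-algebra (as $\CCC$ is proper) acting faithfully on $\OO_\CCC$ by multiplication; this gives a unital ring injection $H^0(\CCC,\OO_\CCC)\hookrightarrow \Hom_W(\OO_\CCC,\OO_\CCC)=\CC$, and since the constants already sit inside $H^0(\CCC,\OO_\CCC)$ we conclude $H^0(\CCC,\OO_\CCC)=\CC$.

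Next, Equation~\eqref{eq:ext1oxoc} is immediate: since $\OO_W$ is locally free, $\Ext^1_W(\OO_W,\OO_\CCC)=H^1(W,\OO_\CCC)=H^1(\CCC,\OO_\CCC)$, and from $h^0(\OO_\CCC)=1$ (just proved) and $\chi(\OO_\CCC)=h^0-h^1=1$ we get $h^1(\OO_\CCC)=0$. For Equation~\eqref{eq:homintoOx-2}, the $\Hom$ term vanishes because $\OO_\CCC$ is torsion while $\OO_W$ is torsion-free; for the $\Ext^1$ and $\Ext^2$ terms I would argue locally, using that $\CCC$ is Cohen-Macaulay of codimension $2$, so $\EExt^i_W(\OO_\CCC,\OO_W)=0$ for $i\neq 2$ and $\EExt^2_W(\OO_\CCC,\OO_W)\cong\omega_\CCC$ (using $\omega_W\cong\OO_W$). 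The local-to-global spectral sequence then yields $\Ext^1_W(\OO_\CCC,\OO_W)=0$ and $\Ext^2_W(\OO_\CCC,\OO_W)\cong H^0(\CCC,\omega_\CCC)$, which by Serre duality on the proper curve $\CCC$ is dual to $H^1(\CCC,\OO_\CCC)=0$. Alternatively one can invoke Serre duality for coherent sheaves with proper support on the smooth threefold $W$ directly, $\Ext^i_W(\OO_\CCC,\OO_W)\cong H^{3-i}(\CCC,\OO_\CCC)^\vee$, which vanishes for $i=0,1,2$ since $\dim\CCC=1$ and, when $i=2$, by Equation~\eqref{eq:ext1oxoc}.

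The genuinely nontrivial step is the first one: because $\CCC$ is generally non-reduced, $h^0(\OO_\CCC)=1$ does not follow from connectedness alone, and it is exactly stability --- through the faithful multiplication action of $H^0(\OO_\CCC)$ on $\OO_\CCC$ --- that forces it; once this is secured, Equations~\eqref{eq:ext1oxoc} and~\eqref{eq:homintoOx-2} are formal consequences. A minor technical point is that $W$ is non-compact, which I avoid by systematically reducing every cohomology or duality statement to the proper subscheme $\CCC$, or to a purely local $\EExt$ computation.
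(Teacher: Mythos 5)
Your proposal is correct and follows the paper's proof essentially step for step: stability/simplicity of $\OO_{\CCC}$ gives \eqref{eq:homoxoc}, the identity $\chi(\OO_{\CCC})=h^0-h^1=1$ combined with $h^0(\OO_{\CCC})=1$ gives \eqref{eq:ext1oxoc}, and duality on the Calabi--Yau $W$ gives \eqref{eq:homintoOx-2}. The only variation is in the last step, where your primary route goes through the local sheaves $\EExt^i_W(\OO_{\CCC},\OO_W)$ (using that $\CCC$ is Cohen--Macaulay of codimension $2$) together with Serre duality on the proper curve $\CCC$ — your stated ``alternative'' is exactly the paper's $T^3$-equivariant Serre duality computation — and this, if anything, treats the non-properness of $W$ more carefully than the paper does.
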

\begin{proof} 
Eq.~(\ref{eq:homoxoc}) follows from stability,
\[\Hom(\OO_{W} ,\OO_{\CCC})=\Hom(\OO_{\CCC} ,\OO_{\CCC}) = H^0(\OO_{\CCC}) = \CC.
\]
Then since our support curve $\CCC$ is assumed to have $\chi(\OO_{\CCC})=1$, we get Eq.~(\ref{eq:ext1oxoc}),
\[\Ext^1(\OO_{W},\OO_{\CCC}) = H^1(\OO_{\CCC}) = 0.\]
Next, for Eq.~(\ref{eq:homintoOx-2}) we compute 
\begin{align*}
\Ext^2(\OO_{\CCC},\OO_{W}) &=  \Ext^1(\OO_{W},\OO_{\CCC} \otimes K_{W})^{\vee} &\text{by $T^3$-equivariant Serre duality}\\
&= \Ext^1(\OO_{W},\OO_{\CCC})^{\vee}&\text{since $W$ is Calabi-Yau}\\
&=H^1(\OO_{\CCC})^{\vee}  & \\
&=0  &\text{by Eq.~(\ref{eq:ext1oxoc}).}
\end{align*}
Similarly, we use $T^3$-equivariant Serre duality and $W$ being a Calabi-Yau threefold for the other equations:
\begin{align*}
\Ext^1(\OO_{\CCC},\OO_{W}) &=H^2(\OO_{\CCC})^{\vee}=0, &\text{since $C$ has dimension 1.}\\
\Hom(\OO_{\CCC} ,\OO_{W}) &= H^3(\OO_{\CCC})^{\vee}=0 &\text{since $C$ has dimension 1.}
\end{align*}
\end{proof}

The first isomorphism we need to prove is the following.
\begin{lemma}
\label{lemma:exthomforOc}
Let the notation be as above. Then,
\begin{equation}
\Ext^1(\OO_{\CCC}, \OO_{\CCC}) \cong \Hom(\II_{\CCC},\OO_{\CCC}) 
\label{eq:exthomforOc}
\end{equation}
\end{lemma}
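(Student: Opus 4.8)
\textbf{Setup.} The plan is to work with the structure sequence $0\to\II_{\CCC}\to\OO_W\to\OO_{\CCC}\to 0$ and apply the functors $\Hom(-,\OO_{\CCC})$ and $\Hom(\OO_{\CCC},-)$ to extract the long exact sequences. Applying $\HHom(-,\OO_{\CCC})$ (equivalently $R\Hom$, then taking cohomology) to the structure sequence gives the long exact sequence
\[
0\to\Hom(\OO_{\CCC},\OO_{\CCC})\to\Hom(\OO_W,\OO_{\CCC})\to\Hom(\II_{\CCC},\OO_{\CCC})\to\Ext^1(\OO_{\CCC},\OO_{\CCC})\to\Ext^1(\OO_W,\OO_{\CCC})\to\cdots.
\]
By Eq.~(\ref{eq:homoxoc}) of Lemma~\ref{lemma:homologicalgeometry}, the first map $\Hom(\OO_{\CCC},\OO_{\CCC})\to\Hom(\OO_W,\OO_{\CCC})$ is an isomorphism (both are $\CC$, and the map is the one induced by $\OO_W\twoheadrightarrow\OO_{\CCC}$, which is nonzero hence an iso). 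By Eq.~(\ref{eq:ext1oxoc}), $\Ext^1(\OO_W,\OO_{\CCC})=0$. Exactness then immediately forces the connecting-type map $\Hom(\II_{\CCC},\OO_{\CCC})\to\Ext^1(\OO_{\CCC},\OO_{\CCC})$ to be an isomorphism, which is precisely Eq.~(\ref{eq:exthomforOc}).

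\textbf{Steps in order.} First I would write down the structure sequence for $\CCC\subset W$ and observe $\II_{\CCC}$ is a coherent sheaf on $W$. Second, apply $\Hom_W(-,\OO_{\CCC})$ and record the six-term initial segment of the long exact sequence displayed above. Third, invoke Lemma~\ref{lemma:homologicalgeometry}: Eq.~(\ref{eq:homoxoc}) to identify the leftmost map as an isomorphism $\CC\xrightarrow{\sim}\CC$ (noting it is induced by the surjection $\OO_W\to\OO_{\CCC}$, hence nonzero), and Eq.~(\ref{eq:ext1oxoc}) to kill $\Ext^1(\OO_W,\OO_{\CCC})$. Fourth, conclude by exactness that the middle arrow $\Hom(\II_{\CCC},\OO_{\CCC})\xrightarrow{\sim}\Ext^1(\OO_{\CCC},\OO_{\CCC})$ is an isomorphism. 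All of these are formal once the vanishing inputs are in hand.

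\textbf{Main obstacle.} The genuine content is entirely in Lemma~\ref{lemma:homologicalgeometry}, which is already established, so there is little obstruction here; the one point requiring a small argument is that the map $\Hom(\OO_{\CCC},\OO_{\CCC})\to\Hom(\OO_W,\OO_{\CCC})$ is an isomorphism of one-dimensional spaces rather than merely an injection of equal-dimensional spaces. This follows because the map is precomposition with $\OO_W\twoheadrightarrow\OO_{\CCC}$, and the image of the identity of $\OO_{\CCC}$ is the nonzero quotient map $\OO_W\to\OO_{\CCC}$; hence the map is injective between $\CC$'s and therefore bijective. With that observation, exactness does the rest, and Eq.~(\ref{eq:exthomforOc}) follows. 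This isomorphism, combined with the companion isomorphism $\Hom(\II_{\CCC},\OO_{\CCC})\cong\Ext^1(\II_{\CCC},\II_{\CCC})$ obtained from applying $\Hom(\II_{\CCC},-)$ to the structure sequence together with the vanishings in Eq.~(\ref{eq:homintoOx-2}), will then yield Lemma~\ref{lemma:icic}.
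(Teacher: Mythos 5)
Your proposal is correct and follows the same route as the paper: apply $\Hom(-,\OO_{\CCC})$ to the structure sequence $0\to\II_{\CCC}\to\OO_W\to\OO_{\CCC}\to 0$ and use Eqs.~(\ref{eq:homoxoc}) and (\ref{eq:ext1oxoc}) to collapse the long exact sequence. Your extra remark that the leftmost map is an isomorphism is fine, though it already follows from exactness (injectivity) together with both spaces being one-dimensional.
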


\begin{proof}

We start with the exact sequence on $W$:
\begin{equation}
0\to\II_{\CCC} \to\OO_{W} \to \OO_{\CCC} \to 0. 
\label{eq:standardidealseq}
\end{equation}
If we apply $\Hom(\,\bigcdot\, ,\OO_{\CCC})$ to Eq.~(\ref{eq:standardidealseq}), we get the long exact sequence:
\begin{align}
0\to &\Hom(\OO_{\CCC} ,\OO_{\CCC})  \to\Hom(\OO_{W},\OO_{\CCC}) \to \Hom(\II_{\CCC},\OO_{\CCC}) \to  \nonumber\\
\to &\Ext^1(\OO_{\CCC} ,\OO_{\CCC})  \to\Ext^1(\OO_{W},\OO_{\CCC}) \to \cdots
\label{eq:homtoOc} 
\end{align}
Using Eq.~(\ref{eq:homoxoc}) and Eq.~(\ref{eq:ext1oxoc}) from Lemma~\ref{lemma:homologicalgeometry} in the long exact sequence Eq.~(\ref{eq:homtoOc}) gives
\[ 
\Hom(\II_{\CCC},\OO_{\CCC})\cong\Ext^1(\OO_{\CCC}, \OO_{\CCC})
\] 
as required.
\end{proof}

The second isomorphism is below.
\begin{lemma}
Let the notation be as above. Then,
\[
\Hom(\II_{\CCC},\OO_{\CCC})\cong \Ext^1(\II_{\CCC},\II_{\CCC})
\]
\label{lemma:exthomforIc}
\end{lemma}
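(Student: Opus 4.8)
The plan is to apply $\Hom(\II_\CCC, -)$ to the standard ideal sequence Eq.~(\ref{eq:standardidealseq}) and extract the desired isomorphism from the resulting long exact sequence, using the vanishing statements of Lemma~\ref{lemma:homologicalgeometry} together with the already-established isomorphism Eq.~(\ref{eq:exthomforOc}) of Lemma~\ref{lemma:exthomforOc}. Concretely, applying $\Hom(\II_\CCC, -)$ to
\[
0 \to \II_\CCC \to \OO_W \to \OO_\CCC \to 0
\]
yields the long exact sequence
\begin{align*}
0 &\to \Hom(\II_\CCC, \II_\CCC) \to \Hom(\II_\CCC, \OO_W) \to \Hom(\II_\CCC, \OO_\CCC) \\
&\to \Ext^1(\II_\CCC, \II_\CCC) \to \Ext^1(\II_\CCC, \OO_W) \to \cdots.
\end{align*}
So the claimed isomorphism $\Hom(\II_\CCC, \OO_\CCC) \cong \Ext^1(\II_\CCC, \II_\CCC)$ follows provided I can show (i) the connecting map $\Hom(\II_\CCC, \OO_W) \to \Hom(\II_\CCC, \OO_\CCC)$ is zero and (ii) $\Ext^1(\II_\CCC, \OO_W) = 0$.

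For (ii), I would use $T^3$-equivariant Serre duality on the Calabi-Yau threefold $W$: $\Ext^1(\II_\CCC, \OO_W) \cong \Ext^2(\OO_W, \II_\CCC)^\vee = H^2(\II_\CCC)^\vee$, and then the long exact sequence in cohomology of the ideal sequence, together with $H^2(\OO_W) = 0$ (which holds for this finite-type toric CY, or can be arranged) and $H^1(\OO_\CCC) = 0$ (from $\chi(\OO_\CCC) = 1$ and $H^0(\OO_\CCC) = \CC$ by stability), gives $H^2(\II_\CCC) = 0$. For (i), the point is that $\Hom(\II_\CCC, \OO_W)$ consists of maps that, when composed with $\OO_W \to \OO_\CCC$, land in the image; since $\OO_W$ has no torsion and $\II_\CCC$ is the full ideal, any such composite factors through $\OO_W \to \OO_\CCC$ in a way that I expect to vanish — more carefully, I would show $\Hom(\II_\CCC, \OO_W) \cong \Hom(\OO_W, \OO_W) = \CC$ (any map $\II_\CCC \to \OO_W$ extends uniquely since $\OO_W$ is reflexive and $\CCC$ has codimension $\geq 2$, so $\Hom(\II_\CCC, \OO_W) = \Hom(\OO_W, \OO_W)$ via the natural inclusion), and then this generator is exactly the inclusion $\II_\CCC \hookrightarrow \OO_W$, whose composite with $\OO_W \twoheadrightarrow \OO_\CCC$ is zero. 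Hence the map $\Hom(\II_\CCC, \OO_W) \to \Hom(\II_\CCC, \OO_\CCC)$ kills the generator and is therefore zero.

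Combining (i) and (ii) with exactness, the segment $0 \to \Hom(\II_\CCC, \OO_\CCC) \to \Ext^1(\II_\CCC, \II_\CCC) \to 0$ is exact, giving the isomorphism. The main obstacle I anticipate is part (i): verifying cleanly that $\Hom(\II_\CCC, \OO_W) \cong \Hom(\OO_W, \OO_W)$ and that the identification sends the generator to the ideal inclusion requires the codimension-$\geq 2$ / reflexivity argument (equivalently, that $\CCC$ has no embedded points contributing — but here $\CCC$ is pure one-dimensional in a smooth threefold, so codimension exactly $2$, and $\OO_W$ is locally free hence $S_2$, so $\Hom(\OO_W, \OO_W) \to \Hom(\II_\CCC, \OO_W)$ is an isomorphism by the standard extension argument across the codimension-$2$ locus). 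Once that identification is in hand the connecting map is visibly zero and the rest is bookkeeping with the long exact sequence. Together with Lemma~\ref{lemma:exthomforOc} this completes the proof of Lemma~\ref{lemma:icic}.
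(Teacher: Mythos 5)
Your overall strategy is the same as the paper's: apply $\Hom(\II_{\CCC},-)$ to the ideal sheaf sequence Eq.~(\ref{eq:standardidealseq}) and show that the map $\Hom(\II_{\CCC},\OO_{W})\to\Hom(\II_{\CCC},\OO_{\CCC})$ vanishes and that $\Ext^1(\II_{\CCC},\OO_{W})=0$. Your argument for the first point is essentially sound once you correct one false assertion: $W$ is a non-proper (finite type toric) threefold, so $\Hom(\OO_{W},\OO_{W})=H^0(\OO_{W})$ is the full coordinate ring $R$ of the affinization $\Waff$, not $\CC$. This does not hurt you, because your reflexivity/codimension-two argument still identifies $\Hom(\II_{\CCC},\OO_{W})$ with $R$ acting by multiplication, and multiplication by any global function carries $\II_{\CCC}$ into $\II_{\CCC}$, so the composite with $\OO_{W}\twoheadrightarrow\OO_{\CCC}$ vanishes for every element, not just for a single generator; this is exactly the content of the paper's Eq.~(\ref{eq:homicox}) and Eq.~(\ref{eq:icicisR}).

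The genuine gap is in your step (ii). You invoke Serre duality in the form $\Ext^1(\II_{\CCC},\OO_{W})\cong \Ext^2(\OO_{W},\II_{\CCC})^{\vee}=H^2(\II_{\CCC})^{\vee}$, but $W$ is not proper and neither $\II_{\CCC}$ nor $\OO_{W}$ has compact support, so this form of duality is not available; the groups involved need not even be finite dimensional (already $\Hom(\II_{\CCC},\OO_{W})\cong R$ is infinite dimensional), so a degree-by-degree dual-vector-space statement cannot hold. Note that Lemma~\ref{lemma:homologicalgeometry} only applies equivariant Serre duality when one of the two arguments is $\OO_{\CCC}$, which is compactly supported. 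The correct route to $\Ext^1(\II_{\CCC},\OO_{W})=0$ is to apply $\Hom(-,\OO_{W})$ to the same ideal sequence and squeeze: $\Ext^1(\OO_{W},\OO_{W})\to\Ext^1(\II_{\CCC},\OO_{W})\to\Ext^2(\OO_{\CCC},\OO_{W})$, where the last group vanishes by legitimate equivariant Serre duality against the compactly supported $\OO_{\CCC}$, and $\Ext^1(\OO_{W},\OO_{W})=H^1(W,\OO_{W})=0$ because $W\to\Waff$ is projective with vanishing higher direct images and $\Waff$ is affine. With that substitution your proof closes.
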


\begin{proof}
We start with the same exact sequence on $W$ as above:
\begin{equation}
0\to\II_{\CCC} \to\OO_{W} \to \OO_{\CCC} \to 0. 
\label{eq:standardidealseq2}
\end{equation}
This time we apply $\Hom(\,\bigcdot\, ,\OO_{W})$ to Eq.~(\ref{eq:standardidealseq2}) to get the long exact sequence:
\begin{align}
0\to &\Hom(\OO_{\CCC} ,\OO_{W})  \to\Hom(\OO_{W},\OO_{W}) \to \Hom(\II_{\CCC},\OO_{W}) \to  \nonumber\\
\to &\Ext^1(\OO_{\CCC} ,\OO_{W})  \to\Ext^1(\OO_{W},\OO_{W}) \to \Ext^1(\II_{\CCC},\OO_{W}) \to 
\label{eq:homintoOx} \\
\to &\Ext^2(\OO_{\CCC} ,\OO_{W}) \to \cdots\nonumber 
\end{align}

Applying Lemma~\ref{lemma:homologicalgeometry} to the long exact sequence Eq.~(\ref{eq:homintoOx}) yields two isomorphisms,
\begin{align}
\Hom(\OO_{W},\OO_{W}) &\cong \Hom(\II_{\CCC},\OO_{W}),\label{eq:homintoOx-A} \\
\Ext^1(\OO_{W},\OO_{W}) &\cong \Ext^1(\II_{\CCC},\OO_{W}). \label{eq:homintoOx-B}
\end{align}

Define the ring $R$ as
\[
R\coloneqq \Hom(\OO_{W},\OO_{W}) =H^0(\OO_{W}).
\]
Then Eq.~(\ref{eq:homintoOx-A}) gives
\begin{equation}
\Hom(\II_{\CCC},\OO_{W}) \cong R. \label{eq:homicox}
\end{equation}
The isomorphism $\Hom(\II_{\CCC},\OO_{W})\cong R$ identifies the function $f\in R$ with the homomorphism given by multiplication by $f$, 
\[\II_{\CCC}\xrightarrow{\bullet f}\OO_{W}.\]

Also let $\Waff$ be the affinization of $W$,
\[
\Waff \coloneqq \Spec R = \Spec H^0(\OO_{W}).
\]
In terms of the toric fans, the fan of $W$ is a refinement of that of $\Waff$, and
\[
W \xrightarrow{\pi} \Waff 
\] is a projective morphism.
Hence, 
\begin{align}
\Ext^1(\OO_{W},\OO_{W})  &= H^1(W,\OO_W) \nonumber \\
&= H^1(\Waff,\pi_*\OO_{W}) &\text{by vanishing of higher direct image sheaves}\nonumber \\
&= H^1(\Waff,\OO_{\Waff}) &\nonumber \\
&= 0  &\text{since $\Waff$ is affine}. \label{eq:h1oxvanish}
\end{align}

Using  Eq.~(\ref{eq:h1oxvanish}) in Eq.~(\ref{eq:homintoOx-B}), we get
\begin{equation}
\Ext^1(\II_{\CCC},\OO_{W}) = 0.
\label{eq:ext1icoxvanish}
\end{equation}

Finally, we apply $\Hom(\II_{\CCC},\,\bigcdot\, )$ to Eq.~(\ref{eq:standardidealseq2}) to get the long exact sequence:
\begin{align}
0\to &\Hom(\II_{\CCC}, \II_{\CCC}) \to\Hom(\II_{\CCC},\OO_{W}) \to \Hom(\II_{\CCC},\OO_{\CCC}) \to  \nonumber\\
\to &\Ext^1(\II_{\CCC}, \II_{\CCC}) \to\Ext^1(\II_{\CCC},\OO_{W}) \to \cdots \label{eq:homIcto} 
\end{align}
Using Eq.~(\ref{eq:homicox}), we have
\begin{equation*}
\Hom(\II_{\CCC}, \II_{\CCC})\hookrightarrow  R\cong \Hom(\II_{\CCC},\OO_{W}).
\end{equation*}
But we also have $R\subset\Hom(\II_{\CCC}, \II_{\CCC})$ since any $f\in R$ gives a homomorphism $\II_{\CCC}\xrightarrow{\bullet f}\II_{\CCC}$. So we have,
\begin{equation}
\Hom(\II_{\CCC}, \II_{\CCC})\cong R.
\label{eq:icicisR}
\end{equation}

Now using Eq.~(\ref{eq:homicox}), Eq.(\ref{eq:icicisR}), and Eq.(\ref{eq:ext1icoxvanish}) in the long exact sequence Eq.(\ref{eq:homIcto}), we conclude that
\[
\Hom(\II_{\CCC},\OO_{\CCC}) \cong \Ext^1(\II_{\CCC}, \II_{\CCC})
\]
\end{proof}

\begin{proof}[Proof (of Lemma~\ref{lemma:icic}).] Follows immediately from Lemma~\ref{lemma:exthomforOc} and Lemma~\ref{lemma:exthomforIc}.
\end{proof}



\end{document}